\makeatletter \renewenvironment{proof}[1][\proofname]{
	\par\pushQED{\qed}\normalfont
	\topsep6\p@\@plus6\p@\relax
	\trivlist\item[\hskip\labelsep\bfseries#1\@addpunct{.}]
	\ignorespaces}{
	\popQED\endtrivlist\@endpefalse} \makeatother
\setlist[enumerate,1]{label=(\alph*)}
\setlist[enumerate,2]{label=(\roman*), ref=(\alph{enumi}.\roman*)}
\newlist{parlist}{enumerate}{1}
\setlist[parlist]{leftmargin=0cm, itemindent=2\parindent, label=(\alph*), itemsep=0.5em}
\newcommand\litem[1]{\item{\emph{(#1)}\enspace}} 
\newcommand{\bP}{\mathbb{P}}
\newcommand{\bZ}{\mathbb{Z}}
\newcommand{\bA}{\mathbb{A}}
\newcommand{\bC}{\mathbb{C}}
\newcommand{\cA}{{\mathcal{A}}}
\newcommand{\cD}{{\mathcal{D}}}
\newcommand{\cE}{{\mathcal{E}}}
\newcommand{\cF}{{\mathcal{F}}}
\newcommand{\cL}{{\mathcal{L}}}
\newcommand{\cN}{{\mathcal{N}}}
\newcommand{\cO}{{\mathcal{O}}}
\newcommand{\cS}{{\mathcal{S}}}
\newcommand{\fs}{\mathfrak{s}}
\newcommand{\kk}{\mathscr{k}}
\newcommand{\redd}{_{\textnormal{red}}} 
\newcommand{\gal}{\mathop{\textrm{Gal}}\nolimits}
\newcommand{\aff}{\mathop{\textrm{aff}}\nolimits} 
\newcommand{\sat}{\mathop{\textrm{sat}}\nolimits} 
\newcommand{\schsat}{\mathop{\textrm{sch-sat}}\nolimits} 
\newcommand{\sch}{\mathop{\textrm{sch}}\nolimits} 
\newcommand{\Hom}{\mathop{\textrm{Hom}}\nolimits}
\newcommand{\Pic}{\operatorname{Pic}}
\newcommand{\Ext}{\mathop{\operatorname{Ext}}\nolimits}
\newcommand{\Coh}{\mathop{\textrm{Coh}}\nolimits}
\newcommand{\Refll}{\mathop{\textrm{Ref}}\nolimits}
\newcommand{\Id}{\operatorname{Id}}
\newcommand{\Spec}{\mathop{\textrm{Spec}}\nolimits}
\newcommand{\Alb}{\operatorname{Alb}}
\newcommand{\Dalg}{\mathop{\mathcal{D}_{\textrm{alg}}}\nolimits}
\newcommand{\wt}[1]{\widetilde{#1}}
\newcommand{\wh}[1]{\widehat{#1}}
\newcommand{\ol}[1]{\overline{#1}}
\newcommand{\git}{/\!\!/}
\newcommand{\citestacks}[1]{\cite[\href{https://stacks.math.columbia.edu/tag/#1}{Tag #1}]{stacks-project}} 
\renewcommand{\leq}{\leqslant}
\renewcommand{\geq}{\geqslant}
\renewcommand{\to}{\longrightarrow}
\newcommand{\map}{\dashrightarrow}
\newcommand{\into}{\lhook\joinrel\longrightarrow}
\newcommand{\de}{\coloneqq} 
\newcommand{\cha}{\operatorname{char}}
\newtheorem{LEM}{Lemma}[section]
\newtheorem*{THM*}{Theorem}
\newaliascnt{THM}{LEM}
\newtheorem{THM}[THM]{Theorem}
\newaliascnt{PROP}{LEM}
\newtheorem{PROP}[PROP]{Proposition}
\newaliascnt{COR}{LEM}
\newtheorem{COR}[COR]{Corollary}
\newaliascnt{CON}{LEM}
\newtheorem{THMA}{Theorem}
\theoremstyle{definition}
\newaliascnt{EXM}{LEM}
\newtheorem{EXM}[EXM]{Example}
\newaliascnt{REM}{LEM}
\newtheorem{REM}[REM]{Remark}
\newaliascnt{DEF}{LEM}
\newtheorem{DEF}[DEF]{Definition}
\begin{document}

\title{Saturation of algebraic surfaces}

\author{Agnieszka Bodzenta}
\author{Tomasz Pe{\l}ka}
\author{Dario Wei{\ss}mann}

\address{Agnieszka Bodzenta: \newline
	Institute of Mathematics, 
	University of Warsaw, Banacha 2, 02-097 Warsaw, Poland.} \email{A.Bodzenta@mimuw.edu.pl} 
\address{Tomasz Pe{\l}ka: \newline
	Institute of Mathematics, 
	Jagiellonian University, Łojasiewicza 6, 30-348 Kraków, Poland.} \email{tomasz.pelka@uj.edu.pl}
\address{Dario Wei{\ss}mann: \newline
	Institute of Mathematics, 
	Polish Academy of Sciences, Śniadeckich 8, 00-656 Warsaw, Poland.} \email{dweissmann@impan.pl}

\subjclass[2020]{Primary 14C20; Secondary 14A20, 14F08, 18G80}


\begin{abstract}
	The saturation of an algebraic surface is
	the maximal open embedding with complement of dimension zero.
    	For schemes, it was introduced by the first named author
    	and A.~Bondal,  \cite{BodBon5},
    	who asked whether every saturated surface $X$
	is proper over its affinisation $\Spec H^{0}(X,\cO_{X})$.
    	We prove that this property holds whenever the affinisation is non-trivial.
    	If a saturated surface $X$ has trivial affinisation,
	we prove that the boundary of any compactification of $X$
	has at most two connected components, and this bound is optimal.
	Furthermore, we extend the results of \cite{BodBon5}
	from schemes to algebraic spaces; in particular, we prove that the saturation
	of a surface $X$ can be recovered from the category of reflexive sheaves on $X$.
\end{abstract}
\maketitle
\setcounter{tocdepth}{1}
\tableofcontents

\section{Introduction}

In this article we address the question of characterising saturated surfaces,
i.e. those to which one cannot \enquote{add more closed points}.
More precisely, we say that a surface $X$ is \emph{saturated}
if any open embedding $X\into  Y$ whose image has complement
of dimension zero is an isomorphism.

In the category of schemes, saturated surfaces were first introduced
in the article \cite{BodBon5} by the first named author and A.\ Bondal.
It was proved in \cite[Theorem 2.12]{BodBon5} that a surface $X$
is saturated if the affinisation morphism
$X\to X^{\aff}\de \Spec H^0(X, \cO_X)$ is proper (see Section~\ref{ssec:def_of_aff}).
The converse is false e.g.\ if $X$ is the schematic locus
of a proper algebraic space which is not a scheme, see \cite[Example 2.13]{BodBon5}.
This article is motivated by a natural question raised by A.\ Bondal:
 does the converse hold when, instead of schemes, one considers algebraic spaces?
In this article, we answer this question (Theorem \ref{thm:intro_sat_implies_proper})
and develop the theory of saturation for algebraic spaces in parallel
to the one developed for schemes in \cite[\S 2]{BodBon5}.
\smallskip

We work over an algebraically closed field $\kk$.
By a \emph{surface} we mean an irreducible, separated, normal \emph{algebraic space}
of finite type and dimension two over $\Spec\kk$.
An open embedding of surfaces is \emph{big}
if its image has complement of dimension zero.
We say that a surface $X$ is \emph{saturated} (Definition \ref{def:saturation})
if every big open embedding $X\into Y$ is an isomorphism.
To avoid the risk of confusion with the terminology of \cite{BodBon5}
concerning schematic surfaces (i.e., surfaces which are schemes),
we say that a schematic surface $X$ is \emph{scheme-saturated}
if every big open embedding of schematic surfaces $X\into Y$ is an isomorphism.

We give the following answer to the question of A.\ Bondal.

\begin{THMA}[Theorem \ref{thm:proper-surjective}]
\label{thm:intro_sat_implies_proper}
	Let $X$ be a surface such that $\dim X^{\aff}>0$.
	Then $X$ is saturated if and only if the affinisation morphism
	$X\to X^{\aff}$ is proper.
\end{THMA}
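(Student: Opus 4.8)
The first implication is \cref{prop:proper-implies-sat} (and needs no assumption on $\dim X^{\aff}$). For the converse, assume $X$ is saturated with $\dim X^{\aff}>0$; the plan is to deduce that the affinisation morphism $a\colon X\to X^{\aff}$ is proper. Write $A=H^{0}(X,\cO_{X})$, so $X^{\aff}=\Spec A$; since $X$ is normal, $A$ is integrally closed, and $X^{\aff}$ is affine of dimension one or two and of finite type over $\kk$. Because $X$ is quasi-compact and $X^{\aff}$ separated, $a$ is of finite type, and it is separated because $X$ is; I also record the identity $a_{*}\cO_{X}=\cO_{X^{\aff}}$, which follows from $\cO_{X}(X_{f})=A_{f}$ for $f\in A$ (valid as $X$ is quasi-compact and quasi-separated), where $X_{f}$ is the non-vanishing locus of $f$. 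The overall strategy: if $a$ were not proper, I would construct a big open embedding of $X$ into a strictly larger normal surface, contradicting saturatedness.

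First I would fix a \emph{relative} compactification of $a$: Nagata compactification for algebraic spaces gives a proper $\pi\colon\ol{X}\to X^{\aff}$ with $j\colon X\hookrightarrow\ol{X}$ a dense open subspace, and after normalising $\ol{X}$ (an isomorphism over $X$) I may assume $\ol{X}$ is normal, hence $\ol{X}\in\Dalg$. As $\ol{X}$ is reduced with $X$ dense, $\cO_{\ol{X}}\hookrightarrow j_{*}\cO_{X}$, so $\cO_{X^{\aff}}\hookrightarrow\pi_{*}\cO_{\ol{X}}\hookrightarrow\pi_{*}j_{*}\cO_{X}=a_{*}\cO_{X}=\cO_{X^{\aff}}$ and thus $\pi_{*}\cO_{\ol{X}}=\cO_{X^{\aff}}$; in particular $\pi$ has connected fibres and $\pi^{-1}(W)$ is connected for every connected open $W\subseteq X^{\aff}$. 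Put $D\de\ol{X}\setminus X$. If $D$ had an isolated point $p$, then $X\into\ol{X}\setminus(D\setminus\{p\})$ would be a big open embedding, forcing $p\in X$ by saturatedness --- impossible; so $D$ is pure of dimension one. Next I would confine $D$ to finitely many fibres of $\pi$ over closed points. By flat base change the generic fibre $X_{\eta}$ of $a$ is an integral curve (or point) over $\operatorname{Frac}(A)$ with $H^{0}(X_{\eta},\cO)=\operatorname{Frac}(A)$, hence proper over $\operatorname{Frac}(A)$; by spreading out, $a$ is proper over a dense open $U\subseteq X^{\aff}$, whence $a^{-1}(U)=\pi^{-1}(U)$ (an open, proper subspace of the connected $\pi^{-1}(U)$) and $D\subseteq\pi^{-1}(X^{\aff}\setminus U)$. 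If $\dim X^{\aff}=1$ then $X^{\aff}\setminus U$ is finite. If $\dim X^{\aff}=2$ then $\pi$ is birational (its generic fibre is a reduced point, since $\pi_{*}\cO_{\ol{X}}=\cO_{X^{\aff}}$); outside finitely many closed points $\pi$ is quasi-finite, hence finite, hence --- being birational with normal target --- an isomorphism over a cofinite open $V\subseteq X^{\aff}$; and $D$ cannot meet $\pi^{-1}(V)$, for otherwise it would contain a curve $C$ mapping isomorphically to $V$, giving $a^{-1}(V)=V\setminus C$ and $a_{*}\cO_{X}|_{V}\ne\cO_{V}$. So in either case $D\subseteq\pi^{-1}(\{q_{1},\dots,q_{m}\})$ with the $q_{i}$ closed.

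The heart of the argument is then to contract $D$. I claim every connected component of $D$ has negative definite intersection matrix on $\ol{X}$ (in the sense of Mumford's intersection theory on the normal surface $\ol{X}$). When $\dim X^{\aff}=2$ the fibres $\pi^{-1}(q_{i})$ are the exceptional fibres of the birational morphism $\pi$ and so are negative definite, hence so are all subcurves. When $\dim X^{\aff}=1$ the fibre $F_{i}=\pi^{-1}(q_{i})$ satisfies $(\pi^{*}q_{i})\cdot F_{ij}=0$ for every component $F_{ij}$ of $F_{i}$, so it is negative semidefinite with one-dimensional radical spanned by the positive multiplicity vector; hence any \emph{proper} subcurve of $F_{i}$ is negative definite, and one has $D\cap F_{i}\subsetneq F_{i}$ because $D\supseteq F_{i}$ would give $A=H^{0}(X,\cO_{X})\supseteq H^{0}(\ol{X}\setminus F_{i},\cO_{\ol{X}})=H^{0}(X^{\aff}\setminus\{q_{i}\},\cO_{X^{\aff}})\supsetneq A$. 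Granting this, Artin's contractibility criterion provides a proper morphism $c\colon\ol{X}\to\ol{Y}$ onto a normal surface $\ol{Y}\in\Dalg$ that is an isomorphism over $X=\ol{X}\setminus D$ and has $c(D)$ finite. Then $X\into\ol{Y}$ is a big open embedding, so by saturatedness it is an isomorphism; hence $c(D)=\varnothing$, i.e.\ $D=\varnothing$, i.e.\ $X=\ol{X}$ and $a=\pi$ is proper.

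I expect the contraction step to be the main obstacle. One must know that Artin's criterion produces $\ol{Y}$ as a (separated, finite type) algebraic space --- it is precisely the passage from schemes to algebraic spaces that makes this work, in contrast with the counterexample of \cite{BodBon5} --- and, above all, one must secure the negativity hypothesis. This is exactly where $\dim X^{\aff}>0$ is used: it forces $D$ into finitely many fibres over closed points, and in the fibred case it also forces those fibres to be met by $X$, so that only \emph{proper} subcurves of fibres occur. For $\dim X^{\aff}=0$ none of this is available, which is why that case genuinely fails and is treated separately in \cref{thm:intro-compactification}.
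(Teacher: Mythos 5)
Your argument is correct in outline, but it follows a genuinely different route from the paper's. The paper compactifies \emph{absolutely}: it reduces via Proposition \ref{prop:resolution} to $X=\ol{X}\setminus D$ with $\ol{X}$ smooth projective, invokes the saturation criterion (Proposition \ref{prop:saturation-criterion}: no negative definite connected component of $D$), and then treats the two cases by external inputs --- Schr\"oer's almost-affineness criterion when $\dim X^{\aff}=2$, and a nef-and-big argument plus Zariski's lemma to force $D$ into fibres when $\dim X^{\aff}=1$, concluding properness by base change. You instead compactify \emph{relatively} over $X^{\aff}$ (Nagata over $\Spec H^{0}(X,\cO_X)$), use $\pi_{*}\cO_{\ol{X}}=\cO_{X^{\aff}}$ and spreading out of the proper generic fibre to confine $D$ to finitely many closed fibres, prove negative definiteness there directly (Mumford negativity in the birational case, Zariski's lemma for proper subcurves of a fibre in the fibred case), and then contract $D$ by Artin's criterion in the category of algebraic spaces to contradict saturatedness. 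In effect you re-prove the relevant half of Proposition \ref{prop:saturation-criterion} in a relative setting rather than quoting it; the hypothesis $\dim X^{\aff}>0$ enters for you through the confinement of $D$ to fibres and through the exclusion $D\cap F_i\subsetneq F_i$, exactly parallel to where the paper uses it. What each buys: the paper's route factors through statements (the boundary criterion, Schr\"oer's result) that are reused for Theorem \ref{thm:intro-compactification}, while your relative compactification makes horizontal boundary components disappear for free and handles both dimensions almost uniformly, at the cost of the spreading-out step and one function-theoretic input. The key place where algebraic spaces are needed --- Artin contraction, Lemma \ref{lem:intersection-product}\ref{item:D-Artin} --- is the same in both proofs.

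Two steps you assert without proof should be filled in. First, in the case $\dim X^{\aff}=2$, the claim $a_{*}\cO_{X}|_{V}\neq\cO_{V}$ amounts to: for a normal affine surface $\Spec A$ and a curve $Z\subsetneq\Spec A$, one has $\cO(\Spec A\setminus Z)\supsetneq A$. This is true but not formal; e.g.\ compactify so that the boundary supports an ample Cartier divisor $H$, and use Serre global generation of $\cO(\ol{Z})\otimes\cO(mH)$ at the generic point of $\ol{Z}$ to produce a function with a genuine pole along $Z$ and no other poles inside the affine chart. Second, the assertion that the generic fibre is proper because it is an integral curve over $\operatorname{Frac}(A)$ with $H^{0}=\operatorname{Frac}(A)$ needs the small normalisation argument (a non-proper integral curve over a field has infinite-dimensional ring of functions), and the subsequent spreading out of properness is a standard limit argument for algebraic spaces; both are routine but should be cited or written out.
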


We emphasise that Theorem \ref{thm:intro_sat_implies_proper}
requires passing to the category of algebraic spaces.
Indeed, in Section \ref{ssec:examples-scheme-sat} we give examples of smooth
surfaces $X$ with $\dim X^{\aff}=0,1,2$ which are scheme-saturated,
but are non-proper over their affinisations.
Nonetheless, a weaker property that $X\to X^{\aff}$
is surjective holds for scheme-saturated surfaces, too (Proposition \ref{prop:surjective-2}).

Theorem \ref{thm:intro_sat_implies_proper} motivates further study
of surfaces for which the answer to A.\ Bondal's question is negative,
i.e.\ non-proper, saturated surfaces $X$ with $\dim X^{\aff}=0$.
Examples \ref{ex:ruled}, \ref{ex:Serre}, and \ref{ex:Hironaka}\ref{item:Hironaka-0}
show that such surfaces do exist
(in fact, they can be constructed in any class of birational equivalence,
see Proposition \ref{prop:SA-minimal}\ref{item:SA-birational-converse}),
at least if $\kk$ is not an algebraic closure of a finite field.
We describe them in terms of the boundaries of their compactifications, as follows.

Let $X$ be a surface.
Let $\ol{X}$ be a proper surface containing $X$
as an open subspace and consider $D\de \ol{X}\setminus X$.
Clearly, if $X$ is saturated, then $D$ cannot contain isolated points
nor connected components which can be contracted to points by a proper birational morphism.
In Proposition~\ref{prop:saturation-criterion}, we prove that the converse holds, too.
Hence by Artin's criterion (see Lemma~\ref{lem:intersection-product}\ref{item:D-Artin}),
$X$ is saturated if and only if $D$
is a Weil divisor whose connected components are not negative definite.

Assume $X$ is saturated.
If $D$ is not negative semi-definite,
then $\dim X^{\aff}=2$ by Proposition~\ref{prop:numerics}\ref{item:numerics-2}:
in fact, $D$ contains the support of a big and nef divisor,
cf.\ \cite[Proposition 3.2]{Schroer}. Assume $\dim X^{\aff}<2$.
Then every connected component $D_0$ of $D$ is negative semi-definite,
but not negative definite. We say that such $D_0$ is \emph{of fibre type},
because it has numerical properties of a fibre of some fibration of $\ol{X}$,
see Lemma \ref{lem:fibre-type}.
It is easy to see that $D_0$ supports a fibre
if and only if $\dim X^{\aff}=1$,
in which case the fibration restricts to $X\to X^{\aff}$,
see Proposition \ref{prop:numerics}\ref{item:numerics-1}.

Therefore, we have $\dim X^{\aff}=0$
(i.e.\ the answer to A.\ Bondal's question is negative)
if and only if the connected components of $D$ are \emph{false fibres}, i.e.,
they are of fibre type, but do not support fibres.
Examples \ref{ex:Serre} and \ref{ex:Hironaka} show that the cases
$\dim X^{\aff}=1,0$ cannot be distinguished by numerical properties of $D$.
Nonetheless, we point out a sharp difference between them.

\begin{THMA}[Corollary \ref{cor:two-false-fibres}]\label{thm:intro-two-false-fibres}
	Let $X$ be a saturated surface whose affinisation $X^{\aff}$ is a point.
	Let $\ol{X}$ be a proper surface containing $X$ as an open subspace,
	and let $D\de \ol{X}\setminus X$.
	Then $D$ has at most two connected components.
\end{THMA}

Of course, if $\dim X^{\aff}=1$,
then $D$ can have arbitrarily many connected components
(simply remove from $X$ more fibres of $X\to X^{\aff}$,
they are proper by Theorem \ref{thm:intro_sat_implies_proper}).

Example \ref{ex:ruled}\ref{item:ex-ruled-2} shows that the number $2$
in Theorem \ref{thm:intro-two-false-fibres} is optimal:
there do exist saturated surfaces with trivial affinisation
and two connected components of the boundary.
However, in Proposition \ref{prop:two-false-fibres} we show that
this case is strongly restricted:
for instance, if $D$ has two smooth connected components $D_0$, $D_1$,
then by Proposition \ref{prop:two-false-fibres}\ref{item:false-fibre-genus-irregularity}
the irregularity $q(\ol{X})$ cannot vanish,
the genus of $D_i$ is at least $q(\ol{X})$,
and its normal bundle is non-torsion (Remark \ref{rem:normal-bundle}).
On the other hand, if $D$ is connected,
then it can happen that $q(\ol{X})=0$ (Example \ref{ex:Hironaka})
or $D$ has trivial normal bundle (Example \ref{ex:Serre}).
As a side result, in Section \ref{ssec:SA-min} we reformulate
Theorem \ref{thm:intro-two-false-fibres} as a geometric characterization
of non-proper saturated surfaces with $\dim X^{\aff}=0$ (or $1$).

The question whether a divisor $D$ of fibre type
is a fibre of a fibration $\ol{X}\to B$ was raised by Mumford
in \cite[p.\ 339]{Mumford-Enriques-1},
and studied by Sakai \cite{Sakai-D-dimension} for $D$ of genus $1$,
see Remark \ref{rem:Sakai}.
By Theorem \ref{thm:intro-two-false-fibres},
$D$ is a fibre if and only if $\ol{X}\setminus D$
contains two disjoint divisors of fibre type,
or in other words, a \enquote{candidate} for a pullback of an ample divisor on $B$.
From this point of view, Theorem \ref{thm:intro-two-false-fibres}
is similar to \cite[Theorem 3.3]{Schroer},
which asserts that a negative definite divisor $D$
is a fibre of a proper birational morphism $\ol{X}\to \ol{Y}$
of schematic surfaces if and only if $\ol{X}\setminus D$
contains an effective \enquote{candidate} for a pullback
of an ample divisor on $\ol{Y}$, see Remark \ref{rem:Schroer}.

\smallskip

Our second goal is to prove that saturation of algebraic spaces
behaves at least as well as scheme-saturation developed in \cite{BodBon5}.
To state our results, we introduce further terminology.

The \emph{saturation} of a surface $X$ is a big open embedding
$\eta_{X}\colon X\into X^{\sat}$ into a saturated surface
(Definition \ref{def:saturation}):
intuitively speaking, it adds all the closed points that can be added to $X$.
Similarly, \emph{scheme-saturation} of a schematic surface $X$
is a big open embedding into a scheme-saturated surface $X\into X^{\schsat}$.
These two notions are related: for any surface $X$
the scheme-saturation of the schematic locus of $X$
is the schematic locus of the saturation $X^{\sat}$
(Proposition \ref{prop:sat_and_sch_commute}).
In particular, the scheme-saturation $X^{\schsat}$ of a schematic surface $X$
is a big open subspace of $X^{\sat}$,
but they may not be equal (as shown by examples in Section \ref{ssec:examples-scheme-sat}).

In Section \ref{sec:saturation}, we check that
basic categorical properties of scheme-saturation
proved in \cite[\S\S 2.3, 2.4]{BodBon5} carry over to saturation.
For every surface $X$ its saturation $\eta_{X}\colon X\into X^{\sat}$
exists (Corollary \ref{cor:saturation-exists})
and satisfies the following universal property
(Proposition \ref{prop:univ-property}):
for every big open embedding $i\colon X\into Y$
there is a unique big open embedding $j\colon Y\into X^{\sat}$
such that $j\circ i=\eta_X$.
In particular, the saturation is unique up to a unique isomorphism
(Corollary \ref{cor:sat_unique}).
In fact, $\eta_{X}$ is the adjunction unit $X \to \mathfrak{s}_* \mathfrak{s}X$,
where $\mathfrak{s}$ 
is the canonical functor from the category 
of surfaces and open embeddings to its localisation in all morphisms,
and $\mathfrak{s}_{*}$ is its left adjoint (Theorem \ref{thm:adjoint}).
The proof of these results follows the steps of \cite{BodBon5}
except for the construction of saturation, which cannot be adapted,
see Section \ref{ssec:sat-construction} for details.
 
Next, we prove that if $X\to Y$ is a proper surjective morphism,
then $X$ is saturated if and only if $Y$ is;
in fact saturation is functorial with respect to proper surjective morphisms
(Theorem \ref{thm:functoriality}).
As shown by Example \ref{ex:covers},
this property fails for scheme-saturation.

Finally, we prove the analogue of the main theorem of \cite{BodBon5}:

\begin{THMA}\label{thm:intro_sat_from_Ref}
	Let $X$ be a surface.
	Then the saturation $\eta_{X}\colon X\into X^{\sat}$
	can be recovered from the category of reflexive sheaves on $X$,
	as described in Theorem \ref{thm:sat_from_Ref}.
\end{THMA} 

In fact, Theorem \ref{thm:intro_sat_from_Ref} improves \cite{BodBon5},
as it recovers not only the surface $X^{\sat}$, but also the morphism $\eta_{X}$.
Its proof relies on the results of
\cite{BodBon5} combined with the methods of \cite{CalPir}.

\subsection*{Structure of the paper.}

In the preliminary Section \ref{sec:gluing}
we recall some general properties of algebraic spaces.
In Section \ref{ssec:pushout} we show how to glue surfaces
along big open subspaces, thus proving that the category $\Dalg$
of surfaces and big open embeddings admits pushouts (Lemma~\ref{lem:push-outs_in_Dalg}).
In Section \ref{ssec:contractions} we recall 
a standard fact about gluing birational contractions.

In Section \ref{sec:saturation} we formally introduce saturation of surfaces
(Definition \ref{def:saturation}) and prove its basic properties.
In Section \ref{ssec:sat-construction} we prove that the saturation exists
and can be constructed from a compactification as described above,
see Proposition \ref{prop:saturation-criterion}.
In Section \ref{ssec:saturation_univ-property}
we prove its universal property in the category $\Dalg$.

In Section \ref{ssec:adjunction},
we prove that all morphisms in $\Dalg$ can be inverted,
i.e., we can consider the localisation $\wh{\Dalg} = \Dalg[\Hom^{-1}]$
together with the canonical functor $\fs\colon \Dalg \to \wh{\Dalg}$.
We show in Theorem \ref{thm:adjoint} that $\fs$ admits the left adjoint $\fs_*$
and that the saturation is the $\fs_* \dashv \fs$ adjunction unit 
(hence the notation $\eta_X \colon X \into X^{\sat}$). 

In Section \ref{ssec:schematic-locus} we relate scheme-saturation with saturation
via $(X^{\sat})^{\sch}=(X^{\sch})^{\schsat}$ (Proposition \ref{prop:sat_and_sch_commute}). 
Next, in Section \ref{ssec:functoriality}
we prove that saturation is functorial 
under proper surjective morphisms. 
Using the Stein factorization, we reduce this statement to the cases 
of finite and proper birational morphisms, settled in 
the Propositions \ref{prop:covers} and 
\ref{prop:functoriality-birational}, respectively. 
We note that the latter holds for the scheme-saturation, too, 
but for the former it is necessary to consider algebraic spaces (Example \ref{ex:covers}).

Finally, in Section \ref{ssec:sat_from_ref} 
we describe how the saturation $\eta_X\colon X\into X^{\sat}$
is determined by $X$ and the category of reflexive sheaves on $X$,
see Theorem~\ref{thm:intro_sat_from_Ref}. 

Theorem \ref{thm:intro_sat_implies_proper} is proved in Section \ref{sec:affinisation}.
In Section \ref{ssec:def_of_aff} we recall basic properties of the affinisation.
In Section \ref{ssec:intersection form}
we reduce the problem to the smooth setting using Chow's lemma, 
resolution of surface singularities,
and functoriality of saturation with respect to proper birational morphisms. 
Theorem \ref{thm:intro_sat_implies_proper} is proved in Section \ref{ssec:prop_of_aff}.
Lastly, in Section~\ref{ssec:surj_of_aff}
we prove that for scheme-saturated surfaces the affinisation morphism is always surjective,
even if it is not proper (Proposition \ref{prop:surjective-2}).

In Section \ref{sec:sat_open_sub} we prove Theorem \ref{thm:intro-two-false-fibres}
and related results announced above.
In Section~\ref{ssec:num_boundary} we recall some properties 
of divisors of fibre type, and describe boundaries of
saturated surfaces (Proposition \ref{prop:numerics}). 
In Section \ref{ssec:false-fibres} we prove
Theorem \ref{thm:intro-two-false-fibres} and compare it with 
some results in the literature (Remarks \ref{rem:Mumford}, \ref{rem:Sakai}, \ref{rem:Schroer}). 
Finally, in Section~\ref{ssec:SA-min} we deduce 
some characterisations of saturated surfaces $X$ 
with $\dim X^{\aff}=1$ (Proposition~\ref{prop:dimXaff=1})
and $\dim X^{\aff}=0$ (Theorem~\ref{thm:dimXaff=0}). 

We conclude our discussion with a list of examples in Section \ref{sec:exampl}.
Examples in Section \ref{ssec:examples-dimXaff-0} show that the assumption 
$\dim X^{\aff}>0$ in Theorem \ref{thm:intro_sat_implies_proper}
is necessary for a saturated surface $X$
to be proper over its affinisation.
Example \ref{ex:Hironaka} in Section \ref{ssec:examples-numerics}
shows that numerical properties of the boundary divisor
cannot distinguish between $\dim X^{\aff} =1$ and $\dim X^{\aff} =0$.
In Section \ref{ssec:examples-scheme-sat}
we construct scheme-saturated surfaces $X$
which are not proper over $X^{\aff}$ 
for all possibilities $\dim X^{\aff}\in \{0,1,2\}$,
thus showing that Theorem \ref{thm:intro_sat_implies_proper}
does not hold for scheme-saturated surfaces.

\subsection*{Acknowledgement}
The first named author was  partially supported
by Polish National Centre (NCN) contract number 2021/41/B/ST1/03741. 

\subsection*{Notation} 
We work over an algebraically closed field $\kk$. 
A \emph{curve} is a separated integral scheme 
of dimension one and of finite type over $\kk$.
A \emph{surface} is an irreducible, normal, 
separated algebraic space of dimension two and of finite type over $\kk$. 
A \emph{schematic surface} is a surface which is also a scheme.
By a \emph{divisor} on a surface we always mean a Weil divisor,
and we often identify reduced divisors with their supports. 

An open embedding of surfaces is \emph{big}
if its image has codimension at least two. 
We denote by $\Dalg$ (resp.\ $\cD$) the category of surfaces 
(resp.\ schematic surfaces) and big open embeddings.
For a surface $X$, we denote by $X^{\sch}$ its schematic locus,
and by $\eta_{X}\colon X\to X^{\sat}$ and $\alpha_{X}\colon X\to X^{\aff}$
the saturation and affinisation morphisms,
see Definition \ref{def:saturation}
and Section \ref{ssec:def_of_aff}, respectively.

\section{Gluing surfaces}\label{sec:gluing}

To prove the existence of saturation,
we need to \enquote{glue} to $X$ the missing points of its saturated model.
With this in mind, we gather some useful lemmas concerning gluing of algebraic spaces.
The main result of this section is Lemma \ref{lem:push-outs_in_Dalg},
which allows to glue surfaces along big open subspaces. 
It is a consequence of the corresponding result for schemes 
\cite[Proposition 2.3]{BodBon5} and known properties of algebraic spaces
and their covers, which we recall now.

We say that an open subspace $U\subset X$ of an algebraic space is \emph{big}
if the complement $X\setminus U$ has codimension at least two in $X$.
We say that $j \colon U \into X$ is a \emph{big open embedding}
if it is an open embedding with big image.
For an algebraic space $X$ we denote by $X^{\sch}$ the \emph{schematic locus} of $X$, 
i.e., the set of geometric points in $X$ that admit an affine open neighbourhood,
see \citestacks{03JG} for details.
By \citestacks{0ADD}, $X^{\sch}\subseteq X$ is a big open subspace.
If $X$ is normal, then the smooth locus of $X$ 
is a big open subspace of $X$, too.

\subsection{Covers}\label{ssec:covers}

A morphism $Y\to X$ of normal separated irreducible 
algebraic spaces is a \emph{cover} if it is
finite, surjective, and generically \'etale. 
A cover $Y\to X$ is \emph{Galois} if the function field extension
$\kappa(Y)/\kappa(X)$ is Galois,
where $\kappa(X)$ is the field of rational functions, 
see \citestacks{0END}.
We refer to the group $\gal(\kappa(Y)/\kappa(X))$ 
as the \emph{Galois group} of the cover $Y\to X$. 

\begin{LEM}[Galois covers]
	\label{lem:lifting-Galois}
	Let $Y\to X$ be a Galois cover of surfaces with Galois group $G$.
	\begin{enumerate}
		\item\label{item:G-acts}
			The group $G$ acts on $Y$ over $X$
			and every automorphism of $Y$ over $X$ arises this way.
		\item\label{item:G-quotient} 
			The surface $X$ is the coarse quotient 
			$Y\git G$ of $Y$ by $G$,
			in the sense that it is universal among
			$G$-equivariant morphisms $Y\to Z$ to a space $Z$
			with the trivial $G$-action and geometric points of $X$
			correspond to $G$-orbits of geometric points of $Y$. 
	\end{enumerate}
\end{LEM}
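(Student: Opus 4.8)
The plan is to reduce both statements to the analogous facts for normal schemes via étale-local considerations. For part (a), first I would recall that $G = \gal(\kappa(Y)/\kappa(X))$ acts on the function field $\kappa(Y)$ fixing $\kappa(X)$; since $Y$ and $X$ are normal and $Y\to X$ is finite, $Y$ is the normalisation of $X$ in $\kappa(Y)$, hence by the universal property of normalisation each $\sigma\in G$ extends uniquely to an automorphism $\tilde\sigma\colon Y\to Y$ over $X$. This gives a $G$-action; the uniqueness of the extension shows it is a group action and that $\sigma\mapsto\tilde\sigma$ is injective. Conversely, any $X$-automorphism $\varphi$ of $Y$ induces an element of $\gal(\kappa(Y)/\kappa(X))$ by functoriality on generic points, and these two constructions are mutually inverse, again by uniqueness of extensions to the normalisation. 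All of this is insensitive to whether $Y$ is a scheme or an algebraic space, because normalisation and its universal property are available for (normal, Nagata) algebraic spaces — I would cite the relevant Stacks tags on normalisation of algebraic spaces.

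For part (b), I would proceed in two steps. First, the universal property: given a $G$-equivariant morphism $f\colon Y\to Z$ with $Z$ carrying the trivial action, I must factor it uniquely through $X$. Uniqueness is clear since $Y\to X$ is an epimorphism (it is surjective, even an fppf cover). For existence, I would work étale-locally on $X$: choose an étale presentation, or more simply cover $X$ by affine schemes $U=\Spec A$; then $Y\times_X U\to U$ is a finite surjective morphism of affine schemes, $Y\times_X U=\Spec B$ with $B$ the normalisation of $A$ in $\kappa(Y)$ and $A = B^G$ by standard invariant theory for Galois extensions of normal domains (Galois descent: $A\to B$ faithfully flat after a suitable localisation is not needed — one uses that $\kappa(X)=\kappa(Y)^G$ and $A$ integrally closed to get $A=B\cap\kappa(X)=B^G$). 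Hence $U$ is the categorical quotient $(Y\times_X U)/G$ in schemes, so the restriction of $f$ factors through $U$; these local factorisations glue by uniqueness. Second, the statement about geometric points: a geometric point $\Spec\Omega\to X$ lifts to $Y$ because $Y\to X$ is surjective, and the fibre $Y_x$ is a finite $G$-set on which $G$ acts transitively — this is again checked on an affine chart, where it is the classical fact that the Galois group acts transitively on the primes of $B$ over a given prime of $A=B^G$. Thus geometric points of $X$ are in bijection with $G$-orbits of geometric points of $Y$.

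The main obstacle I anticipate is not any single hard step but ensuring that the passage ``affine scheme charts $\Rightarrow$ algebraic space'' is clean: one must check that the local categorical-quotient property glues to a genuine categorical quotient in the category of algebraic spaces, and that ``trivial $G$-action on $Z$'' together with $G$-equivariance of $f$ really does force the factorisation compatibly on overlaps. I would phrase this carefully using that big open embeddings and the separatedness of $X$ make the gluing data unique. A secondary point to be careful about is that $X$ need not admit a global affine cover in the strong sense required, but an étale cover by affines suffices, and descent along it is harmless since everything in sight (normalisation, invariants, fibres) commutes with flat base change. I would remark that, since all constructions involved are étale-local on $X$ and agree with the scheme-theoretic picture, the lemma is really a formal consequence of the schematic case recalled in, e.g., \citestacks{0BRA} combined with the theory of algebraic spaces; this keeps the proof short.
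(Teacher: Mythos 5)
Your part (a) is the paper's argument verbatim: $Y$ is the normalisation of $X$ in $\kappa(Y)$, and the universal property of normalisation for algebraic spaces gives the $G$-action and the identification of all $X$-automorphisms. For part (b), however, you take a genuinely different route. The paper does not verify the coarse-quotient property by hand: it invokes Keel--Mori to produce the coarse moduli space $\wt{X}$ of $[Y/G]$ (which carries the universal property and the orbits-equal-points statement by construction), observes that $\wt{X}$ is normal, separated, and that $\wt{X}\to X$ is finite, surjective and birational because the action is generically free, and concludes $\wt{X}\cong X$ by Zariski's Main Theorem. You instead re-derive the coarse-quotient property directly: \'etale-locally the quotient is $\Spec$ of the invariant ring, and then one glues. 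This buys independence from the general Keel--Mori machinery (only the classical finite-group case is really needed), at the cost of redoing descent, and it comes with three soft spots you should repair. First, $X$ need not admit a Zariski cover by affine schemes --- the non-schematic points are exactly the delicate ones --- and after an \'etale base change $U\to X$ the space $Y\times_X U$ is no longer the normalisation of $U$ in $\kappa(Y)$, so your argument "$A=B\cap\kappa(X)=B^G$" does not apply as stated; the correct replacement is that formation of $G$-invariants commutes with flat base change together with the global observation that $(\pi_*\cO_Y)^G$ is a coherent $\cO_X$-algebra, finite and birational over $\cO_X$ inside $\kappa(X)$, hence equal to $\cO_X$ by normality --- which is in effect the paper's ZMT step. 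Second, your parenthetical that $Y\to X$ is an fppf cover is false in general (finite surjective and generically \'etale, but not flat when $X$ is singular); uniqueness of the factorisation should instead be argued from generic \'etaleness, density, reducedness of $X$ and separatedness of the target, in the style the paper uses elsewhere. Third, you still must know that the affine invariant-ring quotient is a categorical quotient in the category of algebraic spaces (not merely schemes), with fibres equal to $G$-orbits, before gluing; that is precisely the classical finite-group quotient theorem one ends up citing, so the net saving over simply quoting Keel--Mori, as the paper does, is modest.
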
 

\begin{proof}
	\ref{item:G-acts} 
	Since $Y\to X$ is Galois,
	$Y$ is the integral closure of $X$ in $\kappa(Y)$,
	hence \ref{item:G-acts} follows from 
	the universal property of normalization \citestacks{0823}. 
	
	\ref{item:G-quotient}
	The result of Keel--Mori \cite{keel-mori}
	shows that there exists a coarse moduli space
	of $[Y/G]$, call it $\wt{X}$.
	Since $Y$ is a normal and separated algebraic space, so is $\wt{X}$.
	The action of $G$ on $Y$ is generically	free.
	We conclude that $\kappa(\wt{X})=\kappa(X)$,
	so the natural morphism $\wt{X}\to X$ is birational. 
	Since $Y\to X$ and $Y\to \tilde{X}$ are finite and surjective,
	so is $\wt{X}\to X$. 
	Thus, $\wt{X}\to X$ is a finite birational morphism
	of normal separated algebraic spaces, hence 
	an isomorphism by Zariski's Main Theorem \citestacks{082K}.  
\end{proof}

\begin{LEM}[Big open embeddings lift to covers]
	\label{lem:G-lifting}
	Let $Y\to X$ be a finite morphism of surfaces. 
	Let $X\into X'$ be a big open embedding.
	Let $Y'$ be the integral closure of $X'$ in $\kappa(Y)$,
	so we have a commutative diagram
	\begin{equation*}
		\begin{tikzcd}
			Y \ar[d] \ar[r] & Y' \ar[d]\\
			X \ar[r,hook] & X'.
		\end{tikzcd}
	\end{equation*}
	Then $Y'\to X'$ is finite, and $Y\to Y'$ is a big open embedding.
	
    	Moreover, if $Y\to X$ is a Galois cover,
	then $Y'\to X'$ is a Galois cover 
    	with the same Galois group $G$,
	and $Y\into Y'$ is $G$-equivariant.
\end{LEM}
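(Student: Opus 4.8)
The plan is to prove the three assertions in order: that $Y \to Y'$ is an open embedding, that its image is big, and then the Galois statement. First I would record the basic structural facts: since $X$ is normal, $X'$ is normal (being a big open subspace of... no — $X \subset X'$ is open, so $X'$ need not be normal a priori). Actually $X'$ is a surface by hypothesis, hence normal, so its integral closure $Y'$ in the finite field extension $\kappa(Y)/\kappa(X')$ makes sense and is a normal algebraic space, finite over $X'$ by finiteness of normalization for excellent schemes/spaces (cf.\ \citestacks{0BXR} or the Keel--Mori/normalization machinery, using that $X'$ is of finite type over $\kk$). In particular $Y' \to X'$ is finite surjective, $\kappa(Y') = \kappa(Y)$, and $Y$ is the integral closure of $X$ in $\kappa(Y)$ by uniqueness of normalization; since $X = X' \times_{X'} X$ is the preimage of the open $X \subset X'$, normalization commutes with this open restriction, giving $Y = Y' \times_{X'} X$ as an open subspace of $Y'$. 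This establishes that $Y \into Y'$ is an open embedding.

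Next, for bigness: the complement $Y' \setminus Y$ is exactly the preimage under the finite morphism $\pi'\colon Y' \to X'$ of $Z := X' \setminus X$, which has codimension $\geq 2$ in $X'$, i.e.\ $\dim Z = 0$ since $X'$ is a surface. A finite morphism has relative dimension zero, so $\dim \pi'^{-1}(Z) = 0$ as well, whence $Y' \setminus Y$ has codimension $\geq 2$ in the surface $Y'$. (One should note $Y'$ is irreducible of dimension two: it is finite over the two-dimensional irreducible $X'$ and dominates it, with irreducible function field $\kappa(Y)$, so it is a surface.) Thus $Y \into Y'$ is a big open embedding.

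For the Galois part, suppose $Y \to X$ is Galois with group $G = \gal(\kappa(Y)/\kappa(X))$. Since $\kappa(X') = \kappa(X)$ and $\kappa(Y') = \kappa(Y)$, the extension $\kappa(Y')/\kappa(X')$ is the same Galois extension, so $Y' \to X'$ is a Galois cover with the same group $G$ by definition. By Lemma \ref{lem:lifting-Galois}\ref{item:G-acts}, $G$ acts on $Y'$ over $X'$ via the universal property of normalization. The same universal property applied to $Y$ over $X$ shows $G$ acts on $Y$ over $X$, and since $Y = Y' \times_{X'} X$ is $G$-stable (it is the preimage of the $G$-invariant open $X \subset X'$, the $G$-action on $X'$ being trivial), the $G$-action on $Y'$ restricts to the $G$-action on $Y$; hence $Y \into Y'$ is $G$-equivariant. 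I expect the main point requiring care is the compatibility of normalization with open immersions together with verifying $Y'$ is genuinely a surface (irreducibility and dimension two); the bigness estimate via finiteness of $\pi'$ and the Galois compatibility are then formal.
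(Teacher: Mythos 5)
Your proposal is correct and follows essentially the same route as the paper: finiteness of the relative normalization, compatibility of integral closure with the open immersion (giving $Y = Y'\times_{X'}X$), the observation that the preimage of the zero-dimensional set $X'\setminus X$ under a finite morphism is zero-dimensional, and the Galois statement from $\kappa(Y')=\kappa(Y)$, $\kappa(X')=\kappa(X)$. The only (harmless) variation is in the equivariance step, where you use the $G$-action coming from the universal property of normalization and the $G$-stability of $Y\subset Y'$, while the paper invokes the coarse-quotient description $X=Y/G$, $X'=Y'/G$; both arguments are fine.
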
	
\begin{proof} 
	The natural morphism $Y'\to X'$ from the integral closure 
    	is finite by \citestacks{0BB0}.
    	By \citestacks{0ABP} we have $Y=X \times_{X'} Y'$,
    	so $Y\to Y'$ is an open embedding.
    	The complement $Y'\setminus Y$ is the preimage of the finite set
    	$X'\setminus X$ by a finite morphism,
    	hence $Y'\setminus Y$ is finite,
    	i.e., $Y\into Y'$ is a big open embedding. 
	
	Assume that $Y\to X$ is a Galois cover.
	We have $\kappa(Y')=\kappa(Y)$ and $\kappa(X')=\kappa(X)$,
    	so $Y'\to X'$ is a Galois cover with the same Galois group $G$.
   	By Lemma \ref{lem:lifting-Galois}\ref{item:G-quotient} we have that
	$X$ is the quotient $X=Y\git G$ 
	and $X'=Y'\git G$, so $Y\into Y'$ is $G$-equivariant.     
\end{proof}

\begin{LEM}[{Algebraic spaces as quotients of schemes, see \cite[Corollaire 16.6.2]{champs-algebrique}}]
	\label{lem:Galois}
    	Let $X$ be a surface.
    	Then there exists a schematic surface $Y$
    	and a Galois cover $Y\to X$ with Galois group $G$.
    	As a consequence, $X$ is the coarse moduli space
	of the quotient stack $[Y/G]$.
\end{LEM}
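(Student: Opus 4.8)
The statement to prove is Lemma~\ref{lem:Galois}: every surface $X$ admits a finite Galois cover $Y\to X$ with $Y$ a schematic surface. The plan is to first produce \emph{some} scheme mapping finitely and surjectively onto $X$, then enlarge it to a Galois cover, check that the total space of the Galois cover is still a scheme, and finally normalise to land in the category of surfaces. Throughout, the key input is the cited result \cite[Corollaire 16.6.2]{champs-algebrique} (or, alternatively, \citestacks{0ADV} / the theory of the schematic locus): since $X$ is a quasi-separated, locally separated algebraic space of finite type over $\kk$, there is a scheme $U$ together with a finite surjective morphism $U\to X$. One should remark that this is the place where the hypotheses on $X$ (separated, finite type, and in particular the existence of a dense schematic locus) are used.

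First I would reduce to a generically étale situation. Replacing $U$ by its reduction we may assume $U$ is reduced; replacing it by an irreducible component dominating $X$ (which exists since $U\to X$ is surjective and $X$ is irreducible) we may assume $U$ is integral, so $U\to X$ induces a finite extension of function fields $\kappa(U)/\kappa(X)$. Over the algebraically closed field $\kk$ we are free to pass to a further finite cover: let $L$ be a finite Galois closure of $\kappa(U)/\kappa(X)$ inside a fixed algebraic closure, set $G=\gal(L/\kappa(X))$, and let $Y$ be the normalisation of $X$ in $L$. By \citestacks{0BB0} the morphism $Y\to X$ is finite and surjective, and by construction $\kappa(Y)=L$ is Galois over $\kappa(X)$, so $Y\to X$ is a Galois cover with group $G$ in the sense fixed just before Lemma~\ref{lem:lifting-Galois}. (Generic étaleness is automatic since $\kk$ is algebraically closed, hence perfect, so $L/\kappa(X)$ is separable.) Also $Y$ is normal and, being finite over the irreducible $X$ with irreducible function field, it is a surface.

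The main point that remains — and the step I expect to be the principal obstacle — is showing that $Y$ is a \emph{scheme}. Here the idea is that $Y$ is finite over $U$ as well: indeed $Y$ is the normalisation of $U$ in $L$ (normalising $X$ in $L$ factors through the normalisation of $X$ in $\kappa(U)$, which maps finitely onto $U$ after possibly replacing $U$ by its normalisation — a scheme, since normalisation of a scheme is a scheme), and a scheme finite over an affine scheme is affine, hence a finite morphism to a scheme has schematic source. Concretely: normalise $U$ to get an integral normal scheme $U'$ with $\kappa(U')=\kappa(U)\subseteq L$; then $Y$ is the normalisation of $U'$ in $L$, which is finite over $U'$ by \citestacks{0BB0}, and a finite morphism of algebraic spaces with schematic target has schematic source (\citestacks{0ADV}, or because finite morphisms are affine and affine-over-a-scheme is a scheme). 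Thus $Y$ is a schematic surface, $Y\to X$ is the desired finite Galois cover, and the final clause ``$X$ is the coarse moduli space of $[Y/G]$'' is exactly Lemma~\ref{lem:lifting-Galois}\ref{item:G-quotient}. I would flag that the only subtlety is keeping track of which morphisms one normalises along so that the comparison ``$Y=$ normalisation of the scheme $U'$ in $L$'' is legitimate; once that bookkeeping is in place, schematicity of $Y$ is immediate from finiteness over a scheme.
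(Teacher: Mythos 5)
Your overall route is essentially the paper's: produce a finite surjective morphism onto $X$ from a scheme, pass to an integral normal model, take the Galois closure of the function field extension and normalise $X$ (equivalently, the scheme) in it, observe that a finite morphism with schematic target has schematic source, and quote Lemma \ref{lem:lifting-Galois}\ref{item:G-quotient} for the coarse-quotient clause. The normalisation bookkeeping and the schematicity argument are fine (the paper streamlines this by simply taking the integral closure of the scheme $Y$ itself, which stays a scheme).

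There is, however, one genuine gap, visible in positive characteristic: you justify the existence of a Galois closure of $\kappa(U)/\kappa(X)$ by asserting that separability is ``automatic since $\kk$ is algebraically closed, hence perfect''. This is false: $\kappa(X)$ has transcendence degree $2$ over $\kk$ and is not perfect when $\cha\kk=p>0$ (e.g.\ $\kk(s^{1/p},t)/\kk(s,t)$ is finite and inseparable), and the weak covering statement you invoke only gives a finite surjective morphism $U\to X$ from a scheme, which need not be generically \'etale — composing any such $U\to X$ with a Frobenius of $U$ gives another one with inseparable function field extension. If $\kappa(U)/\kappa(X)$ is inseparable, there is no Galois closure and your construction stops; note also that you cannot repair this by passing to the maximal separable subextension, since the normalisation of $X$ in that subfield is merely the target of a finite surjection from a scheme, which (as this very paper illustrates) does not force it to be a scheme. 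The paper avoids the issue by citing \citestacks{0GUN}, which produces a finite surjective morphism from a scheme that is moreover \emph{generically \'etale}; separability then holds by construction and the rest of your argument goes through verbatim. In characteristic zero your proof is complete; in characteristic $p$ it needs this stronger input rather than the perfectness claim.
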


\begin{proof}
    	By \cite[\href{https://stacks.math.columbia.edu/tag/0GUN}{Tag 0GUN}]{stacks-project}  
	there is a finite surjective generically \'etale morphism $Y\to X$,
	where $Y$ is a scheme.
	Replacing $Y$ by the normalisation of an irreducible component 
	dominating $X$ equipped with the reduced subscheme structure,
	we may assume that $Y$ is normal and integral.
	Furthermore, the function field extension 
	$\kappa(Y)/\kappa(X)$ is separable
	since the morphism $Y\to X$ is generically \'etale.
	Replacing $Y$ by its integral
	closure in the Galois closure
	of 
	$\kappa(Y)/\kappa(X)$ we may 
	assume that $Y\to X$ is Galois.
	The remaining statement follows from 
	Lemma \ref{lem:lifting-Galois}\ref{item:G-quotient}.
\end{proof}

\subsection{Pushouts}\label{ssec:pushout} 
Let $\Dalg$ be the category of surfaces with big open embeddings, 
and $\cD$ its full subcategory of schematic surfaces.
We have the following extension of \cite[Proposition 2.3]{BodBon5}.

\begin{LEM}[Pushouts in $\Dalg$ and $\cD$]
	\label{lem:push-outs_in_Dalg}
	Consider big open embeddings 
	$X_0 \into X_1$, $X_0 \into X_2$ of surfaces.
	Then there exists a pushout $X = X_1 \sqcup_{X_0} X_2$
	in the category of separated algebraic spaces,
 	and the natural maps $X_1\to X$ and $X_2 \to X$ 
	are big open embeddings of surfaces. 
	Moreover, the surface $X$ is schematic if $X_0,X_1,X_2$ are schematic.

	In particular, the categories $\Dalg$ and $\cD$ admit pushouts.
\end{LEM}
\begin{proof}
	By Lemma \ref{lem:Galois}, the surfaces $X_i$ for $i=0,1,2$
	are quotients $X_i=Y_i/G_i$ of schematic surfaces $Y_i$ 
	by the Galois groups $G_i=\gal(\kappa(Y_i)/\kappa(X_i))$. 
	Let $L$ be a Galois extension of $\kappa(X_0)$ dominating
	all the Galois extensions $\kappa(Y_0),\kappa(Y_1),$ and $\kappa(Y_2)$,
	and let $G=\gal(L/\kappa(X_0))$ be its Galois group.
	If $X_i$ is already schematic we take $Y_i=X_i$ and $L=\kappa(X_i)$. 
	
	Replacing $Y_0,Y_1,Y_2$ by their integral closures in $L$,
	by Lemma \ref{lem:G-lifting} we obtain  
	$G$-equivariant big open embeddings $Y_{0}\into Y_{1},Y_2$
	of schematic surfaces which fit into commutative diagrams  
	\begin{equation*}\begin{tikzcd} 
		Y_0 \ar[r,hook] \ar[d] & Y_i \ar[d]\\
		X_0 \ar[r,hook] & X_i       
	\end{tikzcd}\end{equation*}
	for $i=1,2$. 
	By \cite[Proposition 2.3]{BodBon5},
	big open embeddings of schematic surfaces
	form a left multiplicative system,
	so there is a schematic surface $Y$ 
	and a commutative diagram
	\begin{equation*}
		\begin{tikzcd}
			Y _0 \ar[r,hook] \ar[d,hook] & Y_{1} \ar[d]\\
			Y_{2} \ar[r] & Y.
		\end{tikzcd}
	\end{equation*}
    	Moreover, loc.\ cit.\ constructs $Y$ so that it is 
    	the union of images of $Y_1$ and $Y_2$,
	whose intersection contains the image of $Y_0$.
    	In particular, $Y_{i}\to Y$ is a big open embedding for $i=0,1,2$.
    
    	We claim that $Y$ is the pushout $Y_1\sqcup_{Y_0} Y_2$
 	in the category of separated algebraic spaces.
    	Consider a pair of morphisms $f_i\colon Y_i \to Z$, $i=1,2$,
    	to a separated algebraic space $Z$ which agree on $Y_0$.
	Since $Z$ is separated and $Y_0$ is dense in $Y_1\cap Y_2$,
	the morphisms $f_1$ and $f_2$ agree on $Y_1 \cap Y_2$. 
	It follows that $f_1$ and $f_2$
	glue to a morphism $f\colon Y_{1}\cup Y_{2}=Y\to Z$,
	which, again, is unique because $Z$ is separated 
    	and $f$ is already defined on a dense open subspace $Y_0\subseteq Y_1\cap Y_2$
    	of the reduced scheme $Y$, see \citestacks{0EMR} or \cite[Corollary 9.9]{GorWed}.
   	This proves the claim. 
  
	The action of $G$ on $Y_i$ induces an action of $G$ on $Y$,
    	and the surface $X\de Y\git G$, which exists by \cite{keel-mori},
    	is the pushout $X_1 \sqcup_{X_0} X_2$
	in the category of separated algebraic spaces.
    	Furthermore, for $i=0,1,2$, the $G$-equivariant
	big open embedding $Y_i\into Y$
    	induces a big open embedding $X_i\into X$
	on the level of $G$-quotients, as needed. 
\end{proof}

\subsection{Contracting Weil divisors}\label{ssec:contractions}
We recall a standard definition, cf.\ \cite{Schroer}.

\begin{DEF}\label{def:contractible}
	Let $D$ be a proper Weil divisor on a (schematic) surface $X$.
	We say that $D$ is (\emph{schematically}) \emph{contractible}
	if there is a proper surjective morphism $X\to Y$ of (schematic)
	surfaces which is an isomorphism on $X\setminus D$
	and maps $D$ to a finite set. 
\end{DEF}
%
We remark that by Artin's criterion,
see Lemma \ref{lem:intersection-product}\ref{item:D-Artin},
$D$ is contractible if and only if it is negative definite.
Schematic contractibility is more subtle,
see \cite[Theorem 3.3]{Schroer} or Lemma \ref{lem:Hironaka}.
The following \enquote{gluing} argument is well known, cf.\ \cite[Lemma 3.1]{Schroer}.

\begin{LEM}\label{lem:patching}
	Let $D$ be a proper Weil divisor on a (schematic) surface $X$,
	and let $U$ be an open neighborhood of $D$ in $X$.
	Then $D$ is (schematically) contractible as a divisor on $X$
	if and only if it is (schematically) contractible as a divisor on $U$.
	In particular, $D$ is (schematically) contractible if and only if
	the same holds for every connected component of $D$.
\end{LEM}
\begin{proof}
	Assume that $D$ is (schematically) contractible as a divisor on $U$;
	and let $f\colon U\to V$ be the proper birational morphism
	as in Definition \ref{def:contractible}.
	Let $g\colon X\to Y\de (X\setminus D)\sqcup_{U\setminus D} V$
	be the proper morphism obtained by gluing $\Id_{X\setminus D}$
	with $f$ along $U\setminus D$.
	It is an isomorphism on $X\setminus D$ and maps $D$ to a finite set. 
	It remains to prove that $Y$ is separated.
	
	By the valuative criterion \citestacks{0ARI},
	we need to prove that for every discrete
	valuation ring $R$ with fraction field $K$,
	and every morphism $\eta_0\colon \Spec K\to Y$
	there is at most one lift $\eta\colon \Spec R\to Y$
	such that $\eta\circ\iota=\eta_0$,
	where $\iota\colon \Spec K\to  \Spec R$ is the canonical morphism.
	If $\eta_{0}$ maps $\Spec K$ to a closed point of $Y$,
	then $\eta$ must map $\Spec R$ to the same closed point,
	so we can assume that the image of $\eta_0$ is one-dimensional.
	Since $Y\setminus (X\setminus D)=f(D)$ has dimension zero,
	we get that $\eta_{0}$ factors through $X\setminus D$.
	Let $\eta_{1},\eta_{2}$ be two lifts of $\eta_0$.
	If they both factor through the same patch ($X\setminus D$ or $V$),
	then they are equal since both patches are separated.
	Hence we can assume that $\eta_{1}$ and $\eta_2$ factor through
	$X\setminus D$ and $V$, respectively.
	In particular, $\eta_{1}\colon \Spec R\to X\setminus D\into X$ is a lift of
	$\eta_{0}\colon \Spec K\to X\setminus D \into X$ to a morphism $\Spec R\to X$.
	Since $f\colon U\to V$ is proper,
	$\eta_{2}$ lifts to $\ol{\eta}_{2}\colon \Spec R\to U\into X$,
	which is another lift of $\eta_0\colon \Spec K\to X$ to $\Spec R\to X$.
	Since $X$ is separated, we conclude that $\eta_{1}=\ol{\eta}_{2}$.
	Hence $\ol{\eta}_{2}$ factors through $X\setminus D$,
	and the result follows since $X\setminus D$ is separated.
\end{proof}

\section{Saturation of surfaces}\label{sec:saturation}

\begin{DEF}[Saturation]\label{def:saturation}
	We say that a surface $X$ is \emph{saturated} if every
	big open embedding of surfaces $X\into Y$ is an isomorphism.
	A \emph{saturation} of $X$ is a big open embedding
	$X\into X^{\sat}$ into a saturated surface.
\end{DEF}

Similarly, we say that a schematic surface is
\emph{scheme-saturated} if every big open embedding $X\into Y$
of \emph{schematic} surfaces $X\into Y$ is an isomorphism;
and define \emph{scheme-saturation} $X\into X^{\schsat}$
as a big open embedding into a scheme-saturated surface.
This notion was introduced in \cite[\S 2]{BodBon5} as \emph{codim-2-saturation}.
Therefore, Definition \ref{def:saturation} is a direct analogue
of the one in \cite{BodBon5} to the category of algebraic spaces.

Clearly, a saturated schematic surface is scheme-saturated,
but not conversely, see the examples in Section \ref{ssec:examples-scheme-sat}.

\smallskip 

In this section, we prove that the saturation exists
(Corollary \ref{cor:saturation-exists}),
is unique (Corollary \ref{cor:sat_unique}),
and has categorical properties analogous
to those proved for scheme-saturation in \cite{BodBon5}
(Proposition \ref{prop:univ-property} and Theorem \ref{thm:adjoint}).
The latter are proved in a similar way as in \cite{BodBon5},
but the existence proof is different.
In Theorem \ref{thm:functoriality} we prove that saturation
behaves well with respect to proper surjective morphisms:
this has no counterpart in \cite{BodBon5},
as it fails for scheme-saturation (Example \ref{ex:covers}).
In Section \ref{ssec:sat_from_ref} we show how to reconstruct the saturation
$X\to X^{\sat}$ from the category of reflexive sheaves on $X$,
which improves the main result of \cite{BodBon5}.

\subsection{Construction of a saturation}
\label{ssec:sat-construction}
For a schematic surface $X$,
its scheme-saturation $X^{\schsat}$
was constructed in \cite[\S 2.3]{BodBon5}
as a colimit of affinisations of its quasi-affine open subschemes.
As such, this construction does not directly generalize to algebraic spaces.
However, \cite[Theorem 2.14]{BodBon5} shows that $X^{\schsat}$
can be constructed from a compactification $\ol{X}$ of $X$, as follows:
first contract all connected components of the boundary $\ol{X}\setminus X$
which are schematically contractible Weil divisors,
and then define $X^{\schsat}$ as $\ol{X}$
minus the divisorial part of the boundary.

In this section, we show that the latter approach
does generalize to algebraic spaces.
This way, in Proposition \ref{prop:saturation-criterion}
we give an independent proof of both \cite[Theorem 2.14]{BodBon5}
and its direct analogue for algebraic spaces,
and deduce the existence of saturation in Corollary \ref{cor:saturation-exists}.

\begin{PROP}
\label{prop:saturation-criterion}
	Let $X$ be an open subspace of a proper surface $\ol{X}$.
	Let $D=\ol{X}\setminus X$.
	Then $X$ is saturated if and only if $D$ is a Weil divisor
	with no contractible connected component.
	
	Moreover, if $\ol{X}$ is a scheme,
	then $X$ is scheme-saturated if and only if
	$D$ is a Weil divisor with 
	no schematically contractible connected component.
\end{PROP}
\begin{proof}
	We can assume that $D$ has no (schematically)
	contractible connected components:
	indeed, if $D$ has one, then we contract it,
	replace $\ol{X}$ by its image, and repeat if needed.
	After this reduction, we need to prove that
	$X$ is (scheme-)saturated if and only if $D$ has pure dimension~$1$.
	
	Assume that a point $x\in \ol{X}$ is a connected component of $D$.
	The big open embedding $X\into \ol{X}\setminus (D\setminus \{x\})$
	is not an isomorphism, hence $X$ is not saturated, as needed.
	
	Conversely, assume $D$ has pure dimension $1$.
	We need to prove that every big open embedding
	of (schematic) surfaces is an isomorphism.
	Suppose the contrary, and let $\iota\colon X\into Y$
	be an open embedding such that $Z\de Y\setminus \iota(X)$
	is a non-empty, finite set.
	Let $\Gamma\subseteq X\times Y$ be a graph of $\iota$,
	and let $\ol{\Gamma}\subseteq \ol{X}\times Y$ be the closure of $\Gamma$.
	The projections $p\colon \ol{\Gamma}\to \ol{X}$ and
	$q\colon \ol{\Gamma}\to Y$
	restrict to isomorphisms $p|_{\Gamma}\colon \Gamma \to X$,
	$q|_{\Gamma}\colon \Gamma\to Y\setminus Z=\iota(X)$.
	In particular, $p$ and $q$ are birational.
	
	We claim that $p\colon \ol{\Gamma}\to \ol{X}$ is quasi-finite,
	hence an open embedding by Zariski's Main Theorem \citestacks{082K}.
	Let $F\subseteq \ol{\Gamma}$ be a fibre of $p$.
	Since $p|_{\Gamma}\colon \Gamma \to X$ is an isomorphism,
	the intersection $F\cap \Gamma$ is at most a point.
	Its complement $F\setminus \Gamma$ is mapped by $q$ into the finite set~$Z$.
	Thus, both projections $p,q$ map $F$ to finite sets,
	so $F$ is finite, as claimed.

 	Because $q|_{\Gamma}\colon \Gamma \to Y\setminus Z$
	and $p|_{\Gamma}\colon \Gamma\to X$ are isomorphisms,
	we have $q^{-1}(Z)=\ol{\Gamma}\setminus \Gamma$
	and $D_0\de p(\ol{\Gamma}\setminus \Gamma)=p(\ol{\Gamma})\setminus X$.
	Since $p$ is an open embedding,
	$D_0$ is an open subspace of $\ol{X}\setminus X=D$.
 
 	As $\ol{X}$ is proper, so are the projection $\ol{X}\times Y\to Y$
	and its restriction $q\colon \ol{\Gamma}\to Y$.
	Hence, the preimage $q^{-1}(Z)$ of the finite set $Z$ is proper,
	so its image $D_0=p(q^{-1}(Z))$ is closed in $D$.
	Thus, $D_0$ is a sum of connected components of $D$.
	Then $q\circ p^{-1}\colon p(\ol{\Gamma})\to Y$ is a proper birational morphism
	mapping $D_0$ to the finite set $Z$, 
	so each connected component of $D_0$
	is (schematically) contractible by Lemma \ref{lem:patching}.
	But $D$ has no such connected components, a contradiction.
\end{proof}

\begin{COR}\label{cor:saturation-exists}
	Every surface has a saturation.
	Every schematic surface has a scheme-saturation.
\end{COR}
\begin{proof}
	Let $X$ be a (schematic) surface.
	By the Nagata compactification theorem
	(see \cite{Nagata_schemes} for schemes and
	\cite{Nagata_spaces} for algebraic spaces)
	there is an open embedding $X\into \ol{X}$ into an irreducible,
	proper separated algebraic space (resp.\ a scheme).
	Replacing $\ol{X}$ by its normalization,
	we can assume that $\ol{X}$ is normal.
	Let $D_0$ and $D_1$ be the sum of connected components of
	$\ol{X}\setminus X$ of dimension $0$ and $1$, respectively.
	We can assume that $D_1$ has no (schematically) contractible connected component,
	otherwise we contract it and replace $\ol{X}$ by its image.
	By Proposition~\ref{prop:saturation-criterion},
	the surface $\ol{X}\setminus D_{1}$ is (scheme-)saturated,
	so $X\into \ol{X}\setminus D_1$ is the required saturation.
\end{proof}

\subsection{Uniqueness and universal property}
\label{ssec:saturation_univ-property}

We now prove that the saturation is unique.
To do this, we characterise it by a universal property in the category
$\Dalg$ of surfaces and big open embeddings,
in the same way as \cite[Proposition 2.9]{BodBon5}
characterises scheme-saturation.
As in loc.\ cit., the proof is an easy consequence
of the existence of pushouts (Lemma \ref{lem:push-outs_in_Dalg}).

\begin{PROP}[Universal property of saturated surfaces]\label{prop:univ-property}
	A surface $X$ is saturated if and only if
	for any pair of big open embeddings $i_Y \in \Hom_{\Dalg}(Y,X)$,
	$j \in \Hom_{\Dalg}(Y,Y')$ there exists a unique
	$j'\in \Hom_{\Dalg}(Y', X)$ such that $i_Y = j' \circ j$,
	i.e.\ we have a commutative diagram
	\begin{equation*}\begin{tikzcd}
			Y' \ar[rr,dashed,hook,"\exists !\, j'"] && X. \\
			Y \ar[rru,hook, "i_Y"'] \ar[u,hook,"j"] &&
	\end{tikzcd}\end{equation*}
\end{PROP}
\begin{proof}
	Assume that $X$ satisfies the above universal property.
	To prove that $X$ is saturated,
	we need to prove that every big open embedding $j\in \Hom_{\Dalg}(X,Y')$
	is an isomorphism (Definition \ref{def:saturation}).
	The universal property applied to $Y=X$ and $i_{Y}=\Id_{X}$
	gives a unique $j'\in \Hom_{\Dalg}(Y',X)$
	such that $j' \circ j = \Id_X$.
 	As both $j$ and $j'$ are open embeddings,
	they are isomorphisms, as needed.
	
	Conversely, assume that $X$ is saturated.
	Consider a pair of morphisms $i_Y\in\Hom_{\Dalg}(Y,X)$
	and $j\in \Hom_{\Dalg}(Y,Y')$.
	By Lemma \ref{lem:push-outs_in_Dalg} there exists pushout
	$X'\de X\sqcup_Y Y'$ in $\Dalg$,
	so we have a commutative diagram of big open embeddings
	\begin{equation*}\begin{tikzcd}
			X \ar[r,"f",hook]
			& X' \\
			Y \ar[u,"i_Y",hook] \ar[r,"j",hook]
			& Y'. \ar[u,"g",hook]
	\end{tikzcd}\end{equation*}
	Since $X$ is saturated, $f\colon X\into X'$ is an isomorphism.
	Now $j':=f^{-1}\circ g$ satisfies the required property.
	Such $j'$ is unique as it agrees with $i_Y$
	on a dense open subspace $Y \subseteq Y'$.
\end{proof}

\begin{COR}[Uniqueness of saturation]\label{cor:sat_unique}
	For every surface $X$,
	the saturation $\eta_X \colon X \into X^{\sat}$ is unique,
	up to a unique isomorphism which restricts to the identity on $X$.
	
	The analogous uniqueness result holds for
	scheme-saturation of schematic surfaces.
\end{COR}
\begin{proof}
	Assume that $\wh{\eta} \colon X\into \wh{X}$
	and $\ol{\eta} \colon X\into \ol{X}$ are (scheme-)saturations of $X$.
	By Proposition~\ref{prop:univ-property}
	(or \cite[Proposition 2.9]{BodBon5} for scheme-saturations)
	there exist unique $\ol{j} \colon \wh{X} \into \ol{X}$
	and $\wh{j} \colon \ol{X} \into \wh{X}$ such that
	$\wh{j} \circ \ol{\eta} = \wh{\eta}$
	and $\ol{j} \circ \wh{\eta} = \ol{\eta}$.
	In particular, $\ol{j}$ and $\hat{j}$ restrict to the identity on $X$,
	and satisfy $(\ol{j} \circ \wh{j}) \circ \ol{\eta} = \ol{\eta}$.
	As the image of $\ol{\eta}$ is dense and $\ol{X}$ is separated,
	we conclude that the open embedding $\ol{j} \circ \wh{j}$ is the identity.
	An analogous argument shows that $\wh{j} \circ \ol{j} = \Id_{\wh{X}}$.
\end{proof}

\begin{REM}[In higher dimensions, saturation may not be unique]
	Definition \ref{def:saturation} can be posed
	in the category of normal, irreducible,
	separated algebraic spaces of finite type over $\kk$.
	We can say that $X$ is \emph{saturated}
	if every big open embedding $X\into Y$ in this category is an isomorphism,
	and define saturation as a big open embedding into a saturated space.
	Here \emph{big open} can either mean that the complement has codimension at most $2$;
	or has pure codimension~$2$.
	
	In any case, it is not clear if such a saturation exists,
	and even if it does, it may not be unique,
	in which case the universal property (Proposition \ref{prop:univ-property}) fails.
	The reason for this is the existence of small contractions,
	and subsequent failure of Lemma \ref{lem:push-outs_in_Dalg} on the existence of pushouts.
	
	For a concrete example consider the Atiyah flop, defined as follows,
	see e.g.\ \cite[Example 3-4-3]{Matsuki}.
	Let $Y$ be the quadric cone $\{xy=z_{1}z_{2}\}\subseteq \bA^{4}$,
	and let $X=Y\setminus \{0\}$ be its smooth locus.
	For $i=1,2$ let $X_{i}\to Y$ be the blowup at $D_i\de \{x=z_{i}=0\}$.
	Since $D_{i}\cap X$ is a Cartier divisor on $X$,
	these blowups restrict to isomorphisms over $X$.
	Hence we get open embeddings $X\into X_{i}$
	with complements of pure codimension $2$.
	Each threefold $X_{i}$ is proper over an affine scheme~$Y$,
	hence saturated (cf.\ Proposition \ref{prop:proper-implies-sat}).
	Thus, the embeddings $X\into X_{i}$ are saturations of $X$.
	However, there is no isomorphism $X_{1}\to X_{2}$
	which restricts to the identity on~$X$.
	Indeed, otherwise its composition with the blowup $X_{2}\to Y$
	would agree with the blowup $X_{1}\to Y$ on~$X$,
	hence on $Y$ as $Y$ is separated.
	But the inverse image of $D_{1}$ on $X_{1}$ is a divisor,
	while the inverse image of $D_1$ on $X_2$ has the exceptional $\bP^1$
	as its irreducible component, a contradiction.
	
	Note that Lemma \ref{lem:push-outs_in_Dalg} fails in this example, too.
	Indeed, suppose there is a separated algebraic space
	$Z\de X_1\sqcup_{X} X_2$
	with big open embeddings $X_{i}\into Z$ which agree on $X$.
	Since each $X_{i}$ is saturated,
	each $X_{i}\into Z$ must be an isomorphism,
	so composing them we get an isomorphism $X_{1}\to X_{2}$
	which is an identity on $X$,
	but we have shown above that this is impossible.
\end{REM}

\subsection{Saturation as the adjunction unit}\label{ssec:adjunction}

We now use the universal property (Proposition \ref{prop:univ-property})
to characterise of the saturation morphism $X\into X^{\sat}$
in categorical terms, as the adjunction unit for the localisation 
$\Dalg\to \wh{\Dalg}$ in the set of all morphisms (Theorem \ref{thm:adjoint}).
The analogous result for scheme-saturation
was proved in \cite[Theorem 2.10]{BodBon5}.
Like in the previous section,
our proof relies on the existence of pushouts 
(Lemma \ref{lem:push-outs_in_Dalg}) and formal arguments 
along the lines of \cite[\S 2.4]{BodBon5}.
Nonetheless, for the readers' convenience we present the argument in detail.

To define the localisation $\wh{\Dalg}$
we need the following analogue of \cite[Proposition 2.3]{BodBon5}.

\begin{PROP}\label{prop_LMS}
	All morphisms in $\Dalg$ form a left multiplicative system.
\end{PROP}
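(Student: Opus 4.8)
The plan is to verify the axioms of a left multiplicative system (left Ore conditions plus closure under composition) for the class of all morphisms in $\Dalg$, exploiting the structural results already established. Recall that a class $S$ of morphisms in a category is a left multiplicative system if: (LMS1) $S$ contains all identities and is closed under composition; (LMS2) for any $X_{0}\xrightarrow{s} X_{1}$ in $S$ and any $X_{0}\xrightarrow{f} X_{2}$, there is a commutative square with $X_{1}\xrightarrow{g} X_{3}$ and $X_{2}\xrightarrow{t} X_{3}$ with $t\in S$; and (LMS3) a cancellation property: if $f,g\colon X\to Y$ and $s\in S$ with $sf=sg$, then there is $t\in S$ with $ft=gt$. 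Since \emph{every} morphism of $\Dalg$ lies in $S$, axioms (LMS1) and (LMS3) are essentially trivial --- (LMS1) holds because identities are big open embeddings and a composition of big open embeddings is one, and (LMS3) holds because in $\Dalg$ any two parallel morphisms $f,g\colon X\to Y$ that agree after precomposition with a dominant map (in particular with any big open embedding $s$, or with $\Id_{X}$) already agree: all objects are separated and integral, so a morphism is determined by its restriction to a dense open, whence one may take $t=\Id_{X}$ and in fact $f=g$.

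Thus the only substantive point is (LMS2), the left Ore condition. Here I would simply invoke Lemma \ref{lem_push-outs_in_Dalg}: given big open embeddings $X_{0}\into X_{1}$ and $X_{0}\into X_{2}$, the pushout $X_{3}\de X_{1}\sqcup_{X_{0}}X_{2}$ exists in the category of separated algebraic spaces, is again a surface, and the natural maps $X_{1}\to X_{3}$, $X_{2}\to X_{3}$ are big open embeddings of surfaces. The pushout square is commutative by construction, so it furnishes exactly the square required by (LMS2), with $t\colon X_{2}\to X_{3}$ (indeed both legs) lying in $S=\mathrm{Mor}(\Dalg)$.

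The main --- and only --- obstacle is therefore already resolved by Lemma \ref{lem_push-outs_in_Dalg}; no further work is needed beyond recording that the pushout supplies the Ore square and observing that separatedness plus density of the common open forces uniqueness, so the cancellation axiom is automatic. I would close by remarking that, since all morphisms are inverted, the localisation $\wh{\Dalg}=\Dalg[\Hom^{-1}]$ is well-defined with a calculus of left fractions, which is what is needed in Section \ref{ssec:sat_adj}.
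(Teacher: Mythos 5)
Your proof is correct and follows essentially the same route as the paper: (LMS1) is immediate, (LMS2) is exactly the pushout Lemma \ref{lem_push-outs_in_Dalg}, and (LMS3) follows because a big open embedding has dense image and the targets are separated, so $f=g$ and the identity witnesses the condition. (Your displayed statement of (LMS3) reads like the right-hand version, but your actual argument — density of the image plus separatedness forcing $f=g$ — handles the left-hand condition the paper verifies, so nothing is missing.)
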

\begin{proof}
	We need to check conditions (LMS 1)--(LMS 3) from \citestacks{04VC}. 
	Condition (LMS 1) states that the given class of morphisms
	contains the identity morphisms and is closed under composition.
	As we consider all morphisms in $\Dalg$, it is clearly satisfied. 
	The second condition (LMS 2) states that for any 
	$j_1 \in \Hom_{\Dalg}(X_0, X_1)$ and $j_2 \in \Hom_{\Dalg}(X_0, X_2)$
	there exists $X\in \Dalg$ and $i_1\colon X_1 \to X$
	and $i_2 \colon X_2\to X$ such that $i_1\circ j_1 = i_2 \circ j_2$.
	This holds by the pushout Lemma \ref{lem:push-outs_in_Dalg}. 
	The last condition (LMS 3) states that for any morphisms 
	$f,g,j$ in $\Dalg$ such that $f\circ j=g\circ j$
	there exists a morphism $i$ in $\Dalg$ such that $i\circ f=i\circ g$.
	Since the image of $j$ is dense, the condition $f\circ j=g\circ j$
	implies that $f=g$, and we can take $i=\Id$.
\end{proof}

The objects of $\wh{\Dalg}$ are the objects of $\Dalg$,
while morphisms are equivalence classes of \emph{roofs}
\begin{equation*}\begin{tikzcd} [row sep=small]
	&Z&\\
	X\ar[ur,"f"] & & Y, \ar[ul,"s"']
\end{tikzcd}\end{equation*} 
where $f,s$ are morphisms in $\Dalg$,
see \citestacks{04VB} for details.
We denote such a roof by $s^{-1}f \colon X\to Y$.
We denote by $\mathfrak{s}\colon \Dalg \to \wh{\Dalg}$
the canonical functor which is the identity on objects and 
maps $f\in \Hom_{\Dalg}(X,Y)$ to the equivalence class of the roof
$\Id_Y^{-1}f\in \Hom_{\wh{\Dalg}}(X,Y)$.

By Corollary \ref{cor:saturation-exists}, 
every surface $X$ admits a saturation $\eta_{X}\colon X\into X^{\sat}$.
We define a functor $\mathfrak{s}_* \colon \wh{\Dalg} \to \Dalg$
mapping a surface $X$ to $X^{\sat}$,
and a roof $s^{-1}f \in \Hom_{\wh{\Dalg}}(X,Y)$
to a morphism $\mathfrak{s}_{*}(s^{-1}f)\in \Hom_{\Dalg}(X^{\sat},Y^{\sat})$
defined as follows.
By Proposition \ref{prop:univ-property},
we have a unique morphism $i\colon Z\to X^{\sat}$
such that $i \circ f = \eta_X$, and a unique morphism
$\mathfrak{s}_{*}(s^{-1}f)\colon X^{\sat}\to Y^{\sat}$
such that $\mathfrak{s}_{*}(s^{-1}f)\circ (i\circ s)\circ=\eta_{Y}$,
i.e.\ the following diagram commutes 
\begin{equation}\label{eqtn_s_on_Hom}
	\begin{tikzcd} [row sep=small]
		X^{\sat} \ar[rr,"\mathfrak{s}_*(s^{-1}f)"] & & Y^{\sat} \\
			& Z \ar[ul,"i"] & \\
			X \ar[ur,"f"'] \ar[uu,"\eta_X"] & & Y. \ar[uu,"\eta_Y"'] \ar[ul,"s"]
		\end{tikzcd} 	
\end{equation}
One checks that $\mathfrak{s}_*(s^{-1}f)$
depends only on the equivalence class of a roof $s^{-1}f$,
and $\mathfrak{s}_{*}$ respects the composition, cf.\ \cite[\S 2.4]{BodBon5}.
Hence, we get a functor $\mathfrak{s}_* \colon \wh{\Dalg} \to \Dalg$.

\begin{THM}[{cf.\ \cite[Theorem 2.10]{BodBon5}}]
\label{thm:adjoint}
	The functor $\mathfrak{s}_*$ is left adjoint to $\mathfrak{s}$.
	Moreover, for any surface $X\in \Dalg$, the saturation morphism 
	$\eta_X \colon X\to X^{\sat} =\mathfrak{s}_*\mathfrak{s}(X)$
	is the adjunction unit.
\end{THM}
\begin{proof}
	To show that $\mathfrak{s}_*$ is left adjoint to $\mathfrak{s}$,
    	we need to check that for any $X, Y \in \Dalg$, the map 
	\begin{equation*} 
		\alpha \colon \Hom_{\wh{\Dalg}}(X, Y) \to 
		\Hom_{\Dalg}(X, Y^{\sat}),\qquad \alpha \colon s^{-1}f 
			\mapsto \mathfrak{s}_{*}(s^{-1}f)\circ \eta_X,
		\end{equation*} 
	is a bijection.
    	Since $\alpha$ maps the identity on $X$ to $\eta_{X}$,
    	this claim implies that $\eta_{X}$ is the adjunction unit.
	In fact, we claim that the inverse of $\alpha$ is given by 
	\begin{equation*} 
			\beta\colon \Hom_{\Dalg}(X, Y^{\sat})
			\to \Hom_{\wh{\Dalg}}(X, Y),
			\qquad \beta\colon h \mapsto \eta_{Y}^{-1}h.
		\end{equation*}
	We have $\beta(\alpha(s^{-1}f)) = \eta_Y^{-1}(\mathfrak{s}_{*}(s^{-1}f) \circ \eta_X)$ 
    	which is equivalent to $s^{-1}f$.
    	Indeed, considering $u\de \mathfrak{s}_{*}(s^{-1}f) \circ \eta_X$,
    	$i\colon Z\to X^{\sat}$ as in \eqref{eqtn_s_on_Hom},
    	and $v\de \mathfrak{s}_{*}(s^{-1}f) \circ i$,
	diagram \eqref{eqtn_s_on_Hom}
	shows that $v\circ f=u$ and $v\circ s=\eta_{Y}$.
	The required equivalence $\eta_{Y}^{-1}u=s^{-1}f$
    	is witnessed by the commutative diagram
		\begin{equation*} 
		    \begin{tikzcd} 
			& Y^{\sat} \ar[d,"\Id" description] &\\
			X \ar[ur,"u" description]  \ar[r,"u" description] \ar[dr,"f" description] &
			Y^{\sat} & 
			Y \ar[ul,"\eta_Y" description]
			\ar[l,"\eta_Y" description]
			\ar[dl,"s" description] \\ & 
			Z.\ar[u,"v" description]&
		    \end{tikzcd} 
		\end{equation*} 
	In the other direction, 
    	$\alpha(\beta(h)) = \alpha(\eta_Y^{-1}h) =
    	\mathfrak{s}_{*}(\eta_{Y}^{-1}h)\circ \eta_{X}$.
	Diagram \eqref{eqtn_s_on_Hom} for the roof $\eta_{Y}^{-1}h$ is  
	\begin{equation*}
		\begin{tikzcd}[row sep=small]
			X^{\sat} \ar[rr,"\mathfrak{s}_*(\eta_Y^{-1}h)"] & & Y^{\sat} \\
			& Y^{\sat} \ar[ul,"i"] & \\
			X \ar[ur,"h"'] \ar[uu,"\eta_X"] & & Y, \ar[uu,"\eta_Y"'] \ar[ul,"\eta_Y"]
		\end{tikzcd} 	
	\end{equation*}
	We have $\mathfrak{s}_*(\eta_Y^{-1}h)\circ i=\Id_{Y^{\sat}}$
    	because these morphisms agree on a dense open subspace
	$Y\subseteq X^{\sat}$. Hence $\alpha(\beta(h)) =
    	\mathfrak{s}_{*}(\eta_{Y}^{-1}h)\circ \eta_{X} = 
	\mathfrak{s}_{*}(\eta_{Y}^{-1}h)\circ i\circ h=h$, as needed.
\end{proof}

\begin{PROP}[{cf.\ \cite[Proposition 2.11]{BodBon5}}] 
\label{prop:fully-faithful}
	The functor $\mathfrak{s}_*$ is fully faithful.	
\end{PROP}
\begin{proof}
	By \cite[Corollary 1.23]{Huy}, it suffices to check that the adjunction counit
	$\mathfrak{s}\mathfrak{s}_* \to \Id$ is an isomorphism when applied to any
	object in $\wh{\Dalg}$. This is clear as $\wh{\Dalg}$ is a groupoid.
\end{proof}

\subsection{Saturation of the schematic locus}
\label{ssec:schematic-locus} 
Recall that by Corollaries \ref{cor:saturation-exists} and \ref{cor:sat_unique},
every surface $X$ has a unique saturation $X\into X^{\sat}$;
and every schematic surface has a unique scheme-saturation
$X\into X^{\schsat}$. The following observation relates those two. 

\begin{PROP}
\label{prop:sat_and_sch_commute}
	For every surface $X$ we have a natural isomorphism 
	$(X^{\sat})^{\sch}\cong (X^{\sch})^{\schsat}$
	which restricts to the identity on $X^{\sch}$.
	In other words, the schematic locus of the saturation of $X$
	is the scheme-saturation of the schematic locus of $X$.
\end{PROP}
\begin{proof}
	By uniqueness of scheme-saturation (Corollary \ref{cor:sat_unique}),
	it is enough to check that the schematic locus  $(X^{\sat})^{\sch}$
	is scheme-saturated and contains $X^{\sch}$ as a big open subscheme.
	
	As every point of the schematic locus $X^{\sch}$
	admits an affine open neighbourhood, 
	the big open embedding 
	$X^{\sch} \into X \into X^{\sat}$
	factors through $(X^{\sat})^{\sch}$,
	so $(X^{\sat})^{\sch}$ contains $X^{\sch}$ as a big open subscheme. 

	To prove that $(X^{\sat})^{\sch}$ is scheme-saturated,
	we need to prove that every big open embedding 
	$j\colon (X^{\sat})^{\sch}\into Y$ of schematic surfaces is an isomorphism. 
	By Lemma \ref{lem:push-outs_in_Dalg}
	we have a surface $\ol{Y}\de X^{\sat}\sqcup_{(X^{\sat})^{\sch}} Y$
	and a commutative square of big open embeddings 
	\begin{equation*}
		\begin{tikzcd}
			X^{\sat} \ar[r,"\ol{j}"] & \ol{Y} \\
			(X^{\sat})^{\sch} \ar[r,"j"] \ar[u,"i"] & Y.
			\ar[u,"\ol{i}"] \ar[ul,"i'" description] \ar[l,"\wt{i}",bend left=30]
		\end{tikzcd} 
	\end{equation*}
	Since $X^{\sat}$ is saturated, 
	$\ol{j}\colon X^{\sat}\into Y$ is an isomorphism.
 	Hence $i'\de \bar{j}^{-1}\circ \bar{i}\colon Y\into X^{\sat}$
	is an open embedding.
	Since every point in $Y$ has an affine neighbourhood,
	$i'$ factors through $(X^{\sat})^{\sch}$, i.e.
	we have an open embedding  $\wt{i}\colon Y \into (X^{\sat})^{\sch}$
	such that $i' = i \circ \wt{i}$.
	Thus, $i\circ \wt{i}\circ j=i'\circ j=\bar{j}^{-1}\circ \bar{i}\circ j=i$.
	As $i$ is an embedding, we get that $\wt{i} \circ j$
	is the identity on $(X^{\sat})^{\sch}$.
	As both $\wt{i}$ and $j$ are open embeddings,
	we conclude that $j$ is an isomorphism, as needed.
\end{proof}

\subsection{Saturation is functorial for proper surjective morphisms}
\label{ssec:functoriality}

\begin{THM}
\label{thm:functoriality}
	Let $\pi\colon Y\to X$ be a proper surjective morphism of surfaces.
	Then 
	\begin{enumerate}
		\item\label{item:functoriality-iff}
		 	the surface $X$ is saturated if and only if $Y$ is saturated, and
		\item\label{item:functoriality-diagram}
			there is a unique proper surjective morphism 
			$\pi^{\sat}\colon Y^{\sat}\to X^{\sat}$ 
			such that the following diagram is commutative
			\begin{equation*}
				\begin{tikzcd}
					Y \ar[r,hook,"\eta_{Y}"] \ar[d,"\pi"] &
					Y^{\sat} \ar[d,"\pi^{\sat}"] \\
					X \ar[r,hook,"\eta_{X}"] & X^{\sat}.
				\end{tikzcd}
			\end{equation*}
	\end{enumerate}
\end{THM}

Applying Stein factorisation \citestacks{0A1B}
we see that it is enough to prove
Theorem \ref{thm:functoriality} in two cases:
when $\pi$ is birational and when $\pi$ is finite.
The birational case is easy and holds for scheme-saturation, too:
we settle it in Proposition \ref{prop:functoriality-birational} below.
We begin with the finite case,
which is more involved and fails for scheme-saturation,
as shown by Example \ref{ex:covers}.

\begin{PROP}
	\label{prop:covers}
	Theorem \ref{thm:functoriality} holds if
	the proper surjective morphism $\pi\colon Y\to X$ is finite.
\end{PROP}
\begin{proof}
	The uniqueness statement in \ref{item:functoriality-diagram}
	follows from the fact that
	that $Y$ is a dense open subspace of $Y^{\sat}$ and $X^{\sat}$ is separated.
	We show \ref{item:functoriality-iff}
	and the existence part of \ref{item:functoriality-diagram}.
	
	Factoring $Y\to X$ via the integral closure of $X$
	in the maximal separable subfield of $\kappa(Y)/\kappa(X)$,
	we see that it suffices to consider two cases:
	first, when $\kappa(Y)/\kappa(X)$ is purely inseparable,
	and second, when it is separable, i.e.\ $Y\to X$ is a cover
	(see Section \ref{ssec:covers}).
	
	We start with the purely inseparable case.
	Then some power $F^n_X$ of the absolute Frobenius $F_X:X\to X$
	dominates $Y\to X$, that is, we have a factorisation
	$F^n_X:X\to Y \to X$.
	Let $Y'$ (respectively, $X'$) be the integral closure of
	$X^{\sat}$ in $\kappa(Y)$
	(respectively, in the field extension $F^n_X:\kappa(X)\to\kappa(X)$).
	By Lemma \ref{lem:G-lifting},
	the morphisms $Y\into Y'$ and $X\into X'$ are big open embeddings.
	We remark that the absolute Frobenius $X\to X$
	is not a morphism over $\Spec \kk$, nonetheless,
	the proof of Lemma \ref{lem:G-lifting} applies in this case, too.
	
	By the universal property of the saturation (Proposition \ref{prop:univ-property})
	we obtain a big open embedding $X'\into (X')^{\sat}=X^{\sat}$.
	We have now constructed a commutative diagram
	\begin{equation*}
		\begin{tikzcd}
			X \ar[r,hook] \ar[d] \ar[dd,"F^{n}_{X}"', bend right=90] &
			X' \ar[d] \ar[r,"\eta_{X'}",hook] & X^{\sat} \ar[ldd,"F_{X^{\sat}}^{n}"] \\
			Y \ar[r,hook] \ar[d] & Y' \ar[d] & \\
			X  \ar[r,hook] & X^{\sat}, &
		\end{tikzcd}
	\end{equation*}
	where the horizontal morphisms are big open embeddings
	and the vertical ones are surjective.
	Since $X'\to X^{\sat}$ is surjective and
	$F_{X^{\sat}}^{n}\colon X^{\sat}\to X^{\sat}$ is a homeomorphism,
	we conclude that the open embedding $\eta_{X'}\colon X'\into X^{\sat}$
	is surjective, so it is an isomorphism.
	
	Let $X''$ be the integral closure of $Y^{\sat}=(Y')^{\sat}$
	in the function field extension $\kappa(X')/\kappa(Y)$.
	As before, Lemma \ref{lem:G-lifting} shows that $X'$
	is a big open subspace of $X''$.
	Since $X'=X^{\sat}$ is saturated, we get $X''=X'$.
	Further, the morphism $X''\to Y^{\sat}$ is surjective and
	we conclude that $Y'\into Y^{\sat}$
	is surjective as well, and thus $Y'=Y^{\sat}$.
	This proves \ref{item:functoriality-diagram}.

	To see part \ref{item:functoriality-iff},
	recall that $X'=X^{\sat}$ and $Y'=Y^{\sat}$,
	so if $X=X^{\sat}$ (resp.\ $Y=Y^{\sat}$),
	then the equality $Y=Y^{\sat}$ (resp.\ $X=X^{\sat}$)
	follows from the commutativity of the upper (resp.\ lower)
	square in the diagram above.
	
	We continue with the case where $Y\to X$ is a cover.
	First, we prove part \ref{item:functoriality-iff}.
	Replacing $Y\to X$ by the integral closure $Y''$ of $X$ in the Galois
	closure of $\kappa(Y)/\kappa(X)$,
	we can assume that the cover $Y\to X$ is Galois.
	Indeed, assume that \ref{item:functoriality-iff} holds for Galois covers.
	As $Y''\to X$ and $Y''\to Y$ are both Galois covers,
	we obtain that $X$ is saturated if and only if $Y''$ is saturated,
	which holds if and only if $Y$ is saturated, as needed.
	
	Let $G$ be the Galois group of $Y\to X$.
	By Lemma \ref{lem:lifting-Galois}\ref{item:G-acts},
	$G$ acts on $Y$ and $X=Y \git G$.
	
	Assume that $Y$ is saturated.
	Consider the integral closure $Y'$ of $X^{\sat}$ in $\kappa(Y)$.
	By Lemma \ref{lem:G-lifting},
	the induced morphism $Y\to Y'$ is a $G$-equivariant big open embedding.
	Since $Y$ is saturated, we have $Y=Y'$,
	hence $X=X^{\sat}$, as needed.
	
	Conversely, assume that $X$ is saturated.
	Since saturation is unique up to a unique isomorphism,
	see Corollary \ref{cor:sat_unique},
	the $G$-action on $Y$ extends to a $G$-action on $Y^{\sat}$.
	By Keel--Mori \cite{keel-mori} we can form the quotient
	$X':=Y^{\sat}\git G$ in the category of algebraic spaces.
	Since $Y^{\sat}$ is normal and separated, so is $X'$.

	By the universal property of the quotient $Y\to X$
	we obtain a natural morphism $X\to X'$ such that the diagram
	\begin{equation}
		\label{diagram:saturation}
		\begin{tikzcd}
			Y \ar[r,hook] \ar[d] & Y^{\sat}\ar[d]\\
			X \ar[r] & X'
		\end{tikzcd}
	\end{equation}
	commutes. We claim that $X\to X'$ is a big open embedding.
	As fibres of $Y^{\sat}\to X'$ correspond to $G$-orbits
	and $Y^{\sat}\setminus Y$ consists of finitely many points,
	we find that $X\to X'$ has finite fibres and that
	the image of $X$ misses only finitely many points in $X'$.
	Furthermore, $X\to X'$ is birational as
	$\kappa(X)=\kappa(Y)^{G}=\kappa(Y^{\sat})^{G}=\kappa(X')$.
	By Zariski's Main Theorem \citestacks{082K}
	we conclude that the quasi-finite birational morphism
	$X\to X'$ is an open embedding, as claimed.
	Since $X$ is saturated, we get $X=X'$, so $Y=Y^{\sat}$, as needed.
	
	It remains to show \ref{item:functoriality-diagram}, that is,
	to construct a morphism $Y^{\sat}\to X^{\sat}$.
	We first do so in the Galois case.
	Note that we can form the diagram \eqref{diagram:saturation}
	without the saturatedness assumption on $X$.
	Then we conclude from \ref{item:functoriality-iff}
	that $X'=X^{\sat}$, so we have found the desired morphism.
	
	In the general case, consider the integral closure $Z$
	of $Y$ in the Galois closure of the function field extension
	$\kappa(Y)/\kappa(X)$.
	Note that $Z\to Y$ and $Z\to X$ are Galois covers with
	Galois groups $H$ and $G$ respectively. Further, note
	that $H\leq G$, $Y=Z\git  H$, and $X=Z\git  G$.
	By the above discussion we also have that $Y^{\sat}=Z^{\sat}\git H$
	and $X^{\sat}=Z^{\sat} \git  G$.
	The morphism $Y^{\sat}\to X^{\sat}$
	is obtained by factoring the morphism $Z^{\sat}\to X^{\sat}$
	via the $H$-quotient $Z^{\sat} \git  H=Y^{\sat}$.
\end{proof}

\begin{EXM}[Theorem \ref{thm:functoriality}\ref{item:functoriality-iff}
	fails for scheme-saturation]
	\label{ex:covers}
	There exist covers $Y\to X$ of smooth surfaces
	such that $X$ is scheme-saturated, but $Y$ is not.
	Indeed, let $X$ be any smooth,
	scheme-saturated surface which is not saturated,
	see Section \ref{ssec:examples-scheme-sat} for concrete ones.
	By Lemma~\ref{lem:Galois},
	there is a cover $g\colon Y'\to X^{\sat}$
	such that $Y'$ is schematic.
	Put $Y=g^{-1}(X)$.
	Then $g|_{Y}\colon Y\to X$ is a cover
	and $Y\into Y'$ is an open embedding of schematic surfaces.
	The complement $Y'\setminus Y$ is the preimage
	of a finite set $X^{\sat}\setminus X$,
	which is non-empty because $X$ is not saturated.
	Hence, $Y'\setminus Y$ is finite and non-empty,
	so $Y$ is not scheme-saturated, as claimed.
\end{EXM}

\begin{PROP}
	\label{prop:functoriality-birational}
	Theorem \ref{thm:functoriality} holds
	if the proper morphism $\pi\colon Y\to X$ is birational.
	
	Moreover, if $X$ and $Y$ are schematic,
	then the analogous statement holds for scheme-saturation.
\end{PROP}
\begin{proof}
	We give a proof for saturation,
	for scheme-saturation the argument is exactly the same.
	As before, we note that the uniqueness of
	$\pi^{\sat}$ in \ref{item:functoriality-diagram}
	follows from the fact that $X^{\sat}$ is separated
	and $Y$ is a dense open subspace of $Y^{\sat}$.
	We now prove the existence part of \ref{item:functoriality-diagram}.

	Let $\ol{Y}$ be a Nagata compactification of $Y$ over $X^{\sat}$,
	that is, we have a commutative diagram
	\begin{equation*} 
		\begin{tikzcd}
			Y \ar[r,hook] \ar[d,"\pi"] &
			\ol{Y} \ar[d,"\ol{\pi}"] \\
			X  \ar[r,hook,"\eta_{X}"] & X^{\sat},
		\end{tikzcd}
	\end{equation*}
	where $Y\into \ol{Y}$ is an open embedding
	and $\ol{\pi}\colon \ol{Y}\to X^{\sat}$ is proper.
	Replacing $\ol{Y}$ by its normalisation we can assume that it is normal.

	Since $\pi$ is birational, so is $\ol{\pi}$.
	Let $E\subseteq \ol{Y}$ be the exceptional divisor of $\ol{\pi}$.
	Its open subspace $E_0\de E\cap Y$ is the exceptional locus of $\pi$.
	Since $\pi$ is proper, so is $E_0$.
	In particular, $E_0$ is closed in $E$,
	so $E_0$ is a sum of connected components of $E$.
	Since $E$ is contractible, by Lemma \ref{lem:patching}, so is $E-E_{0}$.
	Contracting it and replacing $\ol{Y}$ by its image
 	we can assume that $E\subseteq Y$.
	In particular, $\ol{Y}\setminus Y\cong X^{\sat}\setminus X$
	is a finite set, so $Y\into \ol{Y}$ is a big open embedding.
	We claim that it is the saturation of $Y$.
	By Definition \ref{def:saturation} we need to check
	that every big open embedding
	$\ol{Y}\into \ol{Y}'$ is an isomorphism.
	Let $U=\ol{Y}\setminus E$, 
	$U'=\ol{Y}'\setminus E$.
	Then $U\into U'$ and
	$\ol{\pi}|_{U}\colon U\into X^{\sat}$ are big open embeddings. 
	By Lemma \ref{lem:push-outs_in_Dalg},
	we have a big open embedding 
	$X^{\sat}\into U'\sqcup_{U} X^{\sat}$,
	which is an isomorphism because $X^{\sat}$ is saturated. 
	Hence, $U'\into X^{\sat}$ glues with 
	$\ol{\pi}\colon \ol{Y}\to X^{\sat}$
	to a morphism $\ol{Y}'\to X^{\sat}$. 
	Since $\ol{\pi}\colon \ol{Y}\to X^{\sat}$ is proper,
	so is the embedding $\ol{Y}\into \ol{Y}'$ by \citestacks{04NX},
	hence $\ol{Y}\into \ol{Y}'$ is an isomorphism, as needed. 

	It remains to prove \ref{item:functoriality-iff}.
	If $X$ or $Y$ is saturated,
	then $\eta_{X}$ or $\eta_{Y}$ is an isomorphism,
	so the composition 
	$\eta_{X}\circ \pi=\pi^{\sat}\circ \eta_{Y}$ is proper.
	It follows that $\eta_{Y}$ is proper and,
	since $\pi$ is surjective, $\eta_{X}$ is proper, too by
	\cite[{\href{https://stacks.math.columbia.edu/tag/04NX}{04NX},
	\href{https://stacks.math.columbia.edu/tag/0AGD}{0AGD}}]{stacks-project}.
	Hence $\eta_{X}$ and $\eta_{Y}$ are isomorphisms, as needed.
\end{proof} 	

\begin{proof}[Proof of Theorem \ref{thm:functoriality}]
	Applying Stein factorisation 
	\cite[\href{https://stacks.math.columbia.edu/tag/0A1B}{Tag 0A1B}]{stacks-project},
	we can assume that $\pi$ is finite or birational.
	These cases are settled in Propositions \ref{prop:covers}
	and \ref{prop:functoriality-birational}, respectively.
\end{proof}

\subsection{Reconstruction of the saturation from the category of reflexive sheaves}
\label{ssec:sat_from_ref}

In this section we prove that
the category $\Refll(X)$ of reflexive sheaves
on $X$ determines the surface model $X^{\sat}$ of $X$.
This is the analogue of the main theorem of \cite{BodBon5},
which asserts the same for $X^{\schsat}$.
Furthermore, we prove that given $X$,
the saturation morphism $\eta_X\colon X\to X^{\sat}$
can be reconstructed from $\Refll(X)$, too,
which improves the results of \cite{BodBon5}. 
Before we state this result in a precise way in
Theorem \ref{thm:sat_from_Ref} below, we need some preparations.

\begin{PROP}
\label{prop:Ref_gives_sat}
	Let $X$ be a surface,
	and let $\ol{\Refll(X)}$ be the geometric model
	of the category of reflexive sheaves on $X$,
	see \cite[Formula (23)]{BodBon5}.
	Then the following hold.
	\begin{enumerate}
		\item \label{item:Ref-big-opens}
			There are big open embeddings $X^{\sch}\into X$
			and $X^{\sch}\into \ol{\Refll(X)}$.
		\item \label{item:Ref-saturations}
			We have an isomorphism
			$\ol{\Refll(X)}^{\sat}\cong X^{\sat}$.
	\end{enumerate}
\end{PROP}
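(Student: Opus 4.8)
The plan is to reduce everything to the already-known schematic picture via the schematic locus. For part \ref{item:Ref-big-opens}, recall from Section \ref{ssec:alg_sp} that $X^{\sch}\subseteq X$ is a big open subspace (\citestacks{0ADD}), which gives the first big open embedding. For the second, I would unwind the construction of $\ol{\Refll(X)}$ from \cite[Formula (22)]{BodBon5}: since the category $\Refll(X)$ of reflexive sheaves is insensitive to removing a closed subset of codimension $\geq 2$ (reflexive sheaves on a normal surface extend uniquely across such sets), one has an equivalence $\Refll(X)\simeq\Refll(X^{\sch})$, and hence $\ol{\Refll(X)}\cong\ol{\Refll(X^{\sch})}$. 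By \cite[Corollary 5.6]{BodBon5} applied to the schematic surface $X^{\sch}$, this is the scheme-saturated model $(X^{\sch})^{\schsat}$, which by \cite[Proposition 2.9]{BodBon5} contains $X^{\sch}$ as a big open subscheme. This yields the big open embedding $X^{\sch}\into\ol{\Refll(X)}$ and establishes \ref{item:Ref-big-opens}.

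For part \ref{item:Ref-saturations}, I would combine \ref{item:Ref-big-opens} with Lemma \ref{lem:sat_and_sch_commute}. By that lemma, $(X^{\sat})^{\sch}\cong(X^{\sch})^{\schsat}\cong\ol{\Refll(X)}$, where the last isomorphism is the one established in the proof of \ref{item:Ref-big-opens}. Thus $\ol{\Refll(X)}$ is a big open subspace of the saturated surface $X^{\sat}$, so the inclusion $\ol{\Refll(X)}\into X^{\sat}$ is a big open embedding in $\Dalg$. Since $X^{\sat}$ is saturated, it is, by the universal property of Definition \ref{def:saturation} (or directly by Lemma \ref{lem:sat_agree}), a saturated model of any surface admitting a big open embedding into it; in particular it is the saturated model of $\ol{\Refll(X)}$. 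By uniqueness of the saturated model (Lemma \ref{lem:sat_uniq}), this identifies the saturated model of $\ol{\Refll(X)}$ with that of $X$, namely $X^{\sat}$.

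The main obstacle I anticipate is the step identifying $\ol{\Refll(X)}$ with $\ol{\Refll(X^{\sch})}$, i.e.\ showing that the geometric model of the category of reflexive sheaves only sees the schematic locus. This requires checking that the construction of \cite[Formula (22)]{BodBon5} — which a priori is phrased for schematic surfaces — behaves well here, and that the restriction functor $\Refll(X)\to\Refll(X^{\sch})$ is an equivalence. The latter is standard (a reflexive sheaf on a normal space is determined by its restriction to any big open subspace, by $j_*j^*\cong\Id$ on reflexives), but one should be careful that \cite[Corollary 5.6]{BodBon5} and \cite[Proposition 2.9]{BodBon5} are stated for schematic surfaces, which is precisely why we pass through $X^{\sch}$ rather than working with $X$ directly. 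Once this identification is in place, the rest is a formal consequence of Lemma \ref{lem:sat_and_sch_commute} and the uniqueness of saturations.
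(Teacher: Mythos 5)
Your proposal is correct and follows essentially the same route as the paper: part \ref{item:Ref-big-opens} via the equivalence $\Refll(X)\simeq\Refll(X^{\sch})$ and \cite[Corollary 5.6]{BodBon5}, and part \ref{item:Ref-saturations} from the fact that $X$ and $\ol{\Refll(X)}$ share the big open subspace $X^{\sch}$. The only difference is that for \ref{item:Ref-saturations} the paper simply invokes that big open subspaces have the same saturated models, whereas you take a harmless detour through Lemma \ref{lem:sat_and_sch_commute}; both arguments are valid.
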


\begin{proof}
	Since the schematic locus $X^{\sch}$ of $X$ is a big open subspace,
	we have $\Refll(X) \cong \Refll(X^{\sch})$.
	By \cite[Corollary 5.6]{BodBon5},
	$\ol{\Refll(X^{\sch})}$ is isomorphic to $(X^{\sch})^{\schsat}$,
	which contains $X^{\sch}$ as a big open subspace.
	This proves \ref{item:Ref-big-opens}.
	Part \ref{item:Ref-saturations} follows because big open subspaces
	have the same saturation, see Proposition \ref{prop:univ-property}.
\end{proof}

The category $\Refll(X)$ is quasi-abelian (cf. \cite[Theorem 4.3]{BodBon5}), 
hence it admits the canonical exact structure,
see \cite[Corollary 3.14]{BodBon5}.
By \cite[Corollary 4.4]{BodBon4} the right abelian envelope
$\cA_r(\Refll(X))$, see \cite[Definition 4.2]{BodBon4}, 
is equivalent to $\Coh^{\geq 1}(X):= \Coh(X)/\Coh_0(X)$
which is the quotient of the category $\Coh(X)$ of
coherent sheaves on $X$ by the Serre subcategory $\Coh_0(X)$
of sheaves with zero dimensional support. 

Proposition \ref{prop:Ref_gives_sat}\ref{item:Ref-big-opens}
implies that the surfaces $X$ and $\ol{\Refll(X)}$ share a big open subspace,
so their categories of reflexive sheaves are equivalent.
Fix an equivalence 
\begin{equation*}
	\varphi\colon \Refll(X^{\sch}) \to \Refll(\ol{\Refll(X)})).
\end{equation*}
By \cite[Proposition 4.9]{BodBon4},
the functor $\varphi$ induces an
equivalence of the right abelian envelopes
$\cA_r(\varphi) \colon \Coh^{\geq 1}(X^{\sch}) \to \Coh^{\geq 1}(\ol{\Refll(X)})$.
In view of \cite[Proposition 2.7]{CalPir},
the morphism $\cA_r(\varphi)$ induces an isomorphism of
locally ringed spaces 
$\psi \colon (X^{\sch})^{\geq 1} 
\xrightarrow{\simeq} \ol{\Refll(X)}^{\geq 1}$,
where for a scheme $Y$,
$i \colon Y^{\geq 1} \into Y$
is the subset of points of dimension at
least one with the subspace topology and the sheaf $i^{-1} \cO_Y$,
see \cite[\S 2]{CalPir}.
Then, by \cite[Proposition 3.1]{CalPir}, 
the isomorphism $\psi$ gives rise to big open subschemes 
$U \subset X^{\sch}$, $V \subset \ol{\Refll(X)}$,
and an isomorphism $f\colon U \xrightarrow{\simeq} V$
which restricts to $\psi$.
We can thus view $f$ as a big open embedding
$f' \colon U \into \ol{\Refll(X)}$ of a big open subscheme
$U \subset X^{\sch}$.
Composing $f'$ with the saturation
$\ol{\Refll(X)}\into \ol{\Refll(X)}^{\sat}$ yields a big open embedding 
$j \colon U \into \ol{\Refll(X)}^{\sat}$ of a big open subspace 
$U \subset X^{\sch} \subset X$ 
into a saturated surface.
By the universal property of saturation 
(Proposition \ref{prop:univ-property}),
$j$ induces a unique morphism 
\begin{equation}\label{eqtn:ol_j}
	\overline{j} \colon X\to \ol{\Refll(X)}^{\sat}. 	
\end{equation}

\begin{THM}
	\label{thm:sat_from_Ref}
	Let $X$ be a surface.
	Under the isomorphism $\ol{\Refll(X)}^{\sat}\cong X^{\sat}$
	of Proposition \ref{prop:Ref_gives_sat}\ref{item:Ref-saturations},
	the morphism \eqref{eqtn:ol_j}
	corresponds to the saturation $\eta_X \colon X\to X^{\sat}$.
\end{THM}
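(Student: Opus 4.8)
The plan is to recognise the morphism $\overline{j}$ of \eqref{eqtn:ol_j} as a saturation of $X$ in its own right, and then to identify the resulting comparison of saturated models with the isomorphism of Proposition~\ref{prop:Ref_gives_sat}\ref{item:Ref-saturations}. First I would note that, by construction, $\overline{j}\colon X\to\ol{\Refll(X)}^{\sat}$ is the morphism furnished by the universal property of Definition~\ref{def:saturation} for the saturated surface $\ol{\Refll(X)}^{\sat}$, applied to the big open embedding $U\into X$ and the morphism $j\colon U\into\ol{\Refll(X)}^{\sat}$ (itself a composite of the big open embeddings $f'\colon U\into\ol{\Refll(X)}$ and $\eta_{\ol{\Refll(X)}}$). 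In particular $\overline{j}\in\Hom_{\Dalg}(X,\ol{\Refll(X)}^{\sat})$ is a big open embedding with $\overline{j}|_U=j$, and since its target is saturated, it is a saturation of $X$. By Lemma~\ref{lem:sat_uniq} there is then a unique isomorphism $\Psi\colon X^{\sat}\to\ol{\Refll(X)}^{\sat}$ with $\Psi\circ\eta_X=\overline{j}$, and the theorem is the assertion that $\Psi$ equals the isomorphism $\Phi$ of Proposition~\ref{prop:Ref_gives_sat}\ref{item:Ref-saturations}.

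Next I would unwind $\Phi$. By Proposition~\ref{prop:Ref_gives_sat}\ref{item:Ref-big-opens} the schematic locus $X^{\sch}$ embeds as a big open subspace $\iota\colon X^{\sch}\into X$ and $\kappa\colon X^{\sch}\into\ol{\Refll(X)}$; composing with the respective saturations, $\eta_X\circ\iota$ and $\eta_{\ol{\Refll(X)}}\circ\kappa$ are big open embeddings of $X^{\sch}$ into saturated surfaces, hence saturations of $X^{\sch}$, and $\Phi$ is (this being the content of \enquote{big open subspaces have the same saturated model}) the unique isomorphism with $\Phi\circ\eta_X\circ\iota=\eta_{\ol{\Refll(X)}}\circ\kappa$. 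Since $U\subseteq X^{\sch}\subseteq X$ is dense and $\ol{\Refll(X)}^{\sat}$ is separated, to get $\Psi=\Phi$ it suffices to show that $\overline{j}$ and $\Phi\circ\eta_X$ agree on $U$; and as $\overline{j}|_U=j=\eta_{\ol{\Refll(X)}}\circ f'$ while $(\Phi\circ\eta_X)|_U=\eta_{\ol{\Refll(X)}}\circ\kappa|_U$, and $\eta_{\ol{\Refll(X)}}$ is an embedding, this comes down to the single equality $f'=\kappa|_U$ of big open embeddings $U\into\ol{\Refll(X)}$.

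The hard part will be this equality, i.e.\ that the big open embedding $f'$ obtained from the reconstruction of \cite{CalPir} applied to the equivalence $\varphi$ agrees on $U$ with the embedding $\kappa$ of Proposition~\ref{prop:Ref_gives_sat}\ref{item:Ref-big-opens}. The key observation is that $\varphi$ is not arbitrary: it is the equivalence $\Refll(X^{\sch})\to\Refll(\ol{\Refll(X)})$ produced by the fact that $X$ and $\ol{\Refll(X)}$ share the big open subspace $X^{\sch}$, hence, up to natural isomorphism, restriction of reflexive sheaves along $\kappa$. Passing to right abelian envelopes, $\cA_r(\varphi)$ is then the equivalence $\Coh^{\geq 1}(X^{\sch})\to\Coh^{\geq 1}(\ol{\Refll(X)})$ induced by $\kappa$, so the isomorphism of locally ringed spaces $\psi$ of \cite[Proposition~2.7]{CalPir} and the isomorphism $f$ of big open subschemes of \cite[Proposition~3.1]{CalPir} are the ones induced by $\kappa$; one may thus choose $U$, $V$ and $f$ with $f=\kappa|_U$, and therefore $f'=\kappa|_U$. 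Concretely, this requires tracking the functoriality of the chain of reconstructions, from $\Refll(-)$ to $\cA_r(\Refll(-))=\Coh^{\geq 1}(-)$ to locally ringed space to big open subscheme, through \cite{BodBon5} and \cite{CalPir}, and checking that an equivalence of categories of reflexive sheaves induced by an isomorphism of big open subschemes is carried back to a restriction of that isomorphism. Granting $f'=\kappa|_U$, the previous paragraph gives $\overline{j}|_U=(\Phi\circ\eta_X)|_U$, hence $\overline{j}=\Phi\circ\eta_X$ by density, and finally $\Psi=\Phi$, which is Theorem~\ref{thm:sat_from_Ref}.
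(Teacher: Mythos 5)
Your first paragraph is, in substance, the paper's entire proof: by construction $\overline{j}$ of \eqref{eqtn:ol_j} is a morphism in $\Dalg$ from $X$ to the saturated surface $\ol{\Refll(X)}^{\sat}$, hence by Definition \ref{def:saturation} it is a saturation of $X$, and Lemma \ref{lem:sat_uniq} identifies it with $\eta_X \colon X\to X^{\sat}$ via a unique isomorphism. The paper reads the phrase \enquote{under the isomorphism of Proposition \ref{prop:Ref_gives_sat}\ref{item:Ref-saturations}} in exactly this way: that proposition only asserts the \emph{existence} of an isomorphism of saturated models (its proof being that $X$ and $\ol{\Refll(X)}$ share the big open subspace $X^{\sch}$), so the identification furnished by uniqueness of the saturation is all that is claimed, and no compatibility with a preferred isomorphism needs to be verified.

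Where you go beyond this---insisting that the specific isomorphism $\Phi$ induced by the common big open subspace $X^{\sch}$ satisfies $\Phi\circ\eta_X=\overline{j}$---your argument has a genuine gap, and the stronger statement is not even available in the generality in which the paper sets things up. The whole weight rests on the equality $f'=\kappa|_U$, which you only gesture at (\enquote{tracking the functoriality of the chain of reconstructions}); neither \cite{BodBon5} nor \cite{CalPir} is quoted in a form that yields it, and verifying that the passage from $\Refll(-)$ through $\cA_r(\Refll(-))=\Coh^{\geq 1}(-)$ to the locally ringed space and the big open subscheme carries a restriction-induced equivalence back to the restriction of $\kappa$ is precisely the compatibility check the paper's short proof is designed to avoid. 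Moreover, the paper merely fixes \emph{some} equivalence $\varphi\colon \Refll(X^{\sch})\to\Refll(\ol{\Refll(X)})$: if $X^{\sch}$ admits a nontrivial automorphism $\sigma$, replacing $\varphi$ by $\varphi\circ\sigma^{*}$ changes the induced embedding $f'$, hence $j$ and $\overline{j}$, while $\Phi$ is unchanged, so the equality $\Phi\circ\eta_X=\overline{j}$ cannot hold for an arbitrary choice of $\varphi$. To make your reading correct you would have to fix $\varphi$ to be (isomorphic to) the restriction equivalence along $\kappa$ and then actually carry out the functoriality argument through \cite{BodBon4} and \cite{CalPir}; as written, that step is missing. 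For the theorem as stated and proved in the paper, your first paragraph already suffices.
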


\begin{proof}
	Since $\ol{\Refll(X)}^{\sat}$ is saturated,
	the big open embedding \eqref{eqtn:ol_j} of $X$
	into $\ol{\Refll(X)}^{\sat}$ is the saturation of $X$.
	The latter is unique up to 
	a unique isomorphism by Corollary \ref{cor:sat_unique}.
\end{proof}

\section{Affinisation of saturated surfaces}
\label{sec:affinisation}

In this section we prove Theorem \ref{thm:intro_sat_implies_proper},
which implies that a saturated surface is proper over its affinisation
provided the latter is non-trivial.
To do this, we start by recalling some 
known properties of affinisation (Section \ref{ssec:def_of_aff})
and of the intersection form on singular surfaces 
(Section \ref{ssec:intersection form}),
which are frequently used in the remaining part of the article.
The proof of Theorem \ref{thm:intro_sat_implies_proper}
is given in Section \ref{ssec:prop_of_aff}.
Next, in Section \ref{ssec:surj_of_aff}
we comment that the affinisation morphism from 
a saturated surface is always surjective 
(Proposition \ref{prop:surjective-2}) 
and coincides with the Iitaka fibration
 associated to the boundary divisor (Remark \ref{rem:Iitaka}). 

\subsection{The affinisation of surfaces}\label{ssec:def_of_aff}

We now recall basic properties of the affinisation.
For more details in the setting of locally ringed spaces,
we refer to \cite[Proposition 3.4]{GorWed}.

The \emph{affinisation functor} maps a surface $X$
to its \emph{affinisation} $X^{\aff} := \Spec H^0(X, \cO_X)$,
and a morphism $f\colon X\to Y$ of surfaces to a morphism
$f^{\aff} \colon X^{\aff} \to Y^{\aff}$
of affine schemes induced by the morphism 
$f^{\sharp} \colon \cO_Y \to f_* \cO_X$ of sheaves on $Y$. 
By \citestacks{081X}, every surface has an 
\emph{affinisation morphism} $\alpha_{X}\colon X\to X^{\aff}$
which satisfies the following universal property:
every morphism from $X$ to an affine scheme factors uniquely through $\alpha_{X}$.
It follows that for every morphism of surfaces $f\colon X\to Y$,
the morphisms $\alpha_Y \circ f$ and $f^{\aff} \circ \alpha_X$
from $X$ to $Y^{\aff}$ coincide, that is,
the affinisation morphism induces a natural transformation. 

Let $X$ be a surface.
By \cite[Corollary 6.2]{Schroer} the ring $H^{0}(X,\cO_{X})$
is a finitely generated algebra of dimension at most $2$,
hence $X^{\aff}$ is an affine variety.
We note that this no longer holds in higher dimension,
see \cite[Example 3.2.3]{Brion_group-notes} or \cite{Vakil-non-fg}
for an explicit example of dimension~$3$.

Since $X$ is normal,
by the universal property of the normalisation so is $X^{\aff}$. 
If $\iota\colon U\into X$ is a big open embedding,
then $\iota^{\aff}\colon U^{\aff}\to X^{\aff}$ is an isomorphism
by Hartogs's lemma. 
Hence the affinisations of $U$ and $X$ coincide,
i.e., $\alpha_{U}=\alpha_{X}\circ\iota$. 
In particular, the affinisation of $X$ coincides 
with the affinisation of its schematic locus $X^{\sch}$,
and of its saturated model $X^{\sat}$.

\begin{LEM}\label{lem:surjective-1}
	Let $X$ be a surface.
	Then the affinisation morphism $\alpha_X\colon X\to X^{\aff}$ is dominant.
	Moreover, if $\dim X^{\aff}\leq 1$,
	then $\alpha_X$ is surjective.
\end{LEM}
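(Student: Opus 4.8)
The plan is to prove dominance first, then upgrade to surjectivity when $\dim X^{\aff} \leq 1$. For dominance, the affinisation morphism $\alpha_X \colon X \to X^{\aff} = \Spec H^0(X,\cO_X)$ is induced by the identity map on global sections, so the pullback $H^0(X^{\aff}, \cO_{X^{\aff}}) = H^0(X,\cO_X) \to H^0(X, \alpha_X^* \cO_{X^{\aff}}) \to H^0(X, \cO_X)$ is the identity; in particular it is injective. Since $X^{\aff}$ is an integral affine scheme (it is a domain because $X$ is irreducible, hence $H^0(X,\cO_X)$ has no zero divisors), injectivity of $H^0(X^{\aff},\cO_{X^{\aff}}) \to H^0(X,\cO_X)$ forces the scheme-theoretic image of $\alpha_X$ to be all of $X^{\aff}$, i.e.\ $\alpha_X$ is dominant.

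For the second statement, suppose $\dim X^{\aff} \leq 1$. I would argue by contradiction: if $\alpha_X$ is not surjective, then since $\alpha_X$ is dominant its image omits some closed point, and in the case $\dim X^{\aff}=1$ the image is a dense constructible subset of a curve, hence the complement is a finite set of closed points; when $\dim X^{\aff}=0$ there is nothing to prove, so assume $X^{\aff}$ is a curve. Pick a closed point $x \in X^{\aff}$ not in the image. Because $X^{\aff}$ is a normal affine curve, $x$ is a Cartier divisor, so there is a regular function $g \in H^0(X^{\aff},\cO_{X^{\aff}})$ vanishing at $x$; more usefully, one can localise: the complement of $x$ in $X^{\aff}$ is again affine (remove the point, a codimension-one closed subset of an affine curve — wait, this needs care since $X^{\aff} \setminus \{x\}$ need not be affine). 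The cleaner route: since $x \notin \alpha_X(X)$, the function $\alpha_X$ factors through the open subscheme $X^{\aff} \setminus \{x\}$. But then every global section of $\cO_{X^{\aff}}$, when restricted to $X$, extends to a section over $X^{\aff} \setminus \{x\}$, and I want to derive that some rational function on $X^{\aff}$ with a pole only at $x$ becomes regular on $X$, contradicting $H^0(X,\cO_X) = H^0(X^{\aff},\cO_{X^{\aff}})$.

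Let me restructure the surjectivity argument more carefully, as this is the main obstacle. The key point is: $\alpha_X$ factors as $X \to X^{\aff}\setminus\{x\} \hookrightarrow X^{\aff}$ for any $x$ not in the image, where the open immersion $U \coloneqq X^{\aff}\setminus\{x\} \hookrightarrow X^{\aff}$ is a \emph{big open embedding} of the (possibly singular but normal, since $X^{\aff}$ is normal) affine curve $X^{\aff}$ precisely when $\dim X^{\aff} = 1$ — no, a point has codimension one in a curve, so this is not a big open embedding; that only works for surfaces. Instead I would use that $X^{\aff}$ being a normal affine curve means $\cO_{X^{\aff},x}$ is a DVR, and normality of $X$ together with the factorisation $X \to U$ gives $H^0(X,\cO_X) \supseteq H^0(U,\cO_U) \supsetneq H^0(X^{\aff},\cO_{X^{\aff}})$, the last inclusion being strict because a function with a pole of high enough order at the single point $x$ lies in $H^0(U,\cO_U)\setminus H^0(X^{\aff},\cO_{X^{\aff}})$ (here I use that $X^{\aff}$ is a finitely generated domain, so $U$ is a nonempty open in an affine curve and carries such functions). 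This contradicts $H^0(X,\cO_X) = H^0(X^{\aff},\cO_{X^{\aff}})$, which holds because $\alpha_X$ is the affinisation morphism (global sections pull back isomorphically). Hence $\alpha_X$ is surjective. The main subtlety to handle rigorously is the existence of a regular function on a Zariski-open subset of a normal affine curve having a prescribed pole at a single removed point, which follows from the Riemann–Roch–type fact that on a smooth affine curve the ring of regular functions on a punctured neighbourhood is strictly larger than the coordinate ring; for a normal affine curve this is immediate from localisation at the DVR $\cO_{X^{\aff},x}$.
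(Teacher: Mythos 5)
Your argument is correct in substance, but it takes a longer, more hands-on route than the paper. For dominance you use that $\alpha_X^\sharp$ is the identity on $H^{0}(X,\cO_X)$, hence injective, so the scheme-theoretic image is all of $X^{\aff}$; the paper instead notes that a non-dominant $\alpha_X$ would factor through a proper closed (hence affine) subscheme of $X^{\aff}$, contradicting the universal property --- the two arguments are essentially equivalent. The real divergence is in the surjectivity step. The paper observes that when $\dim X^{\aff}\leq 1$ the dense image is automatically an affine open subscheme (its complement is a finite set of closed points of an affine curve), and then the universal property immediately forces this open subscheme to equal $X^{\aff}$. You instead argue by contradiction through function theory: if a closed point $x$ is missed, then $\alpha_X$ factors through $U=X^{\aff}\setminus\{x\}$, and a regular function on $U$ with a pole at $x$ pulls back to a global function on $X$ not coming from $H^{0}(X^{\aff},\cO_{X^{\aff}})$, contradicting $H^{0}(X,\cO_X)=H^{0}(X^{\aff},\cO_{X^{\aff}})$. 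This works (modulo the easy remark that the pullback $H^{0}(U,\cO_U)\to H^{0}(X,\cO_X)$ is injective, since $X\to U$ is dominant and $U$ integral), but your stated justification for the key input is off: the existence of $f\in H^{0}(U,\cO_U)\setminus H^{0}(X^{\aff},\cO_{X^{\aff}})$ is \emph{not} immediate from $\cO_{X^{\aff},x}$ being a DVR, because the inverse of a uniformiser may acquire poles at other points of $U$. The clean justifications are exactly the paper's observation (since $U$ is affine, $H^{0}(U,\cO_U)=H^{0}(X^{\aff},\cO_{X^{\aff}})$ would make the open immersion $U\into X^{\aff}$ an isomorphism, which is absurd) or Riemann--Roch on the smooth projective model, which you do mention as a fallback. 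So: correct plan, a repairable slip in one justification, and overall a less economical argument than the paper's direct appeal to the universal property.
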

\begin{proof}
	If $\alpha_{X}$ is not dominant,
	then its image is contained in a 
	proper closed subspace $Z$ of $X^{\aff}$,
	which is affine since $X^{\aff}$ is;
 	contrary to the universal property of $X^{\aff}$.
	If $\dim X^{\aff}\leq 1$,
	then since $\alpha_{X}$ is dominant, its image is affine,
	hence equal to $X^{\aff}$ by the universal property.
\end{proof}

\begin{LEM}\label{lem:resolution-aff}
	Let $\pi\colon Y\to X$ be a proper birational morphism of surfaces. 
	Then the composition $\alpha_{X}\circ \pi\colon Y\to X^{\aff}$
	is the affinisation morphism $\alpha_Y$ of $Y$.
	In other words, $\pi^{\aff}$ is the identity.
\end{LEM}
\begin{proof}
	As geometric fibres of a birational morphism are connected,
	\cite[\href{https://stacks.math.columbia.edu/tag/0A1B}{Tag 0A1B}]{stacks-project}
	implies that $\pi_* \cO_Y \simeq \cO_X$.
	Hence, $\pi^{\aff}$ is the identity morphism.
	Since the affinisation morphism is a natural transformation,
	we conclude that $\alpha_Y \circ \pi = \pi^{\aff} \circ \alpha_X = \alpha_X$.
\end{proof}

\begin{PROP}\label{prop:proper-surj-aff}
	Let $\pi\colon Y\to X$ be a proper surjective morphism of surfaces.
	Then the induced morphism 
	$\pi^{\aff}\colon Y^{\aff}\to X^{\aff}$ is finite and surjective.
	In particular, $\dim Y^{\aff}=\dim X^{\aff}$.
\end{PROP}
\begin{proof}
	Applying Stein factorisation and Lemma \ref{lem:resolution-aff},
	we can assume that $\pi$ is finite.
	
	By Lemma \ref{lem:surjective-1}, the affinisation morphism
	$\alpha_{X}\colon X\to X^{\aff}$ is dominant, hence so is the composition
	$\alpha_{X}\circ \pi\colon Y\to X\to X^{\aff}$, which is equal to
	$\pi^{\aff}\circ \alpha_{Y}\colon Y\to Y^{\aff}\to X^{\aff}$.
	We conclude that $\pi^{\aff}\colon Y^{\aff}\to X^{\aff}$ is dominant,
	so it is enough to prove that $\pi^{\aff}$ is finite.
	Moreover, since $\pi^{\aff}$ is affine, 
	it is enough to prove that $\pi^{\aff}$ is proper \citestacks{04NZ}.
	
	We claim that it is enough to find a morphism of surfaces $\tau\colon Z\to Y$
	such that the compositions $\pi\circ \tau\colon Z\to X$ and 
	$(\pi\circ \tau)^{\aff}\colon Z^{\aff}\to X^{\aff}$ are finite and surjective.
	To see this, first note that $\tau$ is finite \citestacks{081Z},
	hence $\tau^{\aff}$ is dominant by the observation above.
	Since $\pi^{\aff}\circ \tau^{\aff}=(\pi\circ \tau)^{\aff}$ is proper,
	we get that $\pi^{\aff}$ is proper \citestacks{0AGD}, as needed.
	
	As in the proof of Proposition \ref{prop:covers}, we can assume that
	$\pi$ is either totally inseparable, or separable. 
	By the above reduction,
	we can assume that $\pi$ is either a power of Frobenius, or a Galois cover.
	In the first case, $\pi^{\aff}$ is a power of Frobenius, 
	too, so it is proper, as needed.
	
	Assume that $\pi$ is a Galois cover,
	i.e.\ a quotient of $Y$ by the Galois group $G$,
	see Lemma \ref{lem:lifting-Galois}\ref{item:G-quotient}.
	Since affinisation is a natural transformation,
	we get a $G$-action on $Y^{\aff}$
	such that $\alpha_{Y}$ is $G$-equivariant.
	Let $\psi\colon Y^{\aff}\to Y^{\aff}\git G$ be the quotient morphism,
	and consider the quotient $\ol{\alpha} \colon X=Y\git G\to Y^{\aff}\git G$ 
	of $\alpha_{Y}\colon Y\to Y^{\aff}$.
	Since $Y^{\aff}\git G$ is affine, 
	$\ol{\alpha}$ factors through the affinisation of $X$,
	i.e.\ $\ol{\alpha}=\varphi\circ \alpha_{X}$ for some 
	$\varphi\colon X^{\aff}\to Y^{\aff}\git G$. 
	We claim that the diagram
	\begin{equation*}
		\begin{tikzcd}
			Y \ar[dd,"\pi"] \ar[rr,"\alpha_{Y}"] && 
			Y^{\aff} \ar[ld,"\psi"] \ar[dd,"\pi^{\aff}"] \\
			& Y^{\aff}\git G &  \\
			X=Y\git G \ar[rr,"\alpha_{X}"] \ar[ru,, "\ol{\alpha}"] && 
			X^{\aff}. \ar[lu,"\varphi"]
		\end{tikzcd}
	\end{equation*}
	commutes, i.e.\ $\psi=\varphi\circ \pi^{\aff}$.
	By definition of $\ol{\alpha}$ we have
	$\psi\circ\alpha_{Y} = \ol{\alpha}\circ \pi =
	\varphi\circ\alpha_{X} \circ \pi =
	\varphi\circ \pi^{\aff}\circ\alpha_{Y}$,
	so the morphisms $\psi$ and 
	$\varphi\circ \pi^{\aff}$
	agree on $\alpha_{Y}(Y)$.
 	Since $\alpha_{Y}(Y)$ is dense in $Y^{\aff}$
	by Lemma \ref{lem:surjective-1} 
	and $Y^{\aff}\git G$ is separated, 
	we get $\psi=\varphi\circ \pi^{\aff}$, as claimed.
	Now since the quotient morphism 
	$\psi\colon Y^{\aff}\to Y^{\aff}\git G$ is finite,
	we get that $\pi^{\aff}$ is proper by \citestacks{04NX}.
\end{proof}

\begin{REM}
	In Remark \ref{rem:Iitaka} we interpret $\dim X^{\aff}$
	as the Iitaka dimension of the boundary of a compactification of $X$.
	This way, the equality $\dim Y^{\aff}=\dim X^{\aff}$ 
	in Proposition \ref{prop:proper-surj-aff} 
	can be seen as a consequence of Iitaka's 
	covering theorem \cite[Theorem 10.5]{Iitaka}.
\end{REM}	

\begin{COR}\label{cor:stable-class}
	The class of saturated surfaces with fixed dimension 
	of affinisation is stable under proper surjective morphisms.
\end{COR}
\begin{proof}
	This follows from 
	Theorem \ref{thm:functoriality}\ref{item:functoriality-iff}
	and Proposition \ref{prop:proper-surj-aff}.
\end{proof}

We end this preliminary section by the proof of the 
\enquote{if} part of Theorem \ref{thm:intro_sat_implies_proper}.
The analogous statement for scheme-saturation
was proved in \cite[Theorem 2.12]{BodBon5}.

\begin{PROP}
\label{prop:proper-implies-sat}
	Let $X$ be a surface such that
	the affinisation morphism $X\to X^{\aff}$ is proper.
    	Then $X$ is saturated.
\end{PROP}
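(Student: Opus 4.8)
The plan is to show that if $X\to X^{\aff}$ is proper, then any big open embedding $X\into Y$ is an isomorphism, so that $X$ is saturated by Lemma~\ref{lem:sat_agree}. First I would recall that, by the discussion in Section~\ref{ssec:def_of_aff}, a big open embedding $\iota\colon X\into Y$ induces an isomorphism on affinisations $\iota^{\aff}\colon X^{\aff}\xrightarrow{\simeq}Y^{\aff}$ (Hartogs's lemma), so that the affinisation morphisms fit into a commutative triangle: $\alpha_X = \alpha_Y\circ\iota$, where $\alpha_Y\colon Y\to Y^{\aff}=X^{\aff}$. Thus we have a factorisation
\begin{equation*}
	\begin{tikzcd}
		X \ar[r,hook,"\iota"] \ar[dr,"\alpha_X"'] & Y \ar[d,"\alpha_Y"]\\
		& X^{\aff}.
	\end{tikzcd}
\end{equation*}

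Next I would apply the cancellation property for proper morphisms. The composition $\alpha_Y\circ\iota = \alpha_X$ is proper by hypothesis, and $\iota$ is separated (being an open embedding); hence by \citestacks{04NX} the morphism $\iota\colon X\to Y$ is proper. A proper morphism is in particular universally closed, so the image $\iota(X)\subseteq Y$ is closed; but it is also a big open subspace, hence dense. A closed dense subspace of the irreducible space $Y$ must be all of $Y$, so $\iota$ is surjective. Being a surjective open embedding, $\iota$ is an isomorphism.

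The argument has essentially no obstacle: the one point requiring a little care is the appeal to \citestacks{04NX}, which states that if $g\circ f$ is proper and $g$ is separated then $f$ is proper — here $f=\iota$ and $g=\alpha_Y$, and $\alpha_Y$ is separated because $X^{\aff}$ is a scheme (indeed affine) and $Y$ is separated over $\Spec\kk$, so any morphism $Y\to X^{\aff}$ is separated. The remaining steps — that a proper morphism is universally closed, that a closed dense subspace of an irreducible algebraic space is the whole space, and that a surjective open immersion is an isomorphism — are all standard. (This is the analogue of \cite[Theorem~2.12]{BodBon5}; the only adaptation needed is that all the cited properties of proper morphisms hold for algebraic spaces, which they do.)
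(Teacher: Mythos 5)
Your proof is correct and is essentially the paper's argument: the paper applies the same cancellation step (factor $\alpha_X$ through the big open embedding using that affinisations agree, then invoke \citestacks{04NX} to conclude the embedding is proper, hence an isomorphism), only it runs it directly on the saturation $\eta_X\colon X\into X^{\sat}$ instead of on an arbitrary big open embedding combined with Lemma~\ref{lem:sat_agree}. This difference is immaterial, and your extra care about separatedness of $\alpha_Y$ and about a proper open immersion with dense image being an isomorphism just makes explicit what the paper leaves implicit.
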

\begin{proof}
	Since $\eta_{X}\colon X\into X^{\sat}$
	is a big open embedding, 
	we have $\alpha_{X}=\alpha_{X^{\sat}}\circ \eta_{X}$.
	By assumption, $\alpha_{X}$ is proper,
	hence so is $\eta_{X}$ \citestacks{04NX}.
	Thus, $\eta_{X}$ is an isomorphism.
\end{proof}

\subsection{Resolution of surface singularities and the intersection form}
\label{ssec:intersection form}

In order to reduce our setting to complements
of divisors in smooth projective surfaces, 
we use the following standard consequence of 
Chow's lemma and resolution of surface singularities.

\begin{PROP}[Resolution of singularities]
\label{prop:resolution}
	Let $X$ be a surface.
	The following hold.
	\begin{enumerate}
		\item\label{item:resolution-exists} 
        		There is a smooth projective surface $\ol{Y}$, 
			a simple normal crossings divisor $D$,
			and a proper birational morphism 
			$\pi\colon Y\de \ol{Y}\setminus D\to X$.
		\item\label{item:resolution-aff-sat}
        For any proper birational morphism $\pi\colon Y\to X$,
        the affinisations of $X$ and $Y$ coincide,
        i.e.\ $\alpha_Y = \alpha_X\circ\pi$.
        Moreover, $Y$ is saturated (or scheme-saturated) if and only if $X$ is.
	\end{enumerate}
\end{PROP}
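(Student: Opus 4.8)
For part \ref{item:resolution-aff-sat}, nothing new is needed: the equality $\alpha_{Y}=\alpha_{X}\circ\pi$ for an arbitrary proper birational morphism $\pi\colon Y\to X$ of surfaces is exactly Lemma \ref{lem:resolution-aff}, and since such a $\pi$ is automatically surjective (its image is closed and dense), Proposition \ref{prop:functoriality-birational}\ref{item:birational-iff} shows that $Y$ is saturated (respectively scheme-saturated) if and only if $X$ is. So the content of the proposition is part \ref{item:resolution-exists}, which I would prove by assembling compactification, Chow's lemma, and resolution of surface singularities.

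First I would compactify. By Nagata's compactification theorem for separated algebraic spaces of finite type over a Noetherian base, $X$ is a dense open subspace of a proper algebraic space over $\Spec\kk$; replacing the latter by its normalisation — which still contains $X$ as an open subspace, since $X$ is normal — we obtain a normal proper surface $\ol{X}_{0}$ with $X\subseteq\ol{X}_{0}$ open and $Z\de\ol{X}_{0}\setminus X$ closed of dimension at most $1$. Next, Chow's lemma for proper algebraic spaces provides, after passing to the irreducible component dominating $\ol{X}_{0}$ and normalising, a normal projective surface $\ol{X}_{1}$ together with a proper birational morphism $f\colon\ol{X}_{1}\to\ol{X}_{0}$. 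Finally, resolution of singularities for excellent two-dimensional schemes (Lipman) yields a proper birational morphism $g\colon\ol{X}_{2}\to\ol{X}_{1}$ with $\ol{X}_{2}$ a smooth projective surface. Writing $h_{0}\de f\circ g\colon\ol{X}_{2}\to\ol{X}_{0}$, we have a proper birational morphism from a smooth projective surface.

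It then remains to make the boundary an snc divisor by blowing up further. The preimage $h_{0}^{-1}(Z)$ is closed of dimension at most $1$; I would first blow up $\ol{X}_{2}$ at the finitely many of its closed points not lying on any one-dimensional component, so that the reduced preimage of $Z$ becomes a divisor, and then apply embedded resolution of curves on smooth surfaces (valid over any field) to turn its total transform into an snc divisor. Let $\ol{Y}\to\ol{X}_{2}$ be the resulting composition of blow-ups at closed points, let $h\colon\ol{Y}\to\ol{X}_{0}$ be the composite with $h_{0}$, and set $D\de(h^{-1}(Z))\redd$; then $D$ is an snc divisor on the smooth projective surface $\ol{Y}$. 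Since every blow-up above is centred over $h_{0}^{-1}(Z)$, it is an isomorphism over $h_{0}^{-1}(X)$, so $Y\de\ol{Y}\setminus D=h^{-1}(X)$ and $\pi\de h|_{Y}\colon Y\to X$, being the base change of the proper morphism $h$ along the open immersion $X\into\ol{X}_{0}$, is proper; it is birational because $h$ is and $X$ is dense. This proves \ref{item:resolution-exists}.

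The only step needing genuine care is the last one: arranging that the reduced preimage of $Z$ is snc, which forces one to dispose of the possibly zero-dimensional components of $h_{0}^{-1}(Z)$ and to invoke embedded resolution of plane curves in arbitrary characteristic. Everything else is a formal combination of Nagata compactification and Chow's lemma for algebraic spaces with Lipman's resolution of excellent surfaces, so I expect no further difficulty there.
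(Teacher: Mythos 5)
Your proposal is correct and follows essentially the same route as the paper: part \ref{item:resolution-aff-sat} is quoted from Lemma \ref{lem:resolution-aff} and Proposition \ref{prop:functoriality-birational}, and part \ref{item:resolution-exists} is obtained by Nagata compactification, Chow's lemma for proper algebraic spaces, and Lipman's resolution. The only difference is presentational: you spell out the final step of making the boundary an snc divisor (blowing up isolated points of the preimage of the boundary and then applying embedded resolution of curves), which the paper subsumes into its citation of resolution of singularities.
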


\begin{proof}
	Part \ref{item:resolution-aff-sat}
	follows from Proposition \ref{prop:proper-surj-aff}
	and Theorem \ref{thm:functoriality}\ref{item:functoriality-iff}.
	We now prove part \ref{item:resolution-exists}.

    	By the Nagata compactification theorem,
    	$X$ is an open subspace of a proper surface $\ol{X}$. 
	Applying Chow's lemma \cite[Theorem IV.3.1]{Knu},
    	we find a proper birational morphism 
	$\pi_0 \colon \ol{Z}\to \ol{X}$
	such that $\ol{Z}$ is quasi-projective.
	As $\ol{Z}$ is proper, we conclude that $\ol{Z}$ is projective.
	In particular, it is schematic.
	Composing $\pi_0$ with a resolution of singularities 
	\cite{Lipman_resolution} we get a proper birational morphism 
	$\pi\colon \ol{Y}\to \ol{X}$ such that $\ol{Y}$
 	is smooth projective and 
	$D\de \ol{Y}\setminus \pi^{-1}(X)$
	is a simple normal crossings divisor.
	The restriction of 
	$\pi$ to $Y\de \ol{Y}\setminus D$
	is the required morphism. 
\end{proof}

\begin{REM}[Intersection form]
\label{rem:intersection-form}
	Proposition \ref{prop:resolution}\ref{item:resolution-exists}
	allows to define a $\mathbb{Q}$-valued intersection product 
	for proper Weil divisors on a normal surface $X$ as in \cite{Mum1}.
	Namely, let $\pi\colon Y\to X$ be as in 
	\ref{prop:resolution}\ref{item:resolution-exists},
	and let $E\subseteq Y\subseteq \ol{Y}$ be its exceptional divisor,
	with components $E_{1},\dots,E_{n}$.
	On $\ol{Y}$ we have the standard intersection product,
	and the intersection matrix $[E_{i}\cdot E_{j}]$ of $E$
	is negative definite \cite[p.\ 230]{Mum1}.
	Hence, we can define a pullback of a proper Weil divisor 
	$D\subseteq X$ as the $\mathbb{Q}$-divisor 
	$\pi^{*}D=\pi^{-1}_{*}D+\sum_{i=1}^{n}a_{i}E_i$,
	where $\pi^{-1}_{*}D$ is the proper transform of $D$,
	and $a_{1},\dots,a_{n}$ are the unique rational numbers
	such that $\pi^{*}D\cdot E_{i}=0$ for all $i$.
	For a proper divisor $D'$ on $X$ we put $D\cdot D'=\pi^{*}D \cdot \pi^{*}D'$.
    	One readily checks, see \cite{Mum1},
    	that this definition is compatible with the projection formula
    	and, in particular, does not depend on~$\pi$. 
 \end{REM} 
	
	We say that a proper Weil divisor on a normal surface is \emph{negative}
    	(\emph{semi}-)\emph{definite} if its intersection matrix is so. 
    	We now summarize some useful properties of the intersection form.
	Their proofs are easily reduced,
	using Proposition \ref{prop:resolution},
	to the case of smooth surfaces,
	which is classical, see e.g.\ 
	\cite[Examples 7.1.16, 8.3.11]{Fulton_intersection-theory}. 

\begin{LEM}
\label{lem:intersection-product}
	Let $D$ be a proper Weil divisor on a surface $X$. Then the following hold.
	\begin{enumerate}
		\item\label{item:D-pullback-def}
       			Let $\pi\colon Y\to X$ be a proper birational morphism.
            		Then $D$ is negative (semi-)definite if and only if $\pi^{*}D$ is so.
            		Moreover, if $D$ is effective, then so is $\pi^{*}D$ 
            		and the support of $\pi^{*}D$ equals 
			the preimage of the support of $D$.
		\litem{Artin's criterion}\label{item:D-Artin} 
			$D$ is contractible (Definition \ref{def:contractible})
			if and only if it is negative definite.
		\litem{Hodge Index Theorem}\label{item:D-HIT}
            		Assume that $X$ is proper,
            		and let $\operatorname{NS}_{\mathbb{Q}}(X)$ 
			be the $\mathbb{Q}$-vector space of 
			$\mathbb{Q}$-Weil divisors modulo numerical equivalence. 
            		Then $(\operatorname{NS}_{\mathbb{Q}}(X),\cdot)$ 
			has signature $(1,\rho-1)$. 
            		Moreover, if $D$ is effective,
			then $D$ is not numerically trivial.
		\end{enumerate}
	\end{LEM}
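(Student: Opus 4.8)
The plan is to reduce all three statements to the classical case of a smooth projective surface by passing to a resolution and using the formal properties of the Mumford pullback recalled in Remark~\ref{rem:intersection-form}. The tools I will rely on, for a proper birational morphism $\pi\colon Y\to X$ of normal surfaces, are: $\pi^{*}$ is a section of $\pi_{*}$; it is an isometry, i.e.\ $\pi^{*}D\cdot\pi^{*}D'=D\cdot D'$; the $\pi$-exceptional curves span a negative definite subspace (cf.\ Remark~\ref{rem:intersection-form}, \cite{Mum1}); and, for $D$ effective, $\pi^{*}D$ is effective with $\Supp\pi^{*}D=\pi^{-1}(\Supp D)$ --- the ``negativity of contractions'', see \cite{Mum1} or \cite[Ch.~7--8]{Fulton_intersection-theory}.

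For \ref{item:D-pullback-def}: being negative (semi-)definite depends only on the reduced divisor, so I may take $D=D_{1}+\dots+D_{n}$ reduced, hence effective. Then $\Supp\pi^{*}D=\pi^{-1}(\Supp D)$, so the components of $\pi^{*}D$ are precisely the proper transforms $\pi^{-1}_{*}D_{i}$ and the exceptional curves $E_{j}$ with $\pi(E_{j})\in\Supp D$. Since $\pi^{*}D_{i}\cdot E_{j}=0$ for all $i,j$, the $\mathbb{Q}$-span of these components is an \emph{orthogonal} direct sum $\pi^{*}\langle D_{1},\dots,D_{n}\rangle\perp\langle E_{j}:\pi(E_{j})\in\Supp D\rangle$, in which the first summand is isometric to $\langle D_{1},\dots,D_{n}\rangle$ via $\pi^{*}$, and the second is negative definite, being a principal submatrix of the negative definite exceptional intersection matrix of $\pi$. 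As an orthogonal direct sum with a negative definite summand is negative definite (respectively, negative semi-definite) if and only if the complementary summand is, the equivalence follows; the ``moreover'' is the negativity of contractions quoted above.

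For \ref{item:D-Artin} I would quote Artin's contractibility criterion for negative definite reduced divisors on normal two-dimensional algebraic spaces, applied to each connected component of $D$ in turn; this produces the proper birational morphism $X\to X'$ to a normal algebraic space $X'$, an isomorphism over $X\setminus D$ and contracting $D$ to finitely many points. (It is here that working with algebraic spaces rather than schemes is indispensable.) For \ref{item:D-HIT}, I would fix a resolution $\pi\colon Y\to X$ with $Y$ smooth projective from Proposition~\ref{prop:resolution}\ref{item:resolution-exists} (the boundary divisor there is empty because $X$ is proper), use the orthogonal decomposition $\operatorname{NS}_{\mathbb{Q}}(Y)=\pi^{*}\operatorname{NS}_{\mathbb{Q}}(X)\perp\langle E_{1},\dots,E_{m}\rangle$ with $E_{1},\dots,E_{m}$ the exceptional curves, observe that the first summand is isometric to $\operatorname{NS}_{\mathbb{Q}}(X)$ and the second negative definite, and read off the signature $(1,\rho-1)$ from the classical Hodge Index Theorem on $Y$. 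For the final assertion, if $D\neq 0$ is effective then $\pi^{*}D$ is effective and nonzero by \ref{item:D-pullback-def}, so $\pi^{*}D\cdot H>0$ for an ample class $H$ on $Y$, whence $\pi^{*}D$, and therefore $D$, is not numerically trivial.

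I do not anticipate a serious obstacle, since the only non-formal ingredients --- negativity of contractions and Artin's theorem --- are standard. The points requiring care are: first, the bookkeeping in \ref{item:D-pullback-def} that the components of $\pi^{*}D$ are \emph{exactly} the proper transforms together with the exceptional curves lying over $D$, so that the displayed orthogonal splitting really exhausts the span of all components of $\pi^{*}D$; and second, citing a form of Artin's criterion valid for normal algebraic spaces, not merely for schemes.
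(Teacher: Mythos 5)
Your proposal is correct and, for parts \ref{item:D-pullback-def} and \ref{item:D-HIT}, essentially coincides with the paper's argument: the paper also reduces to a smooth model, uses the orthogonal decomposition of a divisor supported on $(\pi^{*}D)\redd$ into a pullback part plus a negative definite exceptional part, and for the Hodge Index statement embeds $\operatorname{NS}_{\mathbb{Q}}(X)$ isometrically into $\operatorname{NS}_{\mathbb{Q}}(Y)$ orthogonally to the exceptional span, concluding the effectivity claim with the same ample-divisor computation. The only real divergences are citation choices. For the \enquote{moreover} in \ref{item:D-pullback-def} you quote the negativity of contractions, whereas the paper reproves it via \cite[Lemma 3.41]{KolMor}; that is harmless. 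For \ref{item:D-Artin}, the paper does \emph{not} apply Artin's criterion directly on the normal space $X$: it resolves singularities (Proposition \ref{prop:resolution}\ref{item:resolution-exists}), notes by \ref{item:D-pullback-def} that the preimage of $D$ is negative definite, contracts it on the smooth model using \cite[Corollary 6.12(b)]{Art5}, and then descends the contraction to $X$ with the universality-of-contractions Lemma \ref{lem:contractions-are-universal}. Your one-line appeal to \enquote{a form of Artin's criterion valid for normal two-dimensional algebraic spaces} presupposes exactly this reduction (Artin's statement as usually quoted is on a smooth, or at least regular, model, and on a normal $X$ the intersection numbers are Mumford's); so either cite a reference that literally covers that case or include the resolve--contract--descend step as the paper does. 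With that caveat, the two proofs buy the same thing; the paper's version is merely self-contained at this point, which it needs since Lemma \ref{lem:contractions-are-universal} is reused later (e.g.\ in Lemma \ref{lem:fibre-type}).
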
	
	\begin{proof} \ref{item:D-pullback-def}
        By Proposition \ref{prop:resolution}\ref{item:resolution-exists}
        we can assume that $Y$ is smooth.
        Clearly, if $\pi^{*}D$ is negative (semi-)definite,
	then so is $D=\pi_{*}\pi^{*}D$.
        Assume that $D$ is negative semi-definite
        and let $G$ be a Weil divisor with support
	contained in $(\pi^{*}D)\redd$.
        We have $G=P+E$, where $P\de \pi^{*}\pi_{*}G$
        is a pullback of a divisor with support in $D$,
	and $E$ has support in the exceptional divisor of $\pi$.
        Then $P\cdot E=0$ and $E^2\leq 0$,
	with strict inequality if $E\neq 0$.
        It follows that $G^2\leq P^2\leq 0$,
        so $\pi^{*}D$ is negative semi-definite.
        Moreover, if $D$ is negative definite and $G\neq 0$,
        then either $P^2<0$, so $G^2<0$,
	or $P=0$ and $E\neq 0$, so $G^2=E^2<0$, as needed.
		
	Finally, assume that $D$ is effective,
	and write $\pi^{*}D=\pi^{-1}_{*}D+E$,
	where $\pi^{-1}_{*}D$ is the proper transform of $D$,
	and $E=\sum_{i=1}^{n}a_{i}E_{i}$, 
	where $E_1,\dots,E_n$ are all exceptional curves
	in the preimage of $D$.
	We need to show that $a_{i}>0$ for all $i$.
	We have $0\leq \pi^{-1}_{*}D\cdot E_i=(\pi^{*}D-E)\cdot E_i=-E\cdot E_i$
	and the inequality is strict for at least one $E_i$ 
	in each connected component of $\bigcup_{i}E_i$. 
        Hence by \cite[Lemma 3.41]{KolMor}, we have $a_{i}>0$ for all $i$, as needed.

	\ref{item:D-Artin}
	We argue as in \cite[Theorem 1.2]{Sakai_Weil-divisors}.
	By Proposition~\ref{prop:resolution}\ref{item:resolution-exists},
	there exists a proper birational morphism $\pi\colon Y\to X$
	such that $Y$ is smooth.
	By \ref{item:D-pullback-def}, 
	$D$ is negative definite if and only if 
	so is its preimage on $Y$, call it $D'$.
        By Artin's contractibility criterion 
	\cite[Corollary 6.12(b)]{Art5} and \cite[Theorem 2.3(a)]{Art1},
        $D'$ is negative definite if and only if it is contractible.
	Lemma \ref{lem:contractions-are-universal} 
	below shows that the contraction of $D'$ 
	factors as $\pi$ followed by the contraction of $D$, as needed.
		
	\ref{item:D-HIT}
	Like before, let $\pi\colon Y\to X$
	be a proper birational morphism such that 
	$Y$ is smooth projective.
	Let $\mathcal{E}$ be the subspace of 
	$\operatorname{NS}_{\mathbb{Q}}(Y)$ 
	spanned by the exceptional curves of $\pi$.
	Then $(\mathcal{E},\cdot)$ is negative definite.
	The pullback $D\mapsto \pi^{*}D$ gives an embedding
	$(\operatorname{NS}_{\mathbb{Q}}(X),\cdot)
	\into  (\operatorname{NS}_{\mathbb{Q}}(Y),\cdot)$,
	such that $\operatorname{NS}_{\mathbb{Q}}(X)^{\perp}=\mathcal{E}$. 
        The Hodge Index Theorem, see \cite[Theorem 8.3]{Iitaka},
	asserts that the intersection form on $Y$
        has signature $(1,\rho(Y)-1)$,
        which gives the required signature 
        $\operatorname{NS}_{\mathbb{Q}}(X)$.
        For the last statement, note that 
	if $D$ is an effective divisor on $X$,
        then $\pi^{*}D$ is effective by \ref{item:D-pullback-def},
        so for an ample divisor $H$ on $Y$ 
	we have $0<\pi^{*}D\cdot H=D\cdot \pi_{*}H$, as needed.
\end{proof}
	
In the above proof we used the following known result 
on uniqueness of contractions,
cf.\ \citestacks{0C5J} or \cite[Remark 1.26]{KolMor}. 

\begin{LEM}
\label{lem:contractions-are-universal}
	Let $D$ be a proper divisor on a surface $Y$.
	Let $\pi\colon Y\to X$ be a proper birational morphism
	of surfaces which maps $D$ to a point and
	is an isomorphism on $Y\setminus D$.
	Let $\varphi\colon Y\to X'$ be any proper morphism
	of separated algebraic spaces of finite type over $\kk$,
	such that $\varphi(D)$ is a point.
	Then there is a unique proper morphism 
	$\sigma\colon X\to X'$ such that $\varphi=\sigma\circ \pi$.
\end{LEM}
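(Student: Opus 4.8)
The plan is to mimic the construction of Proposition~\ref{prop:push-out}\ref{item:pushout-Gamma}: realise $X$ as the normalisation of the closure of the graph of the rational map $\varphi\circ\pi^{-1}$, and then apply Zariski's Main Theorem. Write $x_{0}\de\pi(D)$ and $x_{0}'\de\varphi(D)$; by hypothesis these are single points, and $\pi$ restricts to an isomorphism $Y\setminus D\xrightarrow{\sim}X\setminus\{x_{0}\}$, so $g\de\varphi\circ(\pi|_{Y\setminus D})^{-1}\colon X\setminus\{x_{0}\}\to X'$ is a well-defined morphism. Let $\Gamma_{0}\subseteq(X\setminus\{x_{0}\})\times X'$ be the graph of $g$, which is closed since $X'$ is separated, let $\ol{\Gamma}$ be its closure in $X\times X'$ with the reduced structure, and let $\Gamma\to\ol{\Gamma}$ be the normalisation, so that $\Gamma$ is a normal, integral, separated algebraic space of finite type over $\kk$. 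Since $Y$ is normal and $(\pi,\varphi)\colon Y\to X\times X'$ maps the dense open $Y\setminus D$ isomorphically onto $\Gamma_{0}\subseteq\ol{\Gamma}$, it factors through a morphism $h\colon Y\to\Gamma$ with $p\circ h=\pi$ and $p'\circ h=\varphi$, where $p\colon\Gamma\to X$ and $p'\colon\Gamma\to X'$ are induced by the two projections. Note that $\ol{\Gamma}\cap((X\setminus\{x_{0}\})\times X')=\Gamma_{0}$ and that normalisation is an isomorphism over the normal locus, so $p$ restricts to an isomorphism over $X\setminus\{x_{0}\}$; in particular $p$ is birational.

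Next I would check that $p$ is finite. As $p'\colon\Gamma\to X'$ is separated and $p'\circ h=\varphi$ is proper, the morphism $h$ is proper by \citestacks{04NX}; being proper, its image is closed and dense (it contains the image of the dense open $Y\setminus D$), hence $h$ is surjective. Since $p\circ h=\pi$ is proper and $h$ is surjective and quasi-compact, $p$ is proper by \cite[\href{https://stacks.math.columbia.edu/tag/04NX}{04NX}, \href{https://stacks.math.columbia.edu/tag/0AGD}{0AGD}]{stacks-project}. For quasi-finiteness it suffices to bound the fibre $F\de p^{-1}(x_{0})$, all other fibres being singletons: since $h$ is surjective and $h^{-1}(F)=\pi^{-1}(x_{0})=D$, we get $F=h(D)$, whence $p'(F)=\varphi(D)=\{x_{0}'\}$, so $F$ is supported at the single point of $\Gamma$ over $(x_{0},x_{0}')$. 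Thus $p$ is proper and quasi-finite, hence finite. A finite birational morphism of normal separated algebraic spaces is an isomorphism by Zariski's Main Theorem \citestacks{082K}, so $p$ is an isomorphism.

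It then remains to set $\sigma\de p'\circ p^{-1}\colon X\to X'$. By construction $\sigma\circ\pi=p'\circ p^{-1}\circ p\circ h=p'\circ h=\varphi$, and $\sigma$ is proper because $p'\circ h=\varphi$ is proper while $h$ is surjective and quasi-compact. Uniqueness is immediate: any $\sigma'$ with $\sigma'\circ\pi=\varphi$ agrees with $\sigma$ on the dense open $X\setminus\{x_{0}\}$, where both equal $g$, so $\sigma'=\sigma$ since $X'$ is separated and $X$ is reduced, see \citestacks{0EMR}. I expect no essential obstacle; the only points requiring attention are that, since we work with algebraic spaces rather than schemes, the graph and its closure must be formed inside the fibre product $X\times_{\Spec\kk}X'$ — which is separated because both factors are — together with the routine bookkeeping with the Stacks lemmas on stability of properness and quasi-finiteness under composition and surjective descent.
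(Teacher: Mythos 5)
Your proof is correct and follows essentially the same route as the paper's: the paper works directly with the closed image $Z$ of $(\pi,\varphi)$ in $X\times X'$ (your $\ol{\Gamma}$), shows that the first projection is proper, birational and one-to-one, concludes that it is an isomorphism by Zariski's Main Theorem, and sets $\sigma$ equal to the second projection composed with the inverse, so your extra normalisation step is a harmless detour. The only small inaccuracy is the claim that $p^{-1}(x_{0})$ is ``the single point of $\Gamma$ over $(x_{0},x_{0}')$'' --- the normalisation may have several points over $(x_{0},x_{0}')$ --- but finiteness of that fibre is all you need for quasi-finiteness, so the argument stands.
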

\begin{proof}	
	Since $\pi\colon Y\to X$ and $\varphi\colon Y\to X'$ are proper,
	the image $Z$ of $(\pi,\varphi)\colon Y\to X\times X'$ is closed
	and the projections $p\colon Z\to X$
	and $p'\colon Z\to X'$ are proper \citestacks{0AGD}.
	Since $Y$ is integral, so is $Z$.
	For $x\in X$ we have $p^{-1}(x)=\{x\}\times \varphi(\pi^{-1}(x))$, which is a point,
	hence $p$ is one-to-one.
	Since $\pi=p\circ (\pi,\varphi)\colon Y\to Z\to X$
	is birational and $Z$ is integral,
	$p\colon X\to Z$ is birational, too. 
	Thus $p\colon Z\to X$ is a proper, birational,
	one-to-one morphism of normal algebraic spaces,
	 hence an isomorphism by Zariski's Main Theorem.
	Now $p'\circ p^{-1}\colon X\to X'$
	is the required morphism $\sigma$.
	It is unique as $X'$ is separated and 
	$\sigma=\varphi\circ \pi^{-1}$ on $\pi(Y\setminus D)$.
\end{proof}

\subsection{Saturated surfaces are proper over their affinisations}
\label{ssec:prop_of_aff}	

\begin{THM}[Theorem \ref{thm:intro_sat_implies_proper}]
\label{thm:proper-surjective}
	Let $X$ be a normal surface with $\dim X^{\aff}>0$. 
    	Then $X$ is saturated if and only if the affinisation morphism 
    	$X\to X^{\aff}$ is proper. 
\end{THM}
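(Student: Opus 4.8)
The ``if'' direction is Proposition \ref{prop:proper-implies-sat}, so assume $X$ is saturated with $\dim X^{\aff}\in\{1,2\}$; the task is to prove that $\alpha_X$ is proper. The plan is to apply Nagata's theorem to the morphism $\alpha_X$ itself, obtaining a normal surface $\ol{X}$ that contains $X$ as a dense open subspace together with a proper morphism $\bar\alpha\colon\ol{X}\to X^{\aff}$ restricting to $\alpha_X$ on $X$. I would first note that $\bar\alpha$ is necessarily the affinisation morphism of $\ol{X}$: the restriction map $\Gamma(\ol{X},\cO_{\ol{X}})\to\Gamma(X,\cO_X)=H^0(X,\cO_X)$ is injective (as $X$ is dense in the reduced space $\ol{X}$) and surjective (since it becomes the identity after precomposition with $\bar\alpha^{\sharp}$, using $\alpha_X^{\sharp}=\Id$), hence an isomorphism. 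Therefore $\bar\alpha_*\cO_{\ol{X}}=\cO_{X^{\aff}}$, so $\bar\alpha$ has geometrically connected fibres and $\ol{X}$ is proper over its affinisation; by Proposition \ref{prop:proper-implies-sat}, $\ol{X}$ is saturated. It then suffices to show that $D\de\ol{X}\setminus X$ is empty, for then $\alpha_X=\bar\alpha$ is proper.

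So suppose $D\neq\emptyset$. For every component $C$ of $D$ and every $n\geq0$, restricting to $X$ (which $C$ does not meet) gives $H^0(\ol{X},\cO_{\ol{X}}(nC))\subseteq H^0(X,\cO_X)=\Gamma(X^{\aff},\cO_{X^{\aff}})$, and moreover this is an inclusion of $\Gamma(X^{\aff},\cO_{X^{\aff}})$-modules. I would use this to rule out that any component of $D$ dominates a positive-dimensional subvariety of $X^{\aff}$. If $\dim X^{\aff}=2$ then $\bar\alpha$ is birational, and a component $C$ with $\bar\alpha(C)$ a curve $\gamma$ would, since $\bar\alpha$ is an isomorphism near the generic point of $\gamma$, satisfy $\cO_{X^{\aff}}\subsetneq\bar\alpha_*\cO_{\ol{X}}(nC)$; as $X^{\aff}$ is affine this forces $\Gamma(X^{\aff},\cO_{X^{\aff}})\subsetneq H^0(\ol{X},\cO_{\ol{X}}(nC))$, a contradiction. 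If $\dim X^{\aff}=1$ then $\bar\alpha$ is a fibration over an affine curve, and an $\bar\alpha$-horizontal component $C$ would make $\bar\alpha_*\cO_{\ol{X}}(nC)$ a torsion-free coherent sheaf of rank $n(C\cdot F)+1-g(F)\geq2$ for $n\gg0$ (generic fibre $F$, by Riemann--Roch), contradicting that $H^0(\ol{X},\cO_{\ol{X}}(nC))$ is a submodule of $\Gamma(X^{\aff},\cO_{X^{\aff}})$ of rank at most one. Hence every component of $D$ is contracted by $\bar\alpha$ to a point.

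Finally I would argue that this cannot happen. If $\dim X^{\aff}=2$, the contracted curves are the exceptional locus of the birational morphism $\bar\alpha$, whose intersection matrix is negative definite, so each connected component of $D$ is negative definite. If $\dim X^{\aff}=1$, a connected component $D^{(i)}$ of $D$, being connected, lies inside a single (connected) fibre $\bar\alpha^{-1}(s_i)$; since $\alpha_X=\bar\alpha|_X$ is surjective (Lemma \ref{lem:surjective-1}) that fibre meets $X$, so $D^{(i)}$ is a \emph{proper} subconfiguration of the connected fibre $\bar\alpha^{-1}(s_i)$ and is therefore negative definite by Zariski's lemma. Either way $D$ acquires a negative definite connected component, which by Artin's criterion (Lemma \ref{lem:intersection-product}\ref{item:D-Artin}) can be contracted; removing the images of the remaining components of $D$ then exhibits $X$ as a big open subspace of a strictly larger surface, contradicting that $X$ is saturated (alternatively, take $\ol{X}$ projective and apply Proposition \ref{prop:saturation-criterion}). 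Thus $D=\emptyset$. The main obstacle is this last case $\dim X^{\aff}=1$: upgrading a negative \emph{semi}-definite fibre to a negative \emph{definite} proper subconfiguration relies on Zariski's lemma together with the surjectivity of $\alpha_X$, and this is exactly what breaks down for $\dim X^{\aff}=0$ (where $\bar\alpha^{-1}(\mathrm{pt})=\ol{X}$ is proper and positive, not negative semi-definite), in agreement with the non-proper saturated surfaces of Examples \ref{ex:non-proper} and \ref{ex:Hironaka}.
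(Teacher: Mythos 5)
Your argument is correct in substance, but it follows a genuinely different route from the paper. The paper first passes to a smooth projective compactification via Chow's lemma and resolution (Proposition \ref{prop:resolution}), compactifies $X^{\aff}$ with an ample boundary $B$, extends $\alpha_X$ to a proper morphism of proper surfaces, and then argues with the pullback $A$ of $B$: in the case $\dim X^{\aff}=2$ it deduces connectedness of $D$ from the Hodge Index Theorem and quotes Schr\"oer's Proposition 3.2 to get properness, while in the case $\dim X^{\aff}=1$ it excludes horizontal boundary components by a nef-and-big argument (a multiple of $H+mA$ would map $X$ birationally to an affine surface, contradicting $\dim X^{\aff}=1$) before applying Zariski's lemma and Proposition \ref{prop:saturation-criterion}. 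You instead compactify \emph{relatively} over $X^{\aff}$ by Nagata, identify $\bar\alpha$ with the affinisation of $\ol{X}$, and exclude horizontal boundary components uniformly by comparing the growth of $H^0(\ol{X},\cO_{\ol{X}}(nC))$ (strict sheaf inclusion over the generic point of the image curve in the birational case, rank growth of $\bar\alpha_*\cO_{\ol{X}}(nC)$ via Riemann--Roch on the generic fibre in the fibration case) with the fact that all such sections restrict injectively into $H^0(X,\cO_X)=\Gamma(X^{\aff},\cO_{X^{\aff}})$; this is a nice, more self-contained replacement for the paper's use of Schr\"oer and of the big-divisor argument. The trade-off is that your $\ol{X}$ is only proper over an affine base, so you cannot literally quote Proposition \ref{prop:saturation-criterion} (your parenthetical ``take $\ol{X}$ projective'' needs the small extra remark that a complete negative definite connected component of $D$ remains a connected component of the boundary in any further compactification), and you rely on two facts the paper does not isolate but which are classical and do hold in this relative setting: Mumford's negative definiteness of the curves contracted by the proper birational $\bar\alpha$ when $\dim X^{\aff}=2$, and Zariski's lemma for fibres of the proper $\bar\alpha$ over the affine curve. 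Two minor loose ends: you should dispose of possible zero-dimensional connected components of $\ol{X}\setminus X$ (they contradict saturatedness immediately), and note that surjectivity of $\alpha_X$, which you use to see that each fibre through a component of $D$ meets $X$, is exactly Lemma \ref{lem:surjective-1} in the case $\dim X^{\aff}=1$ and follows from birationality of $\bar\alpha$ being irrelevant in the case $\dim X^{\aff}=2$ (there you only need Mumford negativity, not surjectivity). With these touch-ups your proof is complete and, apart from the common endgame via Artin's contraction against saturatedness, independent of the paper's argument.
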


\begin{REM}[Necessity of the assumptions in Theorem \ref{thm:proper-surjective}]
\label{rem:properness-assumptions}\ 
	\begin{parlist}
		\item\label{item:ex-non-proper}
			Examples \ref{ex:ruled}, \ref{ex:Serre}, and \ref{ex:Hironaka}\ref{item:Hironaka-0} 
			show that the assumption $\dim X^{\aff}>0$ is necessary, 
			i.e.\ there exist non-proper saturated surfaces with trivial affinisation.
		\item\label{item:ex-scheme} 
			Examples \ref{ex:scheme-sat-2}, \ref{ex:scheme-sat-1}, and \ref{ex:scheme-sat-0} 
			show that if $X$ is only scheme-saturated, 
			then the affinisation morphism $X\to X^{\aff}$
			can be non-proper regardless of the dimension of $X^{\aff}$.
	\end{parlist}
\end{REM}

\begin{proof}[Proof of Theorem \ref{thm:proper-surjective}]
	If $X\to X^{\aff}$ is proper,
    	then $X$ is saturated by Proposition \ref{prop:proper-implies-sat}.
    	Assume that $X$ is saturated.
    	By Proposition \ref{prop:resolution}
    	we can assume that $X$ is an open subspace
	of a smooth projective surface $\ol{X}$,
    	and the complement $D\de \ol{X}\setminus X$ is a divisor.
    	Embed $X^{\aff}$ as a closed subspace of some affine space $\bA^{N}$,
    	let $\ol{Y}$ be its closure in $\bP^N$,
    	and let $B$ be the scheme-theoretic intersection of $\ol{Y}$
    	with the hyperplane at infinity,
    	so $B$ is an ample divisor with support equal to $\ol{Y}\setminus X^{\aff}$.
    	Blowing up over $D$ we can assume that the affinisation morphism
    	$\alpha\colon X\to X^{\aff}$ extends to a proper surjective morphism 
    	$\ol{\alpha}\colon \ol{X}\to \ol{Y}$.
	Replacing $\ol{Y}$ by its integral closure in $\kappa(\ol{X})$,
	we obtain a proper surjective morphism 
	$\ol{\alpha}\colon \ol{X}\to \ol{Y}$ with connected fibres.
	Note that $\ol{\alpha}$ still restricts to $\alpha$ over $X$
	since $X^{\aff}$ is normal in $\kappa(\ol{X})$.

	Consider now the ample divisor $B$ supported on $\ol{Y}\setminus X^{\aff}$ 
	and its pullback $A$ to $\ol{X}$ which
	is a divisor with support contained in $D$. 
	
	Assume $\dim X^{\aff}=2$. Since $B$ is ample, we have $0<B^2=A^2$,
    	and the support of $B$ is connected \cite[Corollary III.7.9]{Har},
    	hence the same holds for $A$.
    	We claim that $D$ is connected.
    	Suppose the contrary, so $D$ has a connected component $D_0$ which is disjoint
    	from the support of $A$.
    	By the Hodge Index Theorem,
	$D_0$ is negative definite,
	hence contractible by Artin's criterion,
	see Lemma \ref{lem:intersection-product}\ref{item:D-Artin}, \ref{item:D-HIT}.
    	Since $X$ is saturated, we get a contradiction 
    	with Proposition~\ref{prop:saturation-criterion}.

    	Therefore, $D$ is connected.
	The implication \enquote{(b)$\implies$(a)} of \cite[Proposition 3.2]{Schroer}
    	shows that the affinisation morphism $X\to X^{\aff}$ is proper, as needed. 
	
	Assume $\dim X^{\aff}=1$. Then $\ol{\alpha}\colon \ol{X}\to\ol{Y}$ 
   	is a morphism onto a smooth curve, and $A$ is a sum of its fibres.
    	In particular, $A^2=0$.
    	Suppose $D$ has an irreducible component $H$ such that 
    	$\ol{\alpha}|_{H}\colon H\to \ol{Y}$ is surjective.
    	Then $H\cdot A>0$, so we can choose an integer $m\gg0$ such that $H\cdot (H+mA)>0$.
    	We have $A\cdot(H+mA)=A\cdot H>0$, so $(H+mA)^2>0$.
    	Thus $H+mA$ is nef and big, see \cite[Theorem 2.2.16]{Laz}.
    	This means that the rational map $\varphi$ induced by some multiple of $|H+mA|$
    	is birational onto its image.
    	Since $X$ is disjoint from the support of $H+mA$,
    	the map $\varphi$ restricts to a birational morphism $X\to Y$ to an affine surface $Y$.
    	By the universal property of the affinisation, $\varphi$ factors through
    	$X^{\aff}$, a contradiction as $\dim X^{\aff}=1$.
    
    	Thus, $D$ is contained in the fibres of $\ol{\alpha}$.
	If a connected component of $D$ is properly contained in a fibre,
    	then it is negative definite by Zariski's lemma,
    	see Lemma \ref{lem:fibre-type}\ref{item:F-proper-subset-is-negative}:
	as before, this is a contradiction with 
	Proposition \ref{prop:saturation-criterion} and Artin's criterion.
    	Hence, every fibre of $\ol{\alpha}$ which meets $X$ is contained in $X$.
    	Thus, we have $X=\ol{\alpha}^{-1}(\alpha(X))=\ol{\alpha}^{-1}(X^{\aff})$
    	since $\alpha(X)=X^{\aff}$ by Lemma \ref{lem:surjective-1}.
    	We conclude that $\alpha\colon X\to X^{\aff}$ is a base change of the proper morphism 
    	$\ol{\alpha}\colon \ol{X}\to \ol{Y}$ along the inclusion 
    	$X^{\aff}\into \ol{Y}$, hence $\alpha$ is proper, as claimed.
\end{proof}

\subsection{Surjectivity  of the affinisation morphism and further remarks}
\label{ssec:surj_of_aff}

Theorem \ref{thm:proper-surjective} and Lemma \ref{lem:surjective-1}
imply that if $X$ is saturated,
then the affinisation morphism $\alpha_{X}\colon X\to X^{\aff}$ is surjective.
It turns out that this weaker property  holds also for scheme-saturated surfaces. 

\begin{PROP} \label{prop:surjective-2}
	Let $X$ be a scheme-saturated surface.
	Then $X\to X^{\aff}$ is surjective.
\end{PROP}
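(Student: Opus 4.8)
The plan is to reduce immediately to the case $\dim X^{\aff}=2$, and in that case to exploit the saturation $\eta_{X}\colon X\into X^{\sat}$ together with Theorem~\ref{thm:proper-surjective} applied to $X^{\sat}$. If $\dim X^{\aff}\leq 1$ there is nothing to prove: Lemma~\ref{lem:surjective-1} already gives that $\alpha_{X}$ is surjective. So assume $\dim X^{\aff}=2$ from now on.

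The first step is to realise $X$ as the schematic locus of its saturated model. Since $X$ is schematic we have $X^{\sch}=X$, and since $X$ is scheme-saturated, uniqueness of scheme-saturation (Lemma~\ref{lem:sat_uniq} and Remark~\ref{rem:scheme-saturated}) gives $X^{\schsat}=X$. Hence Lemma~\ref{lem:sat_and_sch_commute} yields $(X^{\sat})^{\sch}\cong(X^{\sch})^{\schsat}=X$, and under this identification $\eta_{X}$ becomes the inclusion of the schematic locus $(X^{\sat})^{\sch}\into X^{\sat}$. In particular $X^{\sat}\setminus X=X^{\sat}\setminus(X^{\sat})^{\sch}$ is a big open complement in the two-dimensional space $X^{\sat}$, hence is a finite set of closed points. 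Moreover, affinisations are unchanged along big open embeddings (Section~\ref{ssec:def_of_aff}), so $X^{\aff}=(X^{\sat})^{\aff}$, $\alpha_{X}=\alpha_{X^{\sat}}\circ\eta_{X}$, and $\dim(X^{\sat})^{\aff}=2>0$. Since $X^{\sat}$ is saturated, Theorem~\ref{thm:proper-surjective} applies and $\alpha_{X^{\sat}}\colon X^{\sat}\to X^{\aff}$ is proper; being also dominant (Lemma~\ref{lem:surjective-1}), it is surjective.

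The second step is a contradiction argument. Suppose some closed point $p\in X^{\aff}$ is not in the image of $\alpha_{X}$. Then every point of the fibre $\alpha_{X^{\sat}}^{-1}(p)$ lies in $X^{\sat}\setminus X$, so this fibre is finite, and it is non-empty because $\alpha_{X^{\sat}}$ is surjective. Thus $\alpha_{X^{\sat}}$ is quasi-finite along the fibre over $p$; since the quasi-finite locus is open and $\alpha_{X^{\sat}}$ is proper (hence closed), there is an affine open $V\ni p$ such that $W\de\alpha_{X^{\sat}}^{-1}(V)\to V$ is proper and quasi-finite, hence finite. A space finite over an affine scheme is affine, so $W$ is an affine scheme; in particular $W\subseteq(X^{\sat})^{\sch}=X$. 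But $\emptyset\neq\alpha_{X^{\sat}}^{-1}(p)\subseteq W$ while $\alpha_{X^{\sat}}^{-1}(p)\cap X=\emptyset$, a contradiction. Therefore $\alpha_{X}$ is surjective.

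I expect the only delicate point to be the very first reduction, namely checking that $\eta_{X}$ genuinely identifies $X$ with the schematic locus of $X^{\sat}$, so that $X^{\sat}\setminus X$ is finite and Theorem~\ref{thm:proper-surjective} is really available for $X^{\sat}$; this is exactly the content of Lemma~\ref{lem:sat_and_sch_commute}. The remaining ingredients are the standard facts that a proper morphism with a finite fibre is finite over an open neighbourhood of that fibre, and that the schematic locus is stable under affine (here finite) morphisms, which is immediate from its definition.
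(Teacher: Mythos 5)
Your proof is correct, and its skeleton coincides with the paper's: reduce to $\dim X^{\aff}=2$ via Lemma~\ref{lem:surjective-1}, identify $X$ with the schematic locus of $X^{\sat}$ via Lemma~\ref{lem:sat_and_sch_commute}, invoke Theorem~\ref{thm:proper-surjective} to get that $\alpha_{X^{\sat}}\colon X^{\sat}\to X^{\aff}$ is proper (hence, being dominant, surjective), and then show every fibre meets $X$. The only place you diverge is the endgame. The paper notes that $\alpha_{X^{\sat}}$ is birational (since it is proper with $(\alpha_{X^{\sat}})_*\cO_{X^{\sat}}=\cO_{X^{\aff}}$ and both sides are two-dimensional), so each fibre is either a single point --- which must lie in the schematic locus $X$ --- or a divisor, which meets the big open subspace $X$; either way the fibre meets $X$. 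You instead argue by contradiction without using birationality at all: a fibre disjoint from $X$ would be finite (being contained in the finite set $X^{\sat}\setminus X$) and non-empty, so by openness of the quasi-finite locus and \enquote{proper $+$ quasi-finite $\Rightarrow$ finite} the map would be finite over an affine neighbourhood of the image point, making the preimage an affine open subscheme of $X^{\sat}$, hence contained in $(X^{\sat})^{\sch}=X$ --- contradicting disjointness. This variant is slightly more self-contained: it treats any finite fibre uniformly and in effect makes explicit the step the paper leaves implicit (why a point-fibre lies in the schematic locus), at the cost of invoking the standard openness and finiteness facts for algebraic spaces; the paper's version is shorter because birationality immediately gives the point/divisor dichotomy. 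No gaps.
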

\begin{proof} 
	If $\dim X^{\aff}\leq 1$, then $X\to X^{\aff}$ is surjective by Lemma \ref{lem:surjective-1}. 
	Assume $\dim X^{\aff}=2$.
	Since $X$ is scheme-saturated, Proposition \ref{prop:sat_and_sch_commute}
    	implies that the image of $X$ in its saturation $X^{\sat}$
    	coincides with the schematic locus of $X^{\sat}$.
    	Because $X$ is a big open subspace of $X^{\sat}$,
    	the affinisations of $X$ and $X^{\sat}$ agree, i.e. we have a commutative diagram
	\begin{equation*}
		\begin{tikzcd}
			X=(X^{\sat})^{\sch} \ar[rr,hook] \ar[rd] && X^{\sat} \ar[ld] \\ 
			& X^{\aff}=(X^{\sat})^{\aff}, & 
		\end{tikzcd}
	\end{equation*}
	where the slant arrows are the affinisation morphisms.
	By Theorem \ref{thm:proper-surjective},
	the affinisation morphism $\alpha\colon X^{\sat}\to X^{\aff}$ is proper.
	Since $\alpha_{*}\cO_{X^{\sat}}= \cO_{X^{\aff}}$, it is birational.
	Hence, any fibre $F$ of $\alpha$ is either a point in the schematic locus $X\subseteq X^{\sat}$,
	or a divisor, which meets the big open subspace $X$.
	In any case, $F$ meets $X$, so the point $\alpha(F)$ lies in $\alpha(X)$, as needed. 
\end{proof}
\begin{REM}[The affinisation is the restriction of the Iitaka fibration]\label{rem:Iitaka}
	Assume that $X$ is the complement of a reduced divisor $D$
	on a smooth projective surface $\ol{X}$ 
	(recall that we can always pass to this setting via Proposition \ref{prop:resolution}).
	Then \cite[Proposition 10.1]{Iitaka} shows that $\dim X^{\aff}$ is equal to the Iitaka dimension $\kappa(\ol{X},D)$,
	that is, the maximum of the dimensions of the images of the maps $\varphi_{m}$ induced by the linear systems $|mD|$, for all $m>0$. 
	
	We claim that the affinisation morphism $\alpha\colon X\to X^{\aff}$
	is the restriction of the Iitaka fibration $\varphi_{\infty}\colon \ol{X}\map \bP^N$ for $D$,
	which by definition is a common birational model of the maps $\varphi_m$ for all $m\gg 0$,
	see \cite[Theorems 8.6, 10.3]{Iitaka} or \cite[Theorem 2.1.33]{Laz}. 
	
	Without loss of generality,
	we can assume that $D$ has no negative definite connected components,
	i.e.\ $X$ is saturated, see Proposition \ref{prop:saturation-criterion}.
	Indeed, let $C$ be a negative definite connected component of $D$.
	Then $mC$ lies in the fixed part of $|mD|$, see e.g.\ \cite[Proposition 2.3.21]{Laz},
	hence the Iitaka fibrations for $D$ and $D-C$ coincide.
	In turn, since $X$ is a big open subspace of the algebraic space obtained from 
	$X'\de \ol{X}\setminus (D-C)$ by contracting $C$,
	we see that the affinisations of $X$ and $X'$ coincide, as needed.
	
	If $\dim X^{\aff}=0$, then both $\alpha$ and $\varphi_{\infty}$ are constant.
	Assume $\dim X^{\aff}>0$. The Iitaka fibration $\varphi_{\infty}$
	is regular on $X$ and maps it to a complement of a hyperplane section, which is affine.
	Hence its restriction to $X$ factors through $\alpha\colon X\to X^{\aff}$.
	On the other hand, blowing up over $D$ we can assume that $\alpha$
	extends to a morphism $\ol{\alpha}$ from $\ol{X}$,
	and since $\alpha$ is proper by Theorem \ref{thm:proper-surjective},
	for each $x\in X^{\aff}$ the fibre $\alpha^{-1}(x)=\ol{\alpha}^{-1}(x)$ is disjoint from $D$.
	Hence, $\kappa(D,\ol{\alpha}^{-1}(x)|_{D})=0$,
	so by \cite[Remark 2.1.35]{Laz},
	$\ol{\alpha}$ factors through the Iitaka fibration, as claimed.
	
	The above discussion shows that the Iitaka dimension $\kappa(\ol{X},D)$
	equals the transcendence degree of the fraction field of the ring $H^{0}(X,\cO_{X})$.
	By \cite[Proposition 10.1]{Iitaka}, this equality holds in higher dimension, too;
	even though the ring $H^{0}(X,\cO_{X})$ may not be finitely generated.
\end{REM}
	
\section{Saturated open subspaces}\label{sec:sat_open_sub}
	
\subsection{Numerical properties of boundaries of saturated surfaces}\label{ssec:num_boundary}	

We now describe the boundaries of compactifications 
of saturated surfaces as outlined in the introduction.
Our result is summarized in Proposition \ref{prop:numerics} below.
To state it in a convenient way,
we pose Definition \ref{def:fibre-type} which abstracts numerical properties of a fibre of a fibration.
In Lemmas \ref{lem:fibre-type} and \ref{lem:FF'}
we list its basic consequences:
they are well known, cf.\ e.g.\ \cite[Lemmas 4, 8]{Sakai-D-dimension},
but we gather the proofs for the readers' convenience.
A \emph{fibration} is a morphism with connected fibres from a surface onto a smooth curve. 
We say that a divisor $F$ is \emph{supported on} a reduced divisor $D$,
or that $D$ \emph{supports} $F$, if $F\redd=D$, i.e. the support of $F$ equals $D$.

\begin{DEF}\label{def:fibre-type}
	We say that a non-zero, proper, reduced Weil divisor on a surface is 
    	\emph{of fibre type} if it is connected and negative semi-definite,
    	but not negative definite (see Section \ref{ssec:intersection form}). 
    	We say that a divisor of fibre type is a \emph{false fibre}
    	if it does not support a fibre of a fibration.
\end{DEF}

Zariski's lemma \cite[Corollary 2.6]{Badescu} 
implies that a fibre of a proper fibration is supported on a divisor of fibre type.
In particular, it has the following numerical properties.

\begin{LEM}\label{lem:fibre-type}
	Let $D$ be a divisor of fibre type on a surface $X$. Then the following hold.
	\begin{enumerate} 
		\item\label{item:F-exists} 
			There is a non-zero divisor $F$ with support contained in $D$ such that $F^2=0$.
		\item\label{item:Fred=D} 
			We have $F\redd=D$ (i.e.\ $F$ is supported on the entire $D$), 
			$F\cdot C=0$ for every irreducible component $C$ of $D$,
			and replacing $F$ by $-F$ if needed, we can assume that $F$ is effective.
		\item\label{item:F-proper-subset-is-negative}
			Every non-zero divisor with support properly contained in $D$ is negative definite.
		\item\label{item:F-unique}
			The divisor $F$ as in \ref{item:F-exists} is unique up to taking multiples.	
		\item\label{item:F-pullback}
			Let $\pi\colon Y\to X$ be a proper birational morphism of surfaces.
			Then the preimage of $D$ on $Y$ is a divisor of fibre type.
			It is a false fibre if and only if $D$ is a false fibre.
        \item\label{item:F-extends}
			Let $X\into \ol{X}$ be an open embedding of surfaces.
			If $D$ is a fibre of a fibration $f\colon X\to B$,
			then $f$ extends to a fibration $\ol{f}\colon \ol{X}\to \ol{B}$
			for some open embedding $B\into \ol{B}$.
			In particular, a false fibre on $\ol{X}$ 
			which is contained in $X$ is also a false fibre on $X$.
	\end{enumerate} 	
\end{LEM}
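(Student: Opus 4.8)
Write $D=D_{1}+\dots+D_{n}$ with irreducible components $D_{i}$, and let $M=[D_{i}\cdot D_{j}]$ be its intersection matrix; by hypothesis $M$ is symmetric, negative semi-definite but not negative definite, has non-negative off-diagonal entries, and is irreducible because $D$ is connected. Parts \ref{item:F-exists}--\ref{item:F-unique} are the classical facts about such matrices (Zariski's lemma). For \ref{item:F-exists}: a semi-definite but not definite form has non-trivial radical, and for a semi-definite form $v^{2}=0$ is equivalent to $Mv=0$; the radical is defined over $\bQ$, so after clearing denominators we get a non-zero integral divisor $F$ with $F^{2}=0$, equivalently $F\cdot D_{i}=0$ for all $i$. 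For \ref{item:Fred=D}: decompose $F=F_{+}-F_{-}$ into positive and negative parts (with disjoint component sets); from $0=F^{2}=F_{+}^{2}+F_{-}^{2}-2F_{+}\cdot F_{-}$ with $F_{\pm}^{2}\leq 0$ and $F_{+}\cdot F_{-}\geq 0$ one gets $F_{\pm}^{2}=0=F_{+}\cdot F_{-}$, hence $F_{\pm}\cdot D_{i}=0$ for all $i$; connectedness of $D$ then forces each non-zero $F_{\pm}$ to be supported on all of $D$ (else some $D_{j}$ meeting $\Supp F_{+}$ would give $F_{+}\cdot D_{j}>0$), so only one of them is non-zero, yielding after a sign change an effective $F$ with $F\redd=D$ and $F\cdot C=0$ for every component $C$. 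Part \ref{item:F-proper-subset-is-negative}: a non-zero $G$ with $\Supp G\subsetneq D$ and $G^{2}=0$ would satisfy $MG=0$, and the same decomposition argument would force $\Supp G=D$; so $G^{2}<0$. Part \ref{item:F-unique}: given two such divisors $F,F'$, by \ref{item:Fred=D} both may be taken effective and fully supported on $D$; choosing $\lambda>0$ with $F'-\lambda F$ effective but with a vanishing coefficient and noting $M(F'-\lambda F)=0$, part \ref{item:Fred=D} forces $F'-\lambda F=0$.

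For \ref{item:F-pullback}: $\pi$ is proper birational between normal surfaces, so $\pi_{*}\cO_{Y}=\cO_{X}$ and $\pi$ has connected fibres; hence $\pi^{-1}(D)$ is connected and proper, and clearly a divisor. By Lemma \ref{lem:intersection-product}\ref{item:D-pullback-def}, $\Supp\pi^{*}D=\pi^{-1}(D)$ and $D$ is negative (semi-)definite iff $\pi^{*}D$ is; since $D$ is negative semi-definite but not negative definite, so is $\pi^{-1}(D)$, which is therefore of fibre type. For the false-fibre equivalence: if $D$ supports a fibre of a fibration $g\colon X\to C$, then $g\circ\pi$ is again a fibration (its fibres are $\pi$-preimages of the connected fibres of $g$) and $\pi^{-1}(D)$ supports its fibre over $g(D)$. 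Conversely, if $\pi^{-1}(D)$ supports a fibre of a fibration $h\colon Y\to C$, then $h$ contracts every $\pi$-exceptional curve $E$: otherwise $h|_{E}$ is surjective onto $C$, so $E$ meets the fibre supported on $\pi^{-1}(D)$, forcing $E\cap\pi^{-1}(D)\neq\emptyset$, which is impossible since $\pi(E)$ is a point either lying in $D$ (then $E\subseteq\pi^{-1}(D)$ is contracted, a contradiction) or not (then $E\cap\pi^{-1}(D)=\emptyset$). Thus $h$ is constant on the connected fibres of $\pi$ and, by the rigidity argument of Lemma \ref{lem:contractions-are-universal}, factors as $g\circ\pi$ for a morphism $g\colon X\to C$; this $g$ is a fibration and $D$ supports its fibre over the relevant point.

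Part \ref{item:F-extends} will be the main obstacle. Let $\ol{B}$ be the smooth projective model of $B$, so $B\into\ol{B}$ is open and $f$ induces a rational map $g\colon\ol{X}\map\ol{B}$ regular on $X$, whose indeterminacy locus is a finite subset of $\ol{X}\setminus X$. Choose a proper birational $\sigma\colon W\to\ol{X}$ from a normal surface, an isomorphism over $X$, resolving this indeterminacy, so that $g_{W}\de g\circ\sigma\colon W\to\ol{B}$ is a morphism. Stein-factoring $g_{W}=W\xrightarrow{h}\ol{C}\xrightarrow{\nu}\ol{B}$ and using that the fibres of $f=g_{W}|_{X}$ are connected, a point-count on a general fibre of $\nu$ over $B$ shows $\deg\nu=1$, so $g_{W}$ is a fibration. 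Now set $F\de f^{*}(b_{0})$ (effective on $X=\sigma^{-1}(X)$, with $F\redd=D$), and write the fibre $g_{W}^{*}(b_{0})=F+G$ with $G\geq 0$ supported on $W\setminus X$. Since the components of $g_{W}^{*}(b_{0})$ are contracted by $g_{W}$ we have $g_{W}^{*}(b_{0})\cdot C=0$ for every proper component $C$; applied to the components of $F$ and combined with $F^{2}=0$ (Remark \ref{rem:intersection-form}) this gives $F\cdot G=0$, and comparing intersection numbers with the components of $D$ forces $D$ to be disjoint from $G$. But $g_{W}^{*}(b_{0})\redd=D\cup G\redd$ is connected, so $G=0$ and $\sigma(g_{W}^{-1}(b_{0}))=D\subseteq X$. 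On the other hand, over any indeterminacy point $p$ of $g$ there is an exceptional curve of $\sigma$ dominating $\ol{B}$, which therefore meets $g_{W}^{-1}(b_{0})$, so $p\in\sigma(g_{W}^{-1}(b_{0}))=D\subseteq X$, contradicting $p\notin X$. Hence $g\colon\ol{X}\to\ol{B}$ is a morphism, with connected fibres $\sigma(g_{W}^{-1}(b))$, i.e.\ a fibration $\ol{f}\de g$ extending $f$ with $\ol{f}^{*}(b_{0})\redd=D$. The ``in particular'' then follows: were $D$ a false fibre on $\ol{X}$ it could not be a fibre of $\ol{f}$, so no fibration on $X$ has $D$ as a fibre either.

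The linear-algebra parts \ref{item:F-exists}--\ref{item:F-unique} are routine once the decomposition $F=F_{+}-F_{-}$ is in hand, and \ref{item:F-pullback} combines Lemma \ref{lem:intersection-product} with rigidity of contractions; the real work is in \ref{item:F-extends}, specifically the elimination of indeterminacy over $\ol{X}\setminus X$, the degree-one Stein argument, and the verification that the fibre of $g_{W}$ over $b_{0}$ acquires no extra component meeting $D$.
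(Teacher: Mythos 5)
Your parts \ref{item:F-exists}--\ref{item:F-pullback} are correct and essentially reproduce the paper's own arguments: \ref{item:F-exists}--\ref{item:F-unique} are the standard Zariski-lemma computations (you use the radical property of a negative semi-definite form where the paper uses the $(mF_{+}+C)^2$ trick, and in \ref{item:F-unique} you cancel a coefficient of $F'-\lambda F$ where the paper squares $aF-bF'$ — equivalent arguments; in \ref{item:F-unique} the clean reference is \ref{item:F-proper-subset-is-negative} rather than \ref{item:Fred=D}), and your proof of \ref{item:F-pullback}, including the descent via Lemma \ref{lem:contractions-are-universal}, is exactly the paper's.

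Part \ref{item:F-extends} is where you genuinely diverge. The paper Nagata-compactifies $f$ relatively over $\ol{B}$ to a proper $\wt{f}\colon\wt{X}\to\ol{B}$, compares $\wt{X}$ with $\ol{X}$ by a proper birational $\pi\colon\wt{X}\to\ol{X}$ restricting to the identity on $X$, argues that the $\pi$-exceptional curves are $\wt{f}$-vertical, and descends $\wt{f}$ by Lemma \ref{lem:contractions-are-universal}. You instead extend $f$ to a rational map $g\colon\ol{X}\map\ol{B}$, resolve on the $\ol{X}$ side, show that the fibre of $g_W$ over $b_0$ is exactly $D$, and then rule out indeterminacy points because an exceptional curve over one would be horizontal and would have to meet that fibre inside $X$. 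This is a sound alternative, and your explicit analysis of the fibre over $b_0$ spells out a point the paper treats quite tersely; but three steps need shoring up. First, Stein factorisation requires $g_W$ to be proper, i.e.\ $W$ (equivalently $\ol{X}$) proper; the statement only assumes an open embedding $X\into\ol{X}$, so you must begin, as the paper does, by replacing $\ol{X}$ with a normal compactification — as written, the Stein step has no justification for a non-proper $\ol{X}$. Second, your point count giving $\deg\nu=1$ does not follow from connectedness of the fibres of $f$ alone: a general fibre $g_W^{-1}(b)$ could a priori have further connected components lying entirely in $W\setminus X$. You need the extra observation that $W\setminus X$ has only finitely many one-dimensional irreducible components, so for all but finitely many $b$ every component of $g_W^{-1}(b)$ meets $X$, whence $g_W^{-1}(b)=\overline{f^{-1}(b)}$ is connected. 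Third, $F^{2}=0$ is not given by Remark \ref{rem:intersection-form}; justify it (and simultaneously $G\cdot D_i=0$, hence the disjointness of $D$ and $G$) by noting that $F=f^{*}(b_{0})$ is Cartier near $D$ and each $D_i$ is contracted by $f$, so $F\cdot D_i=\deg\bigl((f|_{D_i})^{*}\cO_{B}(b_{0})\bigr)=0$. With these repairs your route to \ref{item:F-extends} goes through.
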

\begin{proof}
	\ref{item:F-exists} This follows from directly from Definition \ref{def:fibre-type}. 
	
	\ref{item:Fred=D} Choose one divisor $F$ as in \ref{item:F-exists}.
    	Write $F=F_{+}-F_{-}$, where $F_{+}$,
    	$F_{-}$ are effective and have no common components.
    	Replacing $F$ by $-F$ if needed, we can assume $F_{+}\neq 0$.
    	We have $F_{+}\cdot F_{-}\geq 0$ and $F_{\pm}^2\leq 0$ since $D$
    	is negative semi-definite.
    	Hence $0=F^2=F_{+}^2-2F_{+}\cdot F_{-}+F_{-}^2\leq 0$, so $F_{+}^2=0$.
    	Suppose $D$ has an irreducible component $C$ such that $F_{+}\cdot C\neq 0$.
    	Then we have $(mF_{+}+C)^2=mF_{+}\cdot C+C^2>0$ for $m\gg 0$ or $m\ll 0$,
    	contrary to the fact that $D$ is negative semi-definite.
    	Since $D$ is connected, we get 
    	$(F_{+})\redd=D$ and $F_{+}\cdot C=0$ for all $C$, as needed.
    
    	\ref{item:F-proper-subset-is-negative}
    	Suppose $Z\subsetneq D$ is not negative definite.
    	Since $D$ is negative semi-definite, so is $Z$, hence 
    	there is a divisor $F_{Z}\neq 0$ with $(F_Z)\redd\subseteq Z$, $F_Z^2=0$.
	Part \ref{item:Fred=D} gives $(F_Z)\redd=D$, a contradiction. 
	
	\ref{item:F-unique} 
	Let $F,F'$ be divisors as in \ref{item:F-exists};
	by \ref{item:Fred=D} we can assume that they are effective.
	If $F$ is not a multiple of $F'$, 
	then there are integers $a,b>0$ such that the divisor $aF-bF'$ is non-zero 
    	and has support strictly contained in $D$.
    	Part \ref{item:F-proper-subset-is-negative} implies that 
	$0>(aF-bF')^2=-2abF\cdot F'$,
    	so $F\cdot F'>0$.
	But then $(F+F')^2=2F\cdot F'>0$,
    	a contradiction as $D$ is negative semi-definite.
	
	\ref{item:F-pullback} Let $D'$ be the preimage of $D$ on $Y$.
	By Lemma \ref{lem:intersection-product}\ref{item:D-pullback-def},
	$D'$ is the support of $\pi^{*} D$,
	which is negative semi-definite but not negative definite.
	Hence $D'$ is of fibre type. 
	If $D$ supports a fibre of a fibration $g\colon X\to B$,
	then $D'$ supports a fibre of the fibration $g\circ \pi\colon Y\to B$.
	Conversely, assume that $D'$ supports a fibre $F$ of a fibration $f\colon Y\to B$.
	We claim that the exceptional divisor of $\pi$ is contained in the fibres of $f$.
	Suppose the contrary, so for some exceptional curve $H$, the restriction $f|_{H}\colon H\to B$ is dominant.
	Since $H$ is proper, $f|_{H}\colon H\to B$ is surjective, so $H$ meets the fibre $F$.
	It follows that the point $\pi(H)$ lies on $\pi(F\redd)=D$,
	so $H\subseteq D'=F\redd$, a contradiction.
	The above claim and Lemma \ref{lem:contractions-are-universal}
	show that $f$ descends to a fibration of $X$ 
	with one of the fibres supported on $D$, as needed.
    
    	\ref{item:F-extends} Replacing $\ol{X}$ by its normal compactification,
	we can assume that $\ol{X}$ is proper.
	Let $\ol{B}$ be a normal compactification of $B$.
	By the Nagata compactification theorem,
	the composition of $f\colon X\to B$ with the open embedding $B\into \ol{B}$
	factors as an open embedding $X\into \wt{X}$ and a proper morphism $\wt{f}\colon \wt{X}\to \ol{B}$.
	Replacing $\wt{X}$ by its normalization and 
	resolving the indeterminacy of the birational map 
	$\begin{tikzcd}[cramped,column sep=small]\wt{X} & \ar[l,hook'] X\ar[r,hook] & \ol{X}\end{tikzcd}$,
	we get a proper birational morphism of surfaces 
	$\pi\colon \wt{X}\to \ol{X}$ which is the identity on $X$.
	In particular, $D$ supports a fibre of $\wt{f}$.
	Since $\pi$ is an isomorphism on a neighbourhood $X$ of $D$,
	the exceptional divisor of $\pi$ is contained in the fibres of $\wt{f}$.
	By Lemma \ref{lem:contractions-are-universal},
	$\wt{f}$ factors through the required fibration $\ol{X}\to \ol{B}$. 
\end{proof}

\begin{LEM}\label{lem:FF'}
	Let $D_1$ be a divisor of fibre type on a proper surface $\ol{X}$.
	Let $D_2$ be a reduced divisor on $\ol{X}$
	which is disjoint from $D_1$ and not negative definite.
	Then the following hold.
	\begin{enumerate}
		\item \label{item:FF'-fiber-type} 
			The divisor $D_2$ is of fibre type, too. 
		\item \label{item:FF'-num-prop} 
			For $j=1,2$ let $F_j$ be a divisor of self-intersection zero supported on $D_j$,
			see Lemma \ref{lem:fibre-type}.
			Then $F_1$ and $F_2$ are numerically proportional,
			i.e. there is a non-zero number $c\in \mathbb{Q}$ 
			such that $F_1\cdot C=c\cdot F_2\cdot C$ for every proper curve $C$ on $\ol{X}$.
		\item \label{item:FF'-false-fibre} If $D_1$ is a false fibre, then so is $D_2$.
	\end{enumerate}
\end{LEM}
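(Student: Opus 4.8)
The plan is to reduce everything to the Hodge Index Theorem on the proper surface $X$, with the intersection form of Remark \ref{rem:intersection-form}. Since $D_1$ is of fibre type, Lemma \ref{lem:fibre-type}\ref{item:F-exists}--\ref{item:Fred=D} supplies an effective divisor $F_1$ with $(F_1)\redd=D_1$, $F_1^2=0$, and $F_1\cdot C=0$ for every component $C$ of $D_1$; by Lemma \ref{lem:intersection-product}\ref{item:D-HIT} its class is not numerically trivial. The elementary input I will invoke twice is the following consequence of the Hodge Index Theorem (signature $(1,\rho-1)$): the orthogonal complement of a non-zero isotropic class $v$ is negative semi-definite with radical exactly $\mathbb{Q}v$, so any isotropic class orthogonal to $v$ is a rational multiple of $v$.

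For \ref{item:FF'-fiber-type}: as $D_2$ is disjoint from $D_1\supseteq\Supp F_1$, every component of $D_2$, hence every divisor supported on $D_2$, is orthogonal to $F_1$, and so has non-positive self-intersection by the remark above. Thus $D_2$ is negative semi-definite; being reduced, connected and not negative definite, it is of fibre type, and Lemma \ref{lem:fibre-type}\ref{item:F-exists}--\ref{item:Fred=D},\ref{item:F-unique} furnishes an effective divisor $F_2$, unique up to scaling, with $(F_2)\redd=D_2$, $F_2^2=0$, $F_2\cdot C=0$ for all components $C$ of $D_2$, and with class not numerically trivial.

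For \ref{item:FF'-num-prop}: we may take the $F_j$ to be the effective representatives above. Disjointness of $D_1$ and $D_2$ gives $F_1\cdot F_2=0$, and since $F_2^2=0$ while $[F_2]$ is non-trivial, the isotropic-vector fact forces $F_2\equiv\lambda F_1$ for some $\lambda\in\mathbb{Q}\setminus\{0\}$. Hence $F_1\cdot C=\lambda^{-1}(F_2\cdot C)$ for every divisor, in particular every proper curve $C$; this is the assertion with $c=\lambda^{-1}$.

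For \ref{item:FF'-false-fibre}: argue by contradiction, assuming $D_2$ is not a false fibre. Being of fibre type by \ref{item:FF'-fiber-type}, it supports a fibre of a fibration $f\colon X\to B$, and by Lemma \ref{lem:fibre-type}\ref{item:F-unique} the divisor $F_2$ is a non-zero multiple of $f^{*}b_0$ for some $b_0\in B$; by \ref{item:FF'-num-prop}, $F_1$ is then numerically proportional to every fibre $f^{*}b$. Since $F_1\cdot C=0$ for every component $C$ of $D_1$, it follows that $C\cdot f^{*}b=0$, so $C$ cannot dominate $B$ and is contracted by $f$. As $D_1$ is connected, $f(D_1)$ is a point $b_1$, whence $D_1\subseteq f^{-1}(b_1)=\Supp(f^{*}b_1)$. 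If this inclusion were strict, Lemma \ref{lem:fibre-type}\ref{item:F-proper-subset-is-negative} applied to the fibre-type divisor $\Supp(f^{*}b_1)$ would make $D_1$ negative definite, contradicting that it is of fibre type; hence $D_1=\Supp(f^{*}b_1)$ supports a fibre of $f$, contradicting that $D_1$ is a false fibre. So $D_2$ is a false fibre. The substantive points are packaging the Hodge-index input so that it powers both \ref{item:FF'-fiber-type} and \ref{item:FF'-num-prop}, and, in \ref{item:FF'-false-fibre}, upgrading ``numerically proportional to a fibre'' to ``literally supports a fibre'' via the contraction argument above; the rest is bookkeeping with Lemma \ref{lem:fibre-type}.
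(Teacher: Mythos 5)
Your proof is correct and follows essentially the same route as the paper: all three parts rest on the Hodge Index Theorem (Lemma \ref{lem:intersection-product}\ref{item:D-HIT}) combined with Lemma \ref{lem:fibre-type}, with disjointness giving $F_1\cdot F_2=0$ in part (b), and part (c) reduced to showing that $D_1$ lies in a single fibre whose support must then equal $D_1$ by Lemma \ref{lem:fibre-type}\ref{item:F-proper-subset-is-negative}. The only cosmetic differences are that the paper proves (b) by diagonalising the intersection form instead of your isotropic-complement formulation of the Hodge Index Theorem, and in (c) it shows the components of $D_1$ are contracted by the direct observation that a component dominating the base would meet the fibre supported on $D_2$, rather than by your numerical argument via proportionality with the fibre class.
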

\begin{proof}
	\ref{item:FF'-fiber-type}
	Suppose the contrary, so $G^2>0$ for some divisor $G$ with $G\redd\subseteq D_2$.
	Then $G\redd$ is disjoint from $D_1$, so by the Hodge Index Theorem,
	$D_1$ is negative definite, a contradiction. 
	
	\ref{item:FF'-num-prop} By Lemma \ref{lem:fibre-type}\ref{item:Fred=D}
	we can assume that $F_1,F_2$ are effective.
	The Hodge Index Theorem gives a basis $B_{1},\dots,B_{\rho}$ of
	$\operatorname{NS}_{\mathbb{Q}}(\ol{X})$ such that $B_{1}^2>0$, $B_{i}^2<0$ for $i>0$,
	and $B_{i}\cdot B_{j}=0$ for all $i\neq j$. In $\operatorname{NS}_{\mathbb{Q}}(\ol{X})$,
	write $F_{j}=\sum_{i=1}^{\rho} a_{ij}B_{i}$ 
	for some $a_{ij}\in \mathbb{Q}$, $j=1,2$. 
	If $a_{1j}=0$, then $0= F_{j}^2=\sum_{i>1} a_{ij}^2B_i^2$,
	so $a_{ij}=0$ for all $i$, which is impossible since $F_j$ is effective,
	see Lemma~\ref{lem:intersection-product}\ref{item:D-HIT}.
	Thus, replacing $F_1,F_2$ by multiples, we can assume $a_{11}=a_{12}$. 
	Since the supports of $F_1$ and $F_2$ are disjoint,
	we have $F_1\cdot F_2=0$,
	so $0 = (F_1-F_2)^2=\sum_{i>1}(a_{i1}-a_{i2})^2\cdot B_{i}^2$. 
	We conclude that $a_{i1}=a_{i2}$ for all $i$,
	so $F_1$ is numerically equivalent to $F_2$, as needed.
	
	\ref{item:FF'-false-fibre}
	Suppose $D_2$ supports a fibre of a fibration $f\colon \ol{X}\to B$.
	If $D_1$ has a component $H$ such that $f|_{H}\colon H\to B$ is dominant,
	then $f(H)=B$ since $H$ is proper, so $H$ meets $D_2$,
	which is impossible because $D_1$ and $D_2$ are disjoint.
	Thus, $D_1$ is contained in the fibres of $f$.
	Since $D_1$ is not negative definite, Lemma \ref{lem:fibre-type}\ref{item:F-proper-subset-is-negative}
	implies that $D_1$ supports a fibre of $f$, as needed.
\end{proof}

\begin{PROP}\label{prop:numerics}
	Let $X$ be a saturated surface.
    	Let $\ol{X}$ be a proper surface containing $X$ as an open subspace 
    	and let $D=\ol{X}\setminus X$.
    	Then $D$ is a divisor whose connected components are not negative definite,
    	and the following hold.
	\begin{enumerate}
		\item\label{item:numerics-2}
			 We have $\dim X^{\aff}=2$ if and only if $D$ is not negative semi-definite.
			 In this case $D$ is connected and supports an effective nef divisor $B$ which satisfies $B^2>0$. 
		\item\label{item:numerics-0-1}
			 We have $\dim X^{\aff}\leq 1$ if and only if every connected component of $D$ is of fibre type. 
		\item\label{item:numerics-1}
			 We have $\dim X^{\aff}=1$ if and only if there is a fibration $\ol{\alpha}\colon \ol{X}\to B$
			 such that $D$ supports a non-empty union of some of its fibres.
			 In this case, one can choose $\ol{\alpha}$ so that it restricts to the affinisation morphism $X\to X^{\aff}$. 
		\item\label{item:numerics-0}
			 We have $\dim X^{\aff}=0$ if and only if $D$ is a sum of disjoint false fibres.
	\end{enumerate} 	 
\end{PROP}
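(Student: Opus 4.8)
The plan is to reduce to the case where $\ol X$ is smooth projective and then combine Theorem~\ref{thm:proper-surjective}, the Hodge Index Theorem, and the combinatorics of fibre-type divisors recorded in Lemmas~\ref{lem:fibre-type} and~\ref{lem:FF'}. First, $D$ is a divisor with no negative definite connected component by Proposition~\ref{prop:saturation-criterion}. Next, for an arbitrary proper $\ol X$ I pass to a resolution $\rho\colon\ol X'\to\ol X$ with $\ol X'$ smooth projective (Proposition~\ref{prop:resolution}); replacing $X$ by $\rho^{-1}(X)$ changes neither $\dim X^{\aff}$ nor saturatedness (Proposition~\ref{prop:resolution}\ref{item:resolution-aff-sat}), and each of the four conditions on the boundary is transported along $\rho$ by Lemma~\ref{lem:intersection-product}\ref{item:D-pullback-def} (negative semidefiniteness, applied componentwise) and Lemma~\ref{lem:fibre-type}\ref{item:F-pullback} (``fibre type'', ``false fibre'', and hence ``$D$ supports a union of fibres''), the descent of fibrations being handled by Lemma~\ref{lem:contractions-are-universal} (their exceptional curves are contracted, since they lie over $D$ or inside a fibre of the affinisation, cf.\ Lemma~\ref{lem:resolution-aff}). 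So from now on $\ol X$ is smooth projective; recall that $\alpha\colon X\to X^{\aff}$ is proper when $\dim X^{\aff}>0$ (Theorem~\ref{thm:proper-surjective}) and that $\dim X^{\aff}=\kappa(\ol X,D)$ (Remark~\ref{rem:Iitaka}).

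For part~\ref{item:numerics-2}: if $\dim X^{\aff}=2$ then $\kappa(\ol X,D)=2$, so some effective divisor supported on $D$ is big and its Zariski positive part is a nonzero $\bQ$-divisor supported on $D$ with positive self-intersection, so $D$ is not negative semidefinite. Conversely, if $D$ is not negative semidefinite, a Perron--Frobenius argument applied to the intersection matrix of a suitable connected component yields an effective $G$ supported on $D$ with $G^2>0$, hence big, so $\kappa(\ol X,D)=2$. For the extra structure, the Perron--Frobenius eigenvector has strictly positive entries, so $G$ may be taken with $\Supp G=D$, and since then $G\cdot D_i>0$ for every component $D_i$ of $D$ while $G\cdot C\geq0$ for every other curve $C$, this $G$ is also nef; connectedness of $D$ is exactly the argument of the $\dim X^{\aff}=2$ case in the proof of Theorem~\ref{thm:proper-surjective} (any component of $D$ disjoint from the support of the pullback of an ample divisor on the compactification of $X^{\aff}$ lies in the negative definite orthogonal complement of that pullback, contradicting saturatedness). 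Part~\ref{item:numerics-0-1} is then formal: by~\ref{item:numerics-2}, $\dim X^{\aff}\leq1$ iff $D$ is negative semidefinite, in which case each connected component of $D$ is connected, negative semidefinite and --- by saturatedness --- not negative definite, i.e.\ of fibre type; the converse holds since the intersection matrix of $D$ is block diagonal with negative semidefinite blocks.

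Assume now $\dim X^{\aff}\leq1$, so $D=C_1\sqcup\dots\sqcup C_k$ with each $C_i$ of fibre type (Definition~\ref{def:fibre-type}). By Lemma~\ref{lem:FF'}\ref{item:FF'-false-fibre}, if one $C_i$ is a false fibre then so is every $C_j$ (it is disjoint from $C_i$ and not negative definite); hence either \emph{(I)} every $C_i$ is a false fibre, so $D$ is a sum of disjoint false fibres, or \emph{(II)} no $C_i$ is a false fibre, so each $C_i$ supports a fibre $F_i$ of a fibration, the $F_i$ are pairwise numerically proportional (Lemma~\ref{lem:FF'}\ref{item:FF'-num-prop}), and --- a fibration being determined by the numerical class of its fibres --- all these fibrations coincide with a single $\ol\alpha\colon\ol X\to B$, whence $D=\ol\alpha^{-1}(T)\redd$ for a nonempty finite $T\subseteq B$. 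In case~(II) we get $X=\ol\alpha^{-1}(B\setminus T)$, so $\ol\alpha|_X$ is proper and dominant onto the affine curve $B\setminus T$; as it factors through the affinisation, $\dim X^{\aff}\geq1$, hence $\dim X^{\aff}=1$, and comparing $\ol\alpha_*\cO_{\ol X}=\cO_B$ (which holds since $\ol\alpha$ is a fibration) with $\alpha_*\cO_X=\cO_{X^{\aff}}$ shows the induced morphism $X^{\aff}\to B\setminus T$ is an isomorphism, so $\ol\alpha$ restricts to $\alpha$; this is part~\ref{item:numerics-1}. In case~(I), if $\dim X^{\aff}=1$ the $\dim X^{\aff}=1$ analysis in the proof of Theorem~\ref{thm:proper-surjective} would produce a fibration on $\ol X$ whose fibres support $D$, contradicting that every $C_i$ is a false fibre; so $\dim X^{\aff}=0$. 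Thus $\dim X^{\aff}=1$ forces case~(II) and $\dim X^{\aff}=0$ forces case~(I); since ``$D$ supports a union of fibres of a fibration'' holds precisely in case~(II) (if it holds, each $C_i$ is the support of a fibre, hence not a false fibre), parts~\ref{item:numerics-1} and~\ref{item:numerics-0} follow.

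The step I expect to be the main obstacle is part~\ref{item:numerics-2}: pinning down the structural consequences --- especially the connectedness of $D$, for which I expect one must re-run the $\dim X^{\aff}=2$ argument of Theorem~\ref{thm:proper-surjective} on an auxiliary model over which the affinisation extends to a morphism of surfaces --- and the converse implication, which ultimately relies on the genuinely numerical fact that a boundary which is not negative semidefinite supports a big divisor, i.e.\ on \cite[Proposition 3.2]{Schroer}. After~\ref{item:numerics-2}, the remaining parts are bookkeeping with the false-fibre dichotomy of Lemma~\ref{lem:FF'}.
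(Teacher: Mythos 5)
Your argument is essentially correct, but it takes a genuinely different route from the paper's at the two nontrivial points. For part \ref{item:numerics-2} the paper does no numerics at all: by Theorem \ref{thm:proper-surjective}, a saturated $X$ has $\dim X^{\aff}=2$ exactly when it is almost affine in the sense of \cite{Schroer}, and then \cite[Proposition 3.2]{Schroer} is quoted wholesale --- it already contains the connectedness of $D$ and the nef big divisor $B$; your Perron--Frobenius/Zariski-decomposition/Iitaka-dimension argument re-proves that numerical content (and is fine, provided you note in the forward direction the standard fact that for an effective divisor the negative part of the Zariski decomposition is supported on it, so the positive part really is a divisor supported on $D$ with positive square). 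For parts \ref{item:numerics-1} and \ref{item:numerics-0} the paper argues directly: if $\dim X^{\aff}=1$, then Theorem \ref{thm:proper-surjective} and Zariski connectedness make $\alpha\colon X\to X^{\aff}$ a proper fibration, which extends to a fibration of $\ol{X}$ by Lemma \ref{lem:fibre-type}\ref{item:F-extends}; you instead run the false-fibre dichotomy of Lemma \ref{lem:FF'} and glue the fibrations attached to the individual components of $D$. That works, but three steps deserve a line each: (i) the appeal to ``a fibration is determined by the numerical class of its fibres'' should be justified (the fibres of two such fibrations intersect trivially, so each fibre of one is contracted by the other; rigidity gives a factorization through a bijective morphism of base curves, which in characteristic $p$ may be purely inseparable --- harmless since only supports of fibres matter, but for the same reason ``$\ol{\alpha}_*\cO_{\ol X}=\cO_B$ since $\ol{\alpha}$ is a fibration'' needs $B$ replaced by the Stein factorization); (ii) in the forward direction of \ref{item:numerics-1} you should first note that $D$, being contained in fibres, is negative semi-definite, so part \ref{item:numerics-2} puts you in the regime $\dim X^{\aff}\leq 1$ where your dichotomy applies; (iii) your case~(I) contradiction produces the fibration only on a blown-up model over which the affinisation extends, so ``false fibre'' must be transported along the blow-up via Lemma \ref{lem:fibre-type}\ref{item:F-pullback}, which you do have available. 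None of this is a gap of substance; the paper's proof is shorter because Lemma \ref{lem:fibre-type}\ref{item:F-extends} and \cite[Proposition 3.2]{Schroer} absorb exactly the work you redo by hand, while your version is more self-contained on the numerical side.
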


\begin{proof}
	The first statement is proved in Proposition \ref{prop:saturation-criterion}.
	We now prove \ref{item:numerics-2}--\ref{item:numerics-0}.
	By Proposition \ref{prop:resolution} and Lemma \ref{lem:fibre-type}\ref{item:F-pullback},
	we can assume that $\ol{X}$ is smooth projective.
	
	\ref{item:numerics-2}
	The surface $X$ is \emph{almost affine} in the sense of \cite[\S 3]{Schroer}
	if and only if $\dim X^{\aff}=2$ and the affinisation morphism $X\to X^{\aff}$ is proper.
	Since $X$ is saturated, by Theorem \ref{thm:proper-surjective}
	this holds if and only if $\dim X^{\aff}=2$.
	Thus, \ref{item:numerics-2} follows from \cite[Proposition 3.2]{Schroer}. 

	\ref{item:numerics-0-1}
	This follows from \ref{item:numerics-2},
    	as the connected components of $D$ are not negative definite.
	
	\ref{item:numerics-1}
	A fibration $\ol{\alpha}$ as in \ref{item:numerics-1}
    	maps $X$ to the affine curve $B\setminus \ol{\alpha}(D)$,
    	so $\dim X^{\aff}\geq 1$.
	Since $D$ is contained in the fibres of $\ol{\alpha}$,
	it is negative semi-definite, so $\dim X^{\aff}=1$ by \ref{item:numerics-2}. 
	
	Conversely, assume $\dim X^{\aff}=1$.
	Since $\alpha\colon X\to X^{\aff}$ is proper by Theorem \ref{thm:proper-surjective}
	and $\alpha_{*}\cO_{X}=\cO_{X^{\aff}}$,
	we find that $\alpha$ has connected fibres 
	by Zariski's Connectedness Theorem, see \citestacks{0A1B}.
	Hence $\alpha\colon X\to X^{\aff}$ is a proper fibration.
	By Lemma \ref{lem:fibre-type}\ref{item:F-extends},
	it extends to a proper fibration $\ol{\alpha}\colon \ol{X}\to B$,
	and $D$ supports the fibres over $B\setminus X^{\aff}$, as needed.

	\ref{item:numerics-0}
    	This follows from \ref{item:numerics-0-1}, \ref{item:numerics-1},
	and Definition \ref{def:fibre-type} of a false fibre.
\end{proof} 	

\begin{REM}[Proposition \ref{prop:numerics} via Iitaka dimension]\label{rem:Iitaka-numerics}
	Proposition \ref{prop:numerics} can also be seen as a consequence of 
	Proposition \ref{prop:saturation-criterion} and 
	known properties of the Iitaka dimension, as follows.
	By Proposition \ref{prop:resolution}
	we can assume that $\ol{X}$ is smooth projective.
	Then by Remark \ref{rem:Iitaka} we have $\dim X^{\aff}=\kappa(\ol{X},D)$.
	Now Proposition \ref{prop:numerics}\ref{item:numerics-2}
	follows from \cite[Lemma 8.5 and Proposition 8.6]{Iitaka};
	\ref{prop:numerics}\ref{item:numerics-1} follows from \cite[Theorem 8.6]{Iitaka},
	and \ref{prop:numerics}\ref{item:numerics-0-1},\ref{item:numerics-1} are their formal consequences.
\end{REM}

\subsection{False fibres}\label{ssec:false-fibres}

In this section, we prove Theorem \ref{thm:intro-two-false-fibres}.
In particular, we prove that any disjoint collection of false fibres has at most $2$ elements 
(Corollary \ref{cor:at-most-two}),
and additionally restrict the case with exactly two 
(Proposition \ref{prop:two-false-fibres}).
This result  improves Proposition \ref{prop:numerics}
with a criterion allowing to distinguish the case 
$\dim X^{\aff}=0$ from $\dim X^{\aff}=1$,
i.e.\ a false fibre from a genuine fibre of a fibration.
As shown by Examples \ref{ex:Hironaka} and \ref{ex:Serre},
such a criterion cannot be purely numerical. 

\smallskip

The proof uses the Albanese variety, which we now briefly recall.
For a smooth projective variety $\ol{X}$,
its \emph{Albanese variety} is an Abelian variety 
$\Alb(\ol{X})$ together with the \emph{Albanese morphism} 
$a\colon \ol{X}\to \Alb(\ol{X})$ such that the image 
$a(\ol{X})$ generates $\Alb(\ol{X})$ as a group,
and for any morphism $f\colon \ol{X}\to A$ to an abelian variety 
we have $f=h\circ a+c$ for a unique homomorphism $h\colon \Alb(\ol{X})\to A$ 
and a unique constant $c\in A$. 
For the existence and basic properties of $\Alb(\ol{X})$
see e.g.\ \cite[\S IV]{Lang_Abelian}.
By Theorem VI.1 loc.\ cit, the dual to $\Alb(\ol{X})$ is the
Picard variety $\Pic^{0}\redd(\ol{X})$ parametrising line bundles 
which are algebraically equivalent to $0$, modulo linear equivalence.
By \cite[9.6.3]{FGAexp} 
(see also \cite[19.3.1]{Fulton_intersection-theory} for the case $\kk=\mathbb{C}$),
every numerically trivial divisor on $\ol{X}$ 
has a multiple which corresponds to an algebraically trivial line bundle,
so its linear equivalence class falls in $\Pic^{0}(\ol{X})$.
If $C$ is a smooth projective curve,
then $\Pic^{0}(C)\cong\Alb(C)$ is isomorphic to the Jacobian of $C$,
in particular its dimension is the genus $g(C)$ of $C$ \cite[Theorem VI.3]{Lang_Abelian}.
If $\ol{X}$ is a smooth projective surface, then $\dim \Pic\redd^{0}(\ol{X})$ is called the
\emph{irregularity} of $\ol{X}$ and is denoted by $q(\ol{X})$.

We now prove the main technical result of this section.
For a stronger version in characteristic~$0$, see Corollary \ref{cor:kernel} below.

\begin{LEM}\label{lem:Pic-restriction}
	Let $D$ be a false fibre on a smooth projective surface $\ol{X}$.
	Assume that all irreducible components $D_{1},\dots,D_{n}$ of $D$ are smooth.
	Then the restriction homomorphism
	\begin{equation*}
		 \Pic\redd^{0}(\ol{X})\to \prod_{i=1}^{n}\Pic^{0}(D_i)
	\end{equation*}
	has finite kernel.
\end{LEM}
\begin{proof}
	The restriction homomorphism $\Pic^{0}\redd(\ol{X})\to \Pic^{0}(D_{i})$ is dual to the homomorphism
    	$\Alb(D_i)\to \Alb(\ol{X})$ induced by the inclusion $D_{i}\into \ol{X}$.
    	Therefore, by \cite[Proposition V.2]{Lang_Abelian}
    	it is enough to prove that $\prod_{i=1}^{n}\Alb(D_i)\to \Alb(\ol{X})$ is surjective.
    	Denote the image of this homomorphism by $A$,
    	and let $f\colon \ol{X}\to \Alb(\ol{X})/A$ be the composition of the Albanese morphism 
	$a\colon \ol{X}\to \Alb(\ol{X})$ with the quotient $\Alb(\ol{X})\to \Alb(\ol{X})/A$. 
	Since $a(\ol{X})$ generates $\Alb(\ol{X})$,
	its image $f(\ol{X})$ generates $\Alb(\ol{X})/A$.
    	Thus, it is enough to prove that $f(\ol{X})$ is a point. 
	
	The restriction $a|_{D_i}\colon D_{i}\into \ol{X}\to \Alb(\ol{X})$
	factors as the composition of the Albanese morphism $D_{i}\to \Alb(D_{i})$,
	the above homomorphism $\Alb(D_i)\to \Alb(\ol{X})$, and a translation.
	Therefore, $a(D_i)\subseteq \Alb(\ol{X})$ is a coset of $\Alb(D_i)$,
	so $f(D_i)$ is a point.
	Since $D$ is connected, it follows that 
	$f\colon \ol{X}\to \Alb(\ol{X})/A$ maps $D$ to a point.
		
	Because $D$ is not negative definite, 
	we have $\dim f(\ol{X})\leq 1$. 
	If $\dim f(\ol{X})=1$, 
	then the Stein factorisation of $q$ gives a fibration of $\ol{X}$ 
	with one of its fibres supported on $D$, 
	contrary to the definition of a false fibre. 
	Thus, $\dim f(\ol{X})=0$, as needed.
\end{proof}

\begin{PROP}[Two disjoint false fibres]\label{prop:two-false-fibres}
	Let $D,D'$ be disjoint false fibres on a smooth projective surface $\ol{X}$.
	Let $F$ and $F'$ be effective divisors
	of self-intersection $0$ supported on $D$ and $D'$,
	respectively, see Lemma~\ref{lem:fibre-type}.
	Let $D_{1},\dots,D_{n}$ be all irreducible components of $D$,
	let $\wt{D}_{i}$ be the normalisation of $D_i$,
	and denote its genus by $g(\wt{D}_i)$.
	Then the following hold.
	\begin{enumerate}
		\item\label{item:false-fibre-difference}
			For some integers $m,m'$,
			the divisor $mF-m'F'$ is a non-torsion element of $\Pic^{0}(\ol{X})$. 
		\item\label{item:false-fibre-normal}
			The pullback of $\cO_{\ol{X}}(F)$ to some $\wt{D}_i$ is a non-torsion line bundle of degree $0$. 
		\item\label{item:false-fibre-genus-irregularity}
			We have $0<q(\ol{X})\leq \sum_{i=1}^{n}g(\wt{D}_i)$
			and $\kk$ is not an algebraic closure of a finite field.
		\item\label{item:false-fibre-2}
			Let $D''\neq D,D'$ be another false fibre. 
        		Then $D''$ meets $D$ and $D'$.
	\end{enumerate}
\end{PROP}	
\begin{proof}
	Let $\pi\colon \ol{Y}\to \ol{X}$ be a proper birational morphism of smooth projective surfaces
such that all irreducible components of $\pi^{-1}(D)$ are smooth.
    	By Lemma \ref{lem:fibre-type}\ref{item:F-pullback}
	the preimages of false fibres on $\ol{X}$ are false fibres on $\ol{Y}$. 
	The inequality in \ref{item:false-fibre-genus-irregularity}
	is the same for $D$ and for $\pi^{-1}(D)$: 
	indeed, we have $q(\ol{Y})=q(\ol{X})$, 
    	the irreducible components of the proper transform of $D$
	are isomorphic to $\wt{D}_1,\dots,\wt{D}_n$,
	and the exceptional curves have genus~$0$. 
	Thus, replacing $D,D',D''$ by their preimages on $\ol{Y}$, and $F,F'$ by their pullbacks,
    	we can assume that $D_1,\dots,D_n$ are smooth.
	
	\ref{item:false-fibre-difference} 
    	By Lemma \ref{lem:FF'}\ref{item:FF'-num-prop},
    	we can replace $F,F'$ by some 
    	non-zero multiples so that $F-F'$ is numerically trivial. 
    	By \cite[9.6.3]{FGAexp}, replacing $F-F'$ by its multiple 
    	we can assume that it is algebraically trivial,
	i.e.\ $F-F'\in \Pic^{0}(\ol{X})$.
	Suppose it is torsion, so once again 
	replacing it by a multiple we get a linear equivalence $F-F'\sim 0$.
	So there is a rational function whose divisors
	of zeros and poles are equal to $F$ and $F'$, respectively.
	Since the supports of $F$ and $F'$ are disjoint,
	this function extends to a morphism $\ol{X}\to \mathbb{P}^1$
	such that $F$ and $F'$ are fibres over $0$ and $\infty$.
	Its Stein factorization gives a fibration 
	with fibres supported on $D$ and $D'$, a contradiction.
	
	\ref{item:false-fibre-normal}
	By \ref{item:false-fibre-difference},
	we can replace $F$ and $F'$ by some multiples so that $F-F'$ 
    	is a non-torsion element of $\Pic^0(\ol{X})$. 
    	By Lemma \ref{lem:Pic-restriction}, its restriction 
    	to $\prod_{i=1}^{n}\Pic^0(D_i)$ is non-torsion, too, 
    	so $(F-F')|_{D_i}$ for some $i$ is a non-torsion element of $\Pic^{0}(D_i)$. 
    	Since $F'$ is supported away from $D_i$, 
    	we get that $F|_{D_i}$ is a non-torsion element of $\Pic^{0}(D_i)$, as needed.
	
	\ref{item:false-fibre-genus-irregularity} 
	Part \ref{item:false-fibre-difference}
	implies that $\Pic^{0}(\ol{X})$ is not a torsion group,
	so $q(\ol{X})=\dim \Pic^0(\ol{X})>0$ and $\kk$
	is not an algebraic closure of a finite field.
	The inequality $q(\ol{X})\leq \sum_{i}g(D_i)$ holds by Lemma~\ref{lem:Pic-restriction}. 
	
	\ref{item:false-fibre-2}
    	Suppose the contrary, so, say, $D''$ is disjoint from $D'$.
	Suppose $D''$ meets $D$.
	Then $(D+D'')\redd$ is a connected reduced divisor 
	which is disjoint from $D'$ and not negative definite,
	hence of fibre type by Lemma \ref{lem:FF'}\ref{item:FF'-fiber-type}.
	Since $D$ and $D''$ are of fibre type, too,
	by Lemma \ref{lem:fibre-type}\ref{item:F-proper-subset-is-negative}
	we get $D=(D+D'')\redd=D''$, a contradiction.
    
    	Thus $D,D',$ and $D''$ are pairwise disjoint.
    	By \ref{item:false-fibre-difference}, we have divisors $F'',F'$ 
    	supported on $D'',D'$ such that $F''-F'$ is non-torsion in $\Pic^{0}(\ol{X})$. 
    	By Lemma \ref{lem:Pic-restriction},
    	the restriction of $F''-F'$ to some $D_i$ is non-zero. 
    	This is a contradiction, as $F''-F'$ is supported away from $D$.
\end{proof}

\begin{COR}\label{cor:at-most-two}
	Let $D_1,D_2,D_3$ be false fibres on a surface $\ol{X}$.
	Then for some $i\in \{1,2,3\}$ we have $D_{i}\cap D_j\neq \emptyset$ for both $j\neq i$.
\end{COR}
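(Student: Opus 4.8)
The plan is to suppose the assertion fails and reach a contradiction via Proposition~\ref{prop:two-false-fibres}\ref{item:false-fibre-2}. Assume no $D_i$ meets both of the other two; then for each $i$ there is some $j\neq i$ with $D_i\cap D_j=\emptyset$, and since false fibres are non-empty such $D_i,D_j$ are distinct. Viewing ``$D_i\cap D_j=\emptyset$'' as an edge on $\{1,2,3\}$, every vertex then has positive degree, so there are at least two edges; as any two of the three possible edges share a vertex, after renumbering I may assume $D_1\cap D_2=\emptyset$ and $D_1\cap D_3=\emptyset$. Here $D_1,D_2,D_3$ are taken pairwise distinct, as is implicit in the statement (if two coincided, the conclusion could already fail for a disjoint pair of false fibres). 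It now suffices to show that $D_1\cap D_3\neq\emptyset$.

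Next I would reduce to the case where the surface is smooth and projective. By Proposition~\ref{prop:resolution}\ref{item:resolution-exists} there is a proper birational morphism $\pi\colon Y\to X$ with $Y=\ol{Y}\setminus D_\infty$ an open subspace of a smooth projective surface $\ol{Y}$. By Lemma~\ref{lem:fibre-type}\ref{item:F-pullback} each $E_i\de\pi^{-1}(D_i)$ is a false fibre on $Y$, and since $\pi$ is surjective $E_i\cap E_j=\pi^{-1}(D_i\cap D_j)$ is empty exactly when $D_i\cap D_j$ is; thus $E_1,E_2,E_3$ are pairwise distinct false fibres on $Y$ with $E_1\cap E_2=E_1\cap E_3=\emptyset$.

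The one point requiring an argument is that each $E_i$ is a false fibre on $\ol{Y}$, not merely on $Y$. It is a proper reduced divisor on $\ol{Y}$, and as $Y$ and $\ol{Y}$ are smooth surfaces both containing $E_i$, the intersection matrix of the components of $E_i$ is the same whether computed on $Y$ or on $\ol{Y}$, so $E_i$ stays of fibre type. Suppose $E_i$ supported a fibre of a fibration $g\colon\ol{Y}\to B$, with $(g^{*}b)\redd=E_i$ for some $b\in B$. Then $g^{-1}(b)\subseteq E_i\subseteq Y$, so $b\notin g(\ol{Y}\setminus Y)$, a closed subset of $B$ since $g$ is proper; putting $B_0=B\setminus g(\ol{Y}\setminus Y)$, the restriction $g\colon g^{-1}(B_0)\to B_0$ is a fibration of the open subspace $g^{-1}(B_0)\subseteq Y$ whose fibre over $b$ is supported on $E_i$. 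Hence $E_i$, which is of fibre type on $g^{-1}(B_0)$ for the same local reason, supports a fibre there, so it is not a false fibre on $g^{-1}(B_0)$; by the contrapositive of the last sentence of Lemma~\ref{lem:fibre-type}\ref{item:F-extends}, applied to the open embedding $g^{-1}(B_0)\into Y$, it is then not a false fibre on $Y$ either — contradicting the previous paragraph. So each $E_i$ is a false fibre on $\ol{Y}$.

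Finally, $E_1$ and $E_2$ are disjoint false fibres on the smooth projective surface $\ol{Y}$ and $E_3\neq E_1,E_2$ is a further false fibre, so Proposition~\ref{prop:two-false-fibres}\ref{item:false-fibre-2} yields that $E_3$ meets $E_1$, contradicting $E_1\cap E_3=\emptyset$. I expect the only genuinely non-formal step to be the reduction to the smooth projective case — concretely, checking that being a false fibre survives passage from $Y$ to its smooth projective compactification $\ol{Y}$; everything else is the combinatorial observation at the start together with a direct invocation of Proposition~\ref{prop:two-false-fibres}.
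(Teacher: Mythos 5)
Your proof is correct and takes essentially the same route as the paper's: reduce to a smooth projective compactification using Proposition~\ref{prop:resolution}\ref{item:resolution-exists} and Lemma~\ref{lem:fibre-type}\ref{item:F-pullback},\ref{item:F-extends}, then apply Proposition~\ref{prop:two-false-fibres}\ref{item:false-fibre-2} to a disjoint pair together with the third false fibre. Your extra paragraph checking that false-fibre-ness passes from $Y$ up to $\ol{Y}$ (the direction converse to the literal ``in particular'' of Lemma~\ref{lem:fibre-type}\ref{item:F-extends}) is precisely what the paper compresses into its citation of that lemma, and your derivation of it is valid.
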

\begin{proof}
	By Proposition \ref{prop:resolution}\ref{item:resolution-exists}
	and Lemma \ref{lem:fibre-type}\ref{item:F-pullback},\ref{item:F-extends},
	we can compactify $\ol{X}$ and resolve its singularities,
	and thus assume that $\ol{X}$ is smooth projective. 
	If, say, $D_1\cap D_2=\emptyset$,
	then by Proposition \ref{prop:two-false-fibres}\ref{item:false-fibre-2}
	we have $D_3\cap D_i\neq \emptyset$ for $i=1,2$, as needed.
\end{proof} 

\begin{COR}[Theorem \ref{thm:intro-two-false-fibres}]
\label{cor:two-false-fibres}
	Let $X$ be a saturated surface such that $X^{\aff}$ is a point.
	Let $\ol{X}$ be a proper surface containing $X$ as an open subspace,
	and let $D\de \ol{X}\setminus X$.
	Then $D$ has at most two connected components.
\end{COR}
\begin{proof}
	By Proposition \ref{prop:numerics}\ref{item:numerics-0}
	the connected components of $D$ are false fibres.
	The result follows from Corollary \ref{cor:at-most-two}.
\end{proof}

\begin{REM}[Normal bundle to a false fibre]
\label{rem:normal-bundle}
	Let $C$ be a smooth curve on a smooth projective surface $\ol{X}$,
	and let $\cN_{C/\ol{X}}\de \cO_{\ol{X}}(C)|_{C}$ be its normal bundle,
	so $\deg \cN_{C/\ol{X}}=C^2$.
	Assume that $C$ is of fibre type,
	i.e.\ $\deg \cN_{C/\ol{X}}=0$.
	We claim that if $\cN_{C/\ol{X}}$ is non-torsion,
	then $C$ is a false fibre.
	
	Suppose the contrary,
	so $mC$ for some $m>0$ is a fibre of a fibration $f\colon \ol{X}\to B$,
	say $mC=f^{*}b$ for some $b\in B$.
	Then $\cN_{C/\ol{X}}^{\otimes m}=\cO_{\ol{X}}(mC)|_{C}=f^{*}\cO_{B}(b)|_{b}=\cO_{C}$, a contradiction.
	
	Proposition \ref{prop:two-false-fibres}\ref{item:false-fibre-normal}
	shows that the converse is true if $\ol{X}\setminus C$ contains a false fibre.
	In general, the converse is false:
	Example \ref{ex:Serre} shows that the normal bundle to a false fibre can be trivial.
\end{REM}

\begin{REM}[Mumford's criterion in positive characteristic]
\label{rem:Mumford}
	As in Remark \ref{rem:normal-bundle}, 
	let $C$ be a smooth curve on a smooth projective surface $\ol{X}$ such that $C^2=0$.
	Assume $\cha \kk>0$.
	Then \cite[Proposition on p.\ 336]{Mumford-Enriques-1} asserts that $C$ is a false fibre
	if and only if for every integer $n\geq 1$
	we have $\cO_{\ol{X}}(nC)/\cO_{\ol{X}}\not \cong \cO_{\ol{X}}/\cO_{\ol{X}}(-nC)$.
	In particular, taking $n=1$ we get that in positive characteristic
	the normal bundle to a false fibre must be non-trivial.
\end{REM}
\begin{proof} 
	Since the proof of loc.\ cit.\ is left to the reader,
	we spell it out for completeness. 

	The curve $C$ is of fibre type,
	so either $C$ is a false fibre and $h^0(\ol{X},\mathcal{O}_{\ol{X}}(nC))=1$ for all $n\geq 1$,
	or $C$ supports a fibre of a fibration and $h^0(\ol{X},\mathcal{O}_{\ol{X}}(nC))\xrightarrow{n\to \infty}\infty$.
	Put $\mathcal{F}_n:=\mathcal{O}_{\ol{X}}(nC)/\mathcal{O}_{\ol{X}}$.
	From the long exact cohomology sequence of
	$0\to \mathcal{O}_{\ol{X}} \to \mathcal{O}_{\ol{X}}(nC)\to\mathcal{F}_n\to 0$
	we observe that $h^0(\ol{X},\mathcal{O}_{\ol{X}}(nC))\xrightarrow{n\to \infty}\infty$
	if and only if $h^0(\ol{X},\mathcal{F}_n)\xrightarrow{n\to \infty}\infty$.
	Thus it is enough to check that $h^0(\ol{X},\mathcal{F}_n)\xrightarrow{n\to \infty}\infty$
	if and only if $\cF_{n}\cong \cO_{\ol{X}}/\cO_{\ol{X}}(-nC)$ for some $n\geq 1$.
	
	Let $i\colon C\into \ol{X}$ be the closed immersion of $C$ into $\ol{X}$.
	Let $\cL\de i^{*}\cO_{\ol{X}}(C)$ be its normal bundle.
	We have a canonical exact sequence $0\to \cO_{\ol{X}}\to \cO_{\ol{X}}(C)\to i_{*}\cL\to 0$,
	which after tensoring with $\cO_{\ol{X}}((n-1)C)$ gives 
	$0\to \cO_{\ol{X}}((n-1)C)\to \cO_{\ol{X}}(nC)\to i_{*}\cL^{\otimes n}\to 0$,
	as $i_{*}\cL\otimes \cO_{\ol{X}}((n-1)C)=
	i_{*}(\cL\otimes i^{*}\cO_{\ol{X}}((n-1)C))=
	i_{*}(\cL\otimes \cL^{\otimes (n-1)})=
	i_{*}\cL^{\otimes n}$.
	Thus we obtain $i_{*}\cL^{\otimes n}\cong \cO_{\ol{X}}(nC)/\cO_{\ol{X}}((n-1)C)\cong \cF_{n}/\cF_{n-1}$.
	We get an exact sequence
	\begin{equation*}
		0\to \cF_{n-1}\to \cF_{n}\to i_{*}\cL^{\otimes n}\to 0,
	\end{equation*} 	
	where $\cF_{n-1}\to \cF_{n}$ is the multiplication by the canonical section 
	$\cO_{\ol{X}}\to \cO_{\ol{X}}(C)$,
	which restricts to zero on $C$,
	so the map $\cF_{n}\to i_{*}\cL^{\otimes n}$
	restricts to an isomorphism $i^{*}\cF_{n}\cong \cL^{\otimes n}$.

	We claim that $h^0(\ol{X},\mathcal{F}_n)>h^0(\ol{X},\mathcal{F}_{n-1})$ if and only if 
	$\mathcal{F}_n\cong \mathcal{O}_{\ol{X}}/\mathcal{O}_{\ol{X}}(-nC)$.

	Assume $\mathcal{F}_n\cong \mathcal{O}_{\ol{X}}/\mathcal{O}_{\ol{X}}(-nC)$.
	We have a surjective morphism 
	$\mathcal{O}_{\ol{X}}\twoheadrightarrow \mathcal{F}_n\twoheadrightarrow i_{*}\mathcal{L}^{\otimes n}$,
	so we have a global section of $\mathcal{F}_n$
	which maps to a non-zero global section of $i_{*}\mathcal{L}^{\otimes n}$.
	The above exact sequence shows that $h^0(\ol{X},\mathcal{F}_n)>h^0(\ol{X},\mathcal{F}_{n-1})$.

	Conversely, assume that $h^0(\ol{X},\mathcal{F}_n)>h^0(\ol{X},\mathcal{F}_{n-1})$.
	Then some global section $s\in H^0(\ol{X},\mathcal{F}_n)$ maps to a non-zero section 
	$i^{*}s\in H^{0}(\ol{X},\cL^{\otimes n})$.
	Since $\deg_{C}\mathcal{L}^{\otimes n}=(nC)^2=0$, 
	we find that $i^{*}s$ induces an isomorphism 
	$\mathcal{O}_C\xrightarrow{\sim}\mathcal{L}^{\otimes n}\cong i^{*}\cF_{n}$.
	It follows that $s\colon \mathcal{O}_{\ol{X}}\to \mathcal{F}_n$ is surjective.
	Indeed, on $\ol{X}\setminus C$ the sheaf $\mathcal{F}_n$ vanishes, 
	whereas $i^{\ast}s$ is an isomorphism on $C$, 
	so the support of the cokernel of $s$ does not contain closed points, i.e is empty.

	Consider the kernel $\mathcal{K}$ of $s$.
	Since the projective dimension of 
	$\mathcal{F}_n=\cO_{\ol{X}}(nC)/\cO_{\ol{X}}(C)$ is $1$ (on each affine chart of $\ol{X}$)
	and $\mathcal{K}$ admits a resolution by vector bundles (as $\ol{X}$ is smooth projective), 
	we see that $\mathcal{K}$ is locally free,
	\cite[\href{https://stacks.math.columbia.edu/tag/0CXC}{Lemma 0CXC}]{stacks-project}.
	Further, we can compute the determinant of $\mathcal{F}_n$ 
	via the short exact sequences
	$0\to \mathcal{O}_{\ol{X}}\to \mathcal{O}_{\ol{X}}(nC)\to \mathcal{F}_n\to 0$
	or $0 \to \mathcal{K}\to \mathcal{O}_{\ol{X}}\to \mathcal{F}_n\to 0$
	to get $\mathcal{K}\cong \mathcal{O}_{\ol{X}}(-nC)$.
	Hence $\mathcal{F}_n\cong \mathcal{O}_{\ol{X}}/\mathcal{O}_{\ol{X}}(-nC)$, as claimed.

	Recall that $\cha\kk=p>0$.
	Pulling back line bundles along the absolute Frobenius of~$\ol{X}$
	raises them to the $p$-th powers,
	so pulling back the isomorphism 
	$\cF_{n}\cong \mathcal{O}_{\ol{X}}/\mathcal{O}_{\ol{X}}(-nC)$
	gives isomorphisms 
	$\cF_{np^k}\cong \mathcal{O}_{\ol{X}}/\mathcal{O}_{\ol{X}}(-np^kC)$ 
	for all $k>0$.
	Thus, $h^0(\ol{X},\mathcal{F}_n)> h^0(\ol{X},\mathcal{F}_{n-1})$
	happening at least once is equivalent to it happening for infinitely many $n$,
	and we get the desired equivalence.

	For the last statement of the remark, note that if the normal bundle $\cL$ to $C$ is trivial,
	then $\cF_{1}\cong i_{*}\cL\cong i_{*}\cO_{C}\cong \cO_{C}/\cO(-C)$,
	so $C$ is not a false fibre.
\end{proof}

\begin{REM}[False fibres of low genus]\label{rem:Sakai} 
	Let $D$ be a divisor of fibre type on a smooth projective surface $\ol{X}$,
	and let $F$ be a divisor with $F^2=0$ supported on $D$.
	Assume $F\cdot K_{\ol{X}}<0$:
	this holds e.g.\ if $D\cong \bP^1$,
	by the adjunction formula.
	In this case,
	it is known that $|F|$ induces a $\bP^1$-fibration of $\ol{X}$,
	hence $D$ is not a false fibre 
	(to see this, use the asymptotic Riemann--Roch theorem for $F$,
	or apply \cite[Theorem 1.28(2)]{KolMor} to a minimal model of $\ol{X}$).
	
	Assume $F\cdot K_{\ol{X}}=0$ (e.g.\ $D$ is an elliptic curve). 
	Then either $D$ is a false fibre, 
	or $|F|$ induces an elliptic or quasi-elliptic fibration of $\ol{X}$.
	This case was studied in \cite{Sakai-D-dimension},
	under the assumption $\cha\kk=0$. Theorem 1 loc.\ cit.\ implies that 
	if $D$ is a false fibre, then either $\ol{X}$ is rational 
	(as in Example \ref{ex:Hironaka}),
	or $D$ contains a section of a $\bP^1$-fibration $\ol{X}\to C$ over an elliptic curve 
	(as in Examples \ref{ex:ruled}, \ref{ex:Serre}).
	Furthermore, if $\ol{X}$ contains two disjoint false fibres,
	then the second case holds (note that $q(\ol{X})=1$ by 
	Proposition \ref{prop:two-false-fibres}\ref{item:false-fibre-genus-irregularity}),
	and $\ol{X}$ is a blowup of $\bP(\cO_{C}\oplus \cL)$ for some non-torsion line bundle 
	$\cL$ of degree $0$ on $C$, see Example \ref{ex:ruled}\ref{item:ex-ruled-2}.
\end{REM}

Our next result is a stronger version of 
Lemma \ref{lem:Pic-restriction} in characteristic $0$.
We do not know if this result is optimal, 
i.e. if \eqref{eq:restriction} can have non-trivial kernel.

\begin{COR}\label{cor:kernel}
	Assume $\cha\kk=0$. 
	Let $D$ be a false fibre on a smooth projective surface $\ol{X}$
	such that all its irreducible components
	$D_1,\dots,D_n$ are smooth.
	Then the restriction map
	\begin{equation}
	\label{eq:restriction}
		 \Pic\redd^{0}(\ol{X})\to \prod_{i=1}^{n}\Pic^{0}(D_i)
	\end{equation}
	has kernel of order at most $2$.
\end{COR}

\begin{proof}
	Let $K$ be the kernel of \eqref{eq:restriction},
	and let $\mathcal{L}\in \Pic^{0}(\ol{X})$ be a line bundle in $K$.
	By Lemma \ref{lem:Pic-restriction}, $\cL$ is torsion,
	i.e.\ $\cL^{\otimes n}\cong \cO_{\ol{X}}$ for some $n\geq 1$.
	Since $\cha\kk=0$, the line bundle $\cL$ 
	determines a finite \'etale cover $\pi\colon \ol{X}'\to \ol{X}$ with Galois
	group $\mathbb{Z}/n\mathbb{Z}$, see e.g.\ \cite[Proposition 4.1.6]{Laz}.

	By Proposition \ref{prop:numerics}\ref{item:numerics-0},
	$X\de \ol{X}\setminus D$ is saturated with trivial affinisation.
	Let $X'=\pi^{-1}(X)$, and consider the finite \'etale cover $\pi|_{X'}\colon X'\to X$.
	By Corollary \ref{cor:stable-class}, $X'$ is saturated with trivial affinisation,
	too, so the connected components of $D'\de \ol{X}\setminus X'=\pi^{-1}(D)$ are false fibres.
	However, since $\mathcal{L}$ lies in $K$,
	we have $\cL|_{D_i}\cong \cO_{D_i}$,
 	so $D'$ has $\deg \pi=n$ connected components.
	By Corollary \ref{cor:at-most-two} we conclude that $n \leq 2$,
	that is, $\mathcal{L}$ has order at most $2$.
	
	Suppose that $K$ contains distinct non-trivial line bundles
	$\mathcal{L}_1\neq \mathcal{L}_2$.
	Consider the corresponding finite \'etale covers
	$\ol{X}'_i\to \ol{X}$, $i=1,2$.
	The fibre product $\pi:\ol{X}'_1\times_{\ol{X}}\ol{X}'_2\to X$
	is a finite \'etale cover of degree $4$.
	Since $\mathcal{L}_1$ and $\mathcal{L}_2$ lie in $K$,
	$\pi^{-1}(D)$ has $4$ connected components. 
	As before, we see that those connected components are false fibres,
	a contradiction with Corollary \ref{cor:at-most-two}.
\end{proof}

\begin{REM}[Characterization of false fibres]
\label{rem:alternative-false-fibre}
	Let $D$ be a divisor of fibre type on a proper surface $\ol{X}$.
	Corollary \ref{cor:at-most-two} implies that $D$ is a false fibre
	if and only if $\ol{X}$ contains at most one divisor of fibre type which is disjoint from $D$. 
\end{REM}

\begin{REM}[{Comparison with \cite[Theorem 3.3]{Schroer}}]
\label{rem:Schroer}	
	Let $D$ be a connected reduced divisor on a projective surface $\ol{X}$.
	If $D$ is negative definite,
	then \cite[Theorem 3.3]{Schroer} asserts that $D$ supports
	a fibre of a proper birational morphism $\ol{X}\to \ol{Y}$ 
	if and only if the complement $\ol{X}\setminus D$ contains an effective divisor $R$ such that $R^2>0$. 
	In turn, if $D$ is of fibre type,
	then Remark \ref{rem:alternative-false-fibre}
	asserts that $D$ supports a fibre of a proper fibration $\ol{X}\to \ol{Y}$
	if and only if $\ol{X}\setminus D$ contains an effective divisor $R$
	such that $R^2=0$ and $R$ has non-connected support.
	
	In the Minimal Model Program,
	morphisms $\ol{X}\to \ol{Y}$ as above are called 
	\emph{divisorial} and \emph{fibre type} contractions, respectively,
	see \cite[Proposition 2.5]{KolMor}.
	In each case, if $\ol{X}\to \ol{Y}$ exists,
	then the required $R$ can be obtained as a pullback of a very ample divisor on $\ol{Y}$.
	Thus both criteria can be informally summarized as follows:
	\emph{a contraction of $D$ exists if and only if $\ol{X}\setminus D$
	contains an effective divisor which is a candidate for a pullback of a very ample one.}
\end{REM}

\subsection{Saturated surfaces which are non-proper over their affinisations}
\label{ssec:SA-min}
We now summarize our previous results into Theorem \ref{thm:dimXaff=0},
characterising surfaces $X$ for which the answer to 
A.\ Bondal's question is negative  -- by Theorem \ref{thm:proper-surjective},
these are non-proper saturated surfaces with $\dim X^{\aff}=0$.
First, we give a characterisation of 
saturated surfaces with $\dim X^{\aff}=1$
which does not require fixing a compactification.

\begin{PROP}\label{prop:dimXaff=1}
	Let $X$ be a non-proper saturated surface.
    	Then the following are equivalent.
	\begin{enumerate}[(i)]
		\item\label{item:equiv-1}
			 We have $\dim X^{\aff}=1$.
		\item\label{item:equiv-fibration}
			 The surface $X$ admits a proper fibration onto a smooth affine curve.
		\item\label{item:equiv-proper-curves-above}
			 The surface $X$ contains infinitely many proper curves.
		\item\label{item:equiv-proper-curves-below} 
			For every proper birational morphism $X\to Y$ of surfaces,
			the surface $Y$ contains at least two proper curves.
		\item\label{item:equiv-nonneg-def}
			The surface $X$ contains at least two proper divisors 
			with non-negative self-intersection numbers and different supports.
		\item\label{item:equiv-fibre-type}
			The surface $X$ contains at least two different divisors of fibre type.
	\end{enumerate}
\end{PROP}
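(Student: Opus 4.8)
The plan is to route all equivalences through (ii): show (i)$\Leftrightarrow$(ii), then that (ii) forces each of (iii)--(vi), and finally close the loop with (v)$\Leftrightarrow$(vi)$\Rightarrow$(i), (iv)$\Rightarrow$(iii), and (iii)$\Rightarrow$(i). Throughout I use that $X$ is saturated. For (i)$\Rightarrow$(ii): since $\dim X^{\aff}>0$, Theorem \ref{thm:proper-surjective} makes the affinisation $\alpha_X\colon X\to X^{\aff}$ proper, Lemma \ref{lem:surjective-1} makes it surjective, and $\alpha_{X,*}\cO_X=\cO_{X^{\aff}}$ together with Zariski's connectedness theorem \citestacks{0A1B} makes its fibres connected; since $X^{\aff}$ is a normal affine curve it is smooth, so $\alpha_X$ is the required fibration. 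Conversely, the Stein factorisation of any proper fibration $f\colon X\to B$ onto a smooth affine curve gives $f_*\cO_X=\cO_B$, so $X^{\aff}\cong\Spec H^0(B,\cO_B)=B$ and $\dim X^{\aff}=1$. In particular, when (i),(ii) hold, $\alpha_X$ itself is such a fibration, and I use this below.

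Assuming (ii) via $\alpha_X$: the fibres $\alpha_X^{-1}(b)$ over the infinitely many $b\in X^{\aff}$ are pairwise disjoint proper curves, and (extending $\alpha_X$ to a fibration of a compactification and applying Zariski's lemma, cf.\ the discussion preceding Lemma \ref{lem:fibre-type}) each is supported on a divisor of fibre type of self-intersection zero; this gives (iii), (v) and (vi) at once. For (iv): an exceptional curve of any proper birational $g\colon X\to Y$ is a proper curve, hence mapped by $\alpha_X$ to a point of the affine $X^{\aff}$, so the exceptional locus lies in the fibres of $\alpha_X$; by Lemma \ref{lem:contractions-are-universal}, $\alpha_X$ descends to a proper fibration $Y\to X^{\aff}$, whose fibres are infinitely many proper curves in $Y$. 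For (v)$\Leftrightarrow$(vi): a proper divisor $G$ on the saturated non-proper $X$ with $G^2\geq 0$ has in fact $G^2=0$, since otherwise $G$ would be disjoint from a boundary divisor $D$ of a compactification with $G^2>0$, forcing $D$ negative definite by the Hodge index theorem hence $D=0$, contradicting non-properness; then each connected component of $\Supp G$ is of fibre type, and conversely each divisor of fibre type carries such a $G$ by Lemma \ref{lem:fibre-type}\ref{item:F-exists}. Matching supports yields (v)$\Leftrightarrow$(vi).

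The decisive step is (vi)$\Rightarrow$(i), where the bound on false fibres from Section \ref{ssec:false-fibres} is used. Fix a smooth projective compactification $\ol{X}\supseteq X$ with reduced boundary $D=\ol{X}\setminus X$ (Proposition \ref{prop:resolution}); since $X$ is saturated, $D$ has no negative definite connected component (Proposition \ref{prop:saturation-criterion}), and $D\neq 0$ as $X$ is non-proper. Let $D_1,D_2$ be distinct divisors of fibre type in $X$. If $\dim X^{\aff}=2$, then by Proposition \ref{prop:numerics}\ref{item:numerics-2} the boundary $D$ carries a nef and big $B$, and $B\cdot D_i=0$ would put $D_i$ into the negative definite locus of curves orthogonal to $B$, contradicting that $D_i$ is of fibre type. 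If $\dim X^{\aff}=0$, then by Proposition \ref{prop:numerics}\ref{item:numerics-0} the boundary $D$ is a sum of disjoint false fibres, and since $D\neq 0$ it has a false-fibre component $\Delta$; but $D_1,D_2$ are then two divisors of fibre type disjoint from $\Delta$, contradicting Remark \ref{rem:alternative-false-fibre} (i.e.\ Corollary \ref{cor:at-most-two}). Hence $\dim X^{\aff}=1$. For (iv)$\Rightarrow$(iii): if $X$ had only finitely many proper curves, then contracting maximal negative definite subconfigurations of them repeatedly (the Picard number drops each time, by Artin's criterion, Lemma \ref{lem:intersection-product}\ref{item:D-Artin}) gives a proper birational $X\to Y$ with $Y$ saturated (Proposition \ref{prop:functoriality-birational}), non-proper, and with every proper curve of self-intersection $0$; if $Y$ had at least two such curves then, being of fibre type, (vi) holds for $Y$, so by the already established (vi)$\Rightarrow$(i)$\Rightarrow$(iii) the surface $Y$, hence $X$, has infinitely many proper curves --- a contradiction; so $Y$ has at most one proper curve, contradicting (iv).

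Finally, (iii)$\Rightarrow$(i) --- which I expect to be the main obstacle --- goes by passing to a smooth projective $\ol{X}\supseteq X$ as above (so (iii) is preserved up to finitely many exceptional curves). The case $\dim X^{\aff}=2$ is excluded as before ($\alpha_X$ proper birational, proper curves in its finite exceptional locus). So $D$ is of fibre type; letting $F$ span the radical of the negative semidefinite form on $D^{\perp}\subset\operatorname{NS}_{\mathbb{Q}}(\ol{X})$, the quotient $D^{\perp}/\langle F\rangle$ is negative definite, and since distinct irreducible curves meet non-negatively and there are only finitely many lattice vectors of given norm in a negative definite space, only finitely many proper curves of $X$ have negative self-intersection. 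Hence infinitely many proper curves of $X$ have self-intersection zero; by Lemma \ref{lem:FF'}\ref{item:FF'-num-prop} they are numerically proportional, so they move in a pencil defining a fibration of $\ol{X}$; the components of $D$ cannot dominate the base (they would meet these fibres, which lie in $X$), so $D$ is a union of fibres and $X$ maps onto an affine curve, giving $\dim X^{\aff}\geq 1$, hence $=1$, just as in Proposition \ref{prop:numerics}\ref{item:numerics-1}. Ruling out $\dim X^{\aff}=0$ here is precisely the (conversely valid) content of Theorem \ref{thm:dimXaff=0}, that a saturated surface with trivial affinisation contains only finitely many proper curves; the pencil argument above is how I would extract it.
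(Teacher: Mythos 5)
Your architecture is sound and, in substance, rests on the same pillars as the paper's proof: Proposition \ref{prop:saturation-criterion} to control the boundary, Theorem \ref{thm:proper-surjective} for (i)$\Rightarrow$(ii), the Hodge Index Theorem for (v)$\Leftrightarrow$(vi), contraction of a maximal negative definite configuration disjoint from the boundary (Lemma \ref{lem:intersection-product}\ref{item:D-Artin}) for the step involving (iv), and Corollary \ref{cor:at-most-two} to exclude $\dim X^{\aff}=0$. The organisation differs: the paper runs the single cycle (i)$\Rightarrow$(ii)$\Rightarrow$(iii)$\Rightarrow$(iv)$\Rightarrow$(v)$\Rightarrow$(vi)$\Rightarrow$(i), where (iii)$\Rightarrow$(iv) is trivial (an exceptional locus has finitely many components) and the maximal-negative-definite trick serves (iv)$\Rightarrow$(v); you make (ii) a hub and then prove (v)$\Leftrightarrow$(vi)$\Rightarrow$(i), (iv)$\Rightarrow$(iii) and a direct (iii)$\Rightarrow$(i) separately, which duplicates work and forces you to confront an implication the paper never needs. (Like the paper's own proof, you use saturatedness of $X$ throughout; this is genuinely needed, as $\bP^2$ minus a point satisfies (iii) and (v) but not (i).)

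The one flawed step is inside your (iii)$\Rightarrow$(i): having shown that infinitely many proper curves in $X$ have self-intersection zero, you conclude that, being numerically proportional, \enquote{they move in a pencil defining a fibration of $\ol{X}$}. Numerical proportionality does not yield a pencil --- this is exactly the false-fibre phenomenon the paper is built around (Example \ref{ex:non-proper}\ref{item:ex-ruled} is an irreducible square-zero curve moving in no pencil), and converting disjoint square-zero curves into an actual fibration is precisely what Proposition \ref{prop:two-false-fibres} and Corollary \ref{cor:at-most-two} are for; likewise Theorem \ref{thm:dimXaff=0} cannot be \enquote{extracted} from such a pencil argument. Fortunately the misstep is redundant in your own scheme: two distinct irreducible proper curves of self-intersection zero are two distinct divisors of fibre type, so condition (vi) holds and your already-established (vi)$\Rightarrow$(i), which does invoke Corollary \ref{cor:at-most-two} correctly, closes (iii)$\Rightarrow$(i); alternatively, three pairwise disjoint such curves together with Corollary \ref{cor:at-most-two} show that one of them supports a genuine fibre, after which your argument that $D$ lies in fibres goes through. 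Two smaller imprecisions: the finiteness of negative proper curves in $X$ should be justified not by \enquote{finitely many lattice vectors of given norm} (their norms are not bounded a priori) but by the fact that nonzero classes in a negative definite space meeting each other non-negatively are finite in number; and in (ii)$\Rightarrow$(i) the Stein factor $B'\to B$ may be purely inseparable in positive characteristic, so $f_*\cO_X=\cO_B$ can fail, although $\dim X^{\aff}=1$ still follows since $H^0(X,\cO_X)$ is finite over $H^0(B,\cO_B)$.
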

\begin{proof}
	Given a proper birational morphism $Y\to X$,
	all the above conditions are preserved after replacing $X$ by $Y$:
	indeed, for saturation and $\dim X^{\aff}$
	see Proposition \ref{prop:resolution}\ref{item:resolution-aff-sat};
	and for divisors of fibre type see Lemma \ref{lem:fibre-type}\ref{item:F-pullback}.
	Hence by Proposition \ref{prop:resolution}\ref{item:resolution-exists},
	we can assume that $X$ is a complement of a reduced divisor $D$
	on a smooth projective surface $\ol{X}$.
	We have $D\neq 0$ since $X$ is non-proper.
	Since $X$ is saturated, $D$ is not negative definite 
	(Proposition \ref{prop:saturation-criterion}).
	
	To see \ref{item:equiv-1}$\implies$\ref{item:equiv-fibration}, 
    	consider the affinisation morphism $\alpha\colon X\to X^{\aff}$. 
    	It is proper by Theorem \ref{thm:proper-surjective},
    	hence a fibration because $\alpha_{*}\cO_{X}=\cO_{X^{\aff}}$,
    	cf.\ Proposition \ref{prop:numerics}\ref{item:numerics-1}.
    	Clearly \ref{item:equiv-fibration} implies \ref{item:equiv-proper-curves-above}:
    	take fibres of the fibration from \ref{item:equiv-fibration}.
    	The implication \ref{item:equiv-proper-curves-above}
    	$\implies$\ref{item:equiv-proper-curves-below}
	follows from the fact that the exceptional divisor 
	of a proper birational morphism has finitely many irreducible components.
	
	To see \ref{item:equiv-proper-curves-below}$\implies$\ref{item:equiv-nonneg-def},
    	let $E$ be a maximal (in the sense of inclusion)
    	reduced divisor on $\ol{X}$ which is disjoint from $D$
    	and whose intersection matrix is negative definite (possibly $E=0$).
    	Such $E$ exists because the N\'eron-Severi group 
    	$\operatorname{NS}_{\mathbb{Q}}(\ol{X})$ has finite rank.
    	By Artin's criterion, see Lemma~\ref{lem:intersection-product}\ref{item:D-Artin}, 
    	there is a proper birational morphism $\ol{\pi}\colon \ol{X}\to\ol{Y}$
    	which contracts $E$ and is an isomorphism on $\ol{X}\setminus E$.
    	By \ref{item:equiv-proper-curves-below},
    	the image $Y=\ol{\pi}(X)$ contains two proper curves.
    	Let $C_1,C_2$ be their proper transforms on $\ol{X}$.
    	Then $E+C_i$ are two divisors with different supports contained in $X$,
    	whose intersection matrices are not negative definite by definition of $E$, as required.
    
	For \ref{item:equiv-nonneg-def}$\implies$\ref{item:equiv-fibre-type},
	denote by $D_1,D_2$ the supports of the two divisors as in \ref{item:equiv-nonneg-def}.
	For $i=1,2$ we have $D\cdot D_i=0$ and both $D$ and $D_i$ are not negative definite,
	so the Hodge Index Theorem implies that $D$ and $D_i$ are of fibre type, as needed. 
	
	For \ref{item:equiv-fibre-type}$\implies$\ref{item:equiv-1}
	note that by Lemma \ref{lem:FF'}\ref{item:FF'-fiber-type},
	$D$ is of fibre type, so $\dim X^{\aff}\leq 1$ by 
    	Proposition \ref{prop:numerics}\ref{item:numerics-0-1}.
    	Suppose $\dim X^{\aff}=0$, so $D$ is a false fibre.
    	The two divisors of fibre type in $X$ are disjoint from $D$,
    	so by Lemma \ref{lem:FF'}\ref{item:FF'-false-fibre} they are false fibres, too.
    	By Corollary \ref{cor:at-most-two} one of them meets $D$, a contradiction.  
\end{proof}

\begin{THM}\label{thm:dimXaff=0}
	Let $X$ be an open subspace of a proper surface $\ol{X}$,
	and let $D=\ol{X}\setminus X$.
	Then the following are equivalent.
	\begin{enumerate}[(i)]
		\item\label{item:equiv-0-aff-non-proper}
			$X$ is saturated, but the affinisation morphism $X\to X^{\aff}$ is not proper.
		\item\label{item:equiv-0-aff-trivial}
			$X$ is saturated, non-proper, and $X^{\aff}$ is a point.
		\item\label{item:equiv-0-false-fibres}
			$D$ is a non-empty disjoint union of divisors of fibre type 
			and there is at most one reduced divisor contained in $X$
			which is not negative definite.
		\item\label{item:equiv-0-finitely-many-curves}
			$D$ is a non-empty disjoint union of divisors of fibre type
			and $X$ contains finitely many proper curves.
	\end{enumerate}
\end{THM}
\begin{proof}
	The equivalence \ref{item:equiv-0-aff-non-proper}$\iff$\ref{item:equiv-0-aff-trivial}
	holds by Theorem \ref{thm:proper-surjective}. 
	If $X$ is not saturated, then \ref{item:equiv-0-aff-trivial} fails, 
	and by Proposition \ref{prop:saturation-criterion} $D$
	has a connected component which is a point or a negative definite divisor,
	so \ref{item:equiv-0-false-fibres}, \ref{item:equiv-0-finitely-many-curves} fail, too.
	Assume $X$ is saturated.
	If $\dim X^{\aff}=2$, 
	then \ref{item:equiv-0-aff-trivial} fails,
	and by Proposition \ref{prop:numerics}\ref{item:numerics-2} $D$
	is not negative semi-definite,
	so again \ref{item:equiv-0-false-fibres}, \ref{item:equiv-0-finitely-many-curves} fail. 
	Assume $\dim X^{\aff}\leq 1$. 
    	By Proposition \ref{prop:numerics}\ref{item:numerics-0-1},
    	$D$ is a disjoint union of divisors of fibre type.
    	Then the equivalence of \ref{item:equiv-0-aff-trivial},
	\ref{item:equiv-0-false-fibres}, and \ref{item:equiv-0-finitely-many-curves}
    	follows from the equivalence of \ref{item:equiv-1}, 
    	\ref{item:equiv-nonneg-def}, and \ref{item:equiv-proper-curves-above} of Proposition 
    	\ref{prop:dimXaff=1}.
\end{proof}

As our last result, we note another consequence of Corollary \ref{cor:at-most-two}.
It suggests that saturated surfaces which are 
non-proper over their affinisation are rather special; but nonetheless, 
they exist in any birational class, at least if the field $\kk$ is large enough. 

\begin{PROP}\label{prop:SA-minimal}
	Let $X$ be a saturated surface.
	Let $\cS(X)$ be the set of saturated open subspaces $U\subseteq X$
	with $\dim U^{\aff}=\dim X^{\aff}$, ordered by inclusion.
	The following hold.
	\begin{enumerate}
		\item\label{item:SA-min-exists}
			The set $\cS(X)$ has a minimal element if and only if $\dim X^{\aff}=0$. 
		\item\label{item:SA-birational}
			Assume that $X$ is not proper,
			$\dim X^{\aff}=0$, and $\cS(X)=\{X\}$,
			i.e.\ every saturated open subspace 
			strictly contained in $X$ has non-trivial affinisation. 
        		Then for every proper birational morphism $Y\to X$ we have $\cS(Y)=\{Y\}$, too.
		\item\label{item:SA-birational-converse}
			Conversely, if $X$ is proper (hence $\dim X^{\aff}=0$)
			and $\kk$ is not an algebraic closure of a finite field,
			then there is a proper birational morphism $Y\to X$
			such that $\cS(Y)\neq \{Y\}$, i.e.\ $Y$ contains
			a non-proper surface with trivial affinisation.
	\end{enumerate}
\end{PROP}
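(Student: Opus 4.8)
The plan is to translate membership in $\cS$ into a statement about boundary divisors and then argue combinatorially with false fibres. For a saturated surface $Z$ with $\dim Z^{\aff}=0$ and any normal proper compactification $\ol Z\supseteq Z$, Propositions \ref{prop:saturation-criterion} and \ref{prop:numerics}\ref{item:numerics-0} show that an open $U\subseteq Z$ lies in $\cS(Z)$ if and only if $\ol Z\setminus U$ is a disjoint union of false fibres (in particular $\ol Z\setminus Z$ is such a union, empty exactly when $Z$ is proper). For \ref{item:SA-min-exists}, if $a\geq1$ and $U\in\cS(X)$, then $\alpha_U\colon U\to U^{\aff}$ is proper by Theorem \ref{thm:proper-surjective} and $H^0(U,\cO_U)$ is not a field, so for a non-zero non-unit $g$ the open $U':=\alpha_U^{-1}(D(g))\subsetneq U$ is proper over the $a$-dimensional affine variety $D(g)$, hence lies in $\cS(X)$ (saturated by Proposition \ref{prop:proper-implies-sat}, with $(U')^{\aff}=D(g)$ by flat base change); thus $\cS(X)$ has no minimal element. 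If $a=0$, I would fix $\ol X$, $D=\ol X\setminus X$ as above and use Corollary \ref{cor:at-most-two} to pick a maximal collection $\cC$ (of size $\leq2$) of pairwise disjoint false fibres on $\ol X$ containing every connected component of $D$; then $W_0:=\ol X\setminus\bigcup_{F\in\cC}F$ lies in $\cS(X)$, and if $W\in\cS(X)$ with $W\subseteq W_0$ then $\ol X\setminus W$ is a disjoint union of false fibres containing $\bigcup\cC$, each member of $\cC$ is a connected component of $\ol X\setminus W$ by Lemma \ref{lem:fibre-type}\ref{item:F-proper-subset-is-negative}, and any further component would enlarge $\cC$; so $W=W_0$ and $W_0$ is minimal.

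For the remaining parts I would first record the criterion that, for saturated $Z$ with $\dim Z^{\aff}=0$, one has $\cS(Z)=\{Z\}$ precisely when $Z$ contains no complete curve which is a false fibre: a nontrivial $U\in\cS(Z)$ furnishes such a curve as a connected component of $\ol Z\setminus U$ disjoint from $\ol Z\setminus Z$, and conversely a complete false fibre $C\subseteq Z$ gives $Z\setminus C\in\cS(Z)\setminus\{Z\}$ since $\ol Z\setminus(Z\setminus C)=(\ol Z\setminus Z)\sqcup C$ is a disjoint union of false fibres. For \ref{item:SA-birational}, given $\phi\colon Y\to X$ proper birational, $Y$ is saturated (Proposition \ref{prop:functoriality-birational}), non-proper, with $\dim Y^{\aff}=0$ (Lemma \ref{lem:resolution-aff}); assume for contradiction that $C\subseteq Y$ is a complete false fibre. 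Being of fibre type it is not negative definite, hence not contained in the exceptional locus of $\phi$, so $C':=\phi(C)$ is a complete curve in $X$. By Theorem \ref{thm:dimXaff=0}\ref{item:equiv-0-false-fibres} there is at most one complete reduced divisor on $Y$ that is not negative definite; since $C$ and $\phi^{-1}(C')$ are both such (the latter is complete over $\kk$ and contains $C$, hence is not negative definite), we get $\phi^{-1}(C')=C$. Then every divisor supported on $C'$ pulls back to one supported on $C$ of the same self-intersection (Lemma \ref{lem:intersection-product}\ref{item:D-pullback-def} and the projection formula), so $C'$ is of fibre type, and it is a false fibre by Lemma \ref{lem:fibre-type}\ref{item:F-pullback}; this contradicts $\cS(X)=\{X\}$, proving $\cS(Y)=\{Y\}$.

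For \ref{item:SA-birational-converse}, note that replacing $X$ by a resolution of singularities and precomposing $Y\to X$ with further blow-ups does not affect the conclusion, so I may assume $X$ is smooth projective and produce a blow-up $Y\to X$ carrying a complete false fibre. Fix an ample $H$ and, for $m\gg0$, a smooth connected curve $B\in|mH|$, so that $g(B)\geq1$ and $B^2=m^2H^2>g(B)$. Since $B^2\geq g(B)$, every line bundle of degree $B^2$ on $B$ is effective, so $\operatorname{Sym}^{B^2}B\to\Pic^{B^2}(B)$ is surjective and I can choose distinct $p_1,\dots,p_{B^2}\in B$ with $\cN_{B/X}\otimes\cO_B(-p_1-\dots-p_{B^2})$ non-torsion in $\Pic^0(B)$ --- using that $\Pic^0(B)$ is a positive-dimensional abelian variety and $\kk$ is not an algebraic closure of a finite field, so $\Pic^0(B)(\kk)$ is not a torsion group. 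Blowing up these points, the proper transform $C$ of $B$ satisfies $C\cong B$, $C^2=0$ and $\cN_{C/Y}\cong\cN_{B/X}\otimes\cO_B(-p_1-\dots-p_{B^2})$, non-torsion, so $C$ is a false fibre by Remark \ref{rem:normal-bundle}; as $Y$ is smooth projective (hence proper, $\dim Y^{\aff}=0$), the criterion gives $\cS(Y)\neq\{Y\}$, and $Y\to X$ is proper birational. The step I expect to be the main obstacle is the push-forward of the false fibre in \ref{item:SA-birational}: the hypothesis $\cS(X)=\{X\}$ enters only through Theorem \ref{thm:dimXaff=0} to force $\phi^{-1}(C')=C$, and making this (and hence the descent of the fibre-type and false-fibre properties along $\phi$) precise is the delicate point.
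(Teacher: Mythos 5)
Your parts \ref{item:SA-min-exists} and \ref{item:SA-birational-converse} are correct and stay close to the paper's mechanism: for $a>0$ you shrink along preimages of principal affine opens of $U^{\aff}$ (the paper shrinks along affine opens of $X^{\aff}$; same idea, and your version directly shows every $U\in\cS(X)$ has a strictly smaller element), while for $a=0$ you exhibit the minimal element explicitly as the complement of a maximal disjoint family of false fibres, where the paper instead bounds the length of descending chains; both rest on Corollary \ref{cor:at-most-two} together with Lemma \ref{lem:fibre-type}\ref{item:F-proper-subset-is-negative}. Your \ref{item:SA-birational-converse} is the paper's Hironaka-type construction in a slightly different dress, replacing the explicit pencil computation by Remark \ref{rem:normal-bundle}; the only point needing a word is the choice of \emph{distinct} points $p_1,\dots,p_{B^2}$ in a prescribed non-torsion twist of $\cN_{B/X}$ --- surjectivity of the Abel--Jacobi map alone does not produce a reduced divisor in the class you want, but the standard fix (fix all points but one and vary the last, using that $\Pic^0(B)(\kk)$ is not a torsion group) is exactly what the paper does.

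In \ref{item:SA-birational} there is a genuine, though local, gap at the final step. Your chain ``$\phi^{-1}(C')=C$ by Theorem \ref{thm:dimXaff=0}\ref{item:equiv-0-false-fibres}, hence $C'$ is of fibre type by Lemma \ref{lem:intersection-product}\ref{item:D-pullback-def}, hence a false fibre by Lemma \ref{lem:fibre-type}\ref{item:F-pullback}'' is fine as far as it goes, but ``false fibre'' is relative to the ambient surface: Lemma \ref{lem:fibre-type}\ref{item:F-pullback} applied to $\phi\colon Y\to X$ only tells you that $C'$ supports no fibre of a fibration \emph{of $X$}. To contradict $\cS(X)=\{X\}$ you must show $X\setminus C'\in\cS(X)$, and by Propositions \ref{prop:saturation-criterion} and \ref{prop:numerics}\ref{item:numerics-0} this needs $C'$ to be a false fibre \emph{on the compactification $\ol{X}$}; Lemma \ref{lem:fibre-type}\ref{item:F-extends} gives only the opposite implication (false on $\ol{X}$ implies false on $X$), so this passage is not justified as written --- it is precisely the ``delicate point'' you flagged. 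The missing step is supplied by the paper's device: since $X$ is non-proper, saturated, with trivial affinisation, $\ol{X}\setminus X$ is a non-empty disjoint union of false fibres on $\ol{X}$ (Proposition \ref{prop:numerics}\ref{item:numerics-0}), and $C'$ is connected, disjoint from it, and not negative definite, so Lemma \ref{lem:FF'}\ref{item:FF'-fiber-type},\ref{item:FF'-false-fibre} yields at once that $C'$ is a false fibre on $\ol{X}$. This is in fact how the paper argues: it pushes forward a square-zero divisor $F$ supported on the false fibre in $Y$ and applies Lemma \ref{lem:FF'}, thereby bypassing both the identification $\phi^{-1}(C')=C$ and the ambient-surface issue. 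Alternatively, you could extend $\phi$ to a proper birational morphism of compactifications $\ol{Y}\to\ol{X}$ and apply Lemma \ref{lem:fibre-type}\ref{item:F-pullback} there; with either patch your argument goes through.
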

\begin{proof}
	\ref{item:SA-min-exists}
	Assume first that $\dim X^{\aff}>0$.
	The affinisation morphism $\alpha\colon X\to X^{\aff}$ is surjective 
	by Theorem \ref{thm:proper-surjective} and Lemma \ref{lem:surjective-1},
	so any descending chain of affine open subspaces 
	$X^{\aff}\supsetneq V_{1}\supsetneq V_{2}\supsetneq \cdots$
	lifts to a descending chain 
	$X\supsetneq \alpha^{-1}(V_1)\supsetneq \alpha^{-1}(V_2)\supsetneq \cdots$.
    	Hence, it is enough to show that for every affine open subspace $V\subseteq X^{\aff}$,
    	its preimage $U\de \alpha^{-1}(V)$ belongs to $\cS(X)$.
    	Since $V$ is affine,
    	the restriction $\alpha|_{U}\colon U\to V$ factors as a composition 
    	$\alpha|_{U}=h\circ \alpha_{U}$ of the affinisation 
	$\alpha_{U}\colon U\to U^{\aff}$ and some morphism $h\colon U^{\aff}\to V$.
    	By Theorem \ref{thm:proper-surjective}, $\alpha$ is proper,
    	hence so is its base change $\alpha|_{U}$, and consequently $\alpha_{U}$ \citestacks{04NX}.
    	By Proposition \ref{prop:proper-implies-sat}, $U$ is saturated.
    	As before, we see that $\alpha_{U}$ is surjective,
    	so $h$ is proper, too \citestacks{0AGD}.
    	But $h$ is affine, so $h$ is finite, and since
    	$h_{*}\cO_{U^{\aff}}=\alpha_{*}\cO_{U}=\cO_{V}$,
    	we conclude that $h$ is an isomorphism. 
    	In particular, $U^{\aff}=V$ has the same dimension as $X^{\aff}$, as needed.
	
	Assume now that $\dim X^{\aff}=0$. By Proposition \ref{prop:resolution} 
    	we can assume that $X$ is a complement of a reduced divisor $D$
    	in a smooth projective surface. 
    	Indeed, if $\pi\colon Y\to X$ is a proper birational morphism 
    	as in Proposition \ref{prop:resolution}, then any descending chain
    	$X\supsetneq U \supsetneq \cdots$ in $\cS(X)$ lifts to a descending 
    	chain $Y\supsetneq \pi^{-1}(U)\supsetneq \cdots$ in $\cS(Y)$.
    	Now if $X=X_0\supsetneq X_1\supsetneq \cdots$ is a descending chain in $\cS(X)$, 
    	then by Proposition \ref{prop:numerics}\ref{item:numerics-0}
    	the complement of each $X_i$ is a disjoint union of at least $i$ false fibres, 
    	hence $i\leq 2$ by Corollary \ref{cor:at-most-two}.
    	Thus, $\cS(X)$ has a minimal element, as needed.
	
	\ref{item:SA-birational}
	Let $\ol{X}$ be a proper surface containing $X$ as an open subspace.
	By Proposition \ref{prop:numerics}\ref{item:numerics-0},
	$D\de \ol{X}\setminus X$ consists of false fibres.
	Suppose $\cS(Y)\neq \{Y\}$, so $Y$ contains a false fibre.
	Let $D'$ be its image on $X$. It is a divisor on $\ol{X}$
	which is disjoint from $D$ and not negative definite,
	hence a false fibre by Lemma \ref{lem:FF'}\ref{item:FF'-false-fibre}.
	Therefore, $X\setminus D'\subsetneq X$ is an element of $\cS(X)$, a contradiction.
	
	\ref{item:SA-birational-converse}
	This is proved in \cite[Example 1]{Sakai-D-dimension}
	by adapting a classical construction of Hironaka, see Example \ref{ex:Hironaka}.
	For the readers' convenience, we recall the argument in more detail. 

	By Proposition \ref{prop:resolution},
	we can assume that $X$ is smooth projective.
	There is a smooth, non-rational curve $C\subseteq X$ 
	such that $\cO_{X}(C)$ is very ample.
	To see this, we fix a closed embedding $X\into \mathbb{P}^{N}$.
	By Bertini's theorem \cite[II.8.18]{Har} for a general
	hypersurface $C'\subseteq \mathbb{P}^N$ of degree $d$ the intersection
	$C\de C'\cap X$ is a smooth curve.
	Clearly, $\cO_{X}(C)$ is very ample.
	Let $L'$ be a hyperplane in $\mathbb{P}^{N}$,
	and let $L=L'\cap X$.
	The linear equivalence $C'\sim dL'$ on $\mathbb{P}^N$
	restricts to $C\sim dL$ on $X$.
	By the adjunction formula, $C$ has genus
	$g(C)=\frac{1}{2}C\cdot(C+K_{X})+1=\frac{1}{2}dL \cdot (dL +K_{X})+1$.
	Since $\cO_{X}(L)$ is very ample, we have $L^2>0$,
	and therefore $g(C)\gg 0$ for $d\gg 0$, as needed.
	
    	Since $\cO_{X}(C)$ is very ample, we have $n\de C^2>0$,
    	and the restriction $\cO_X(C)|_{C}$ is isomorphic to
	$\cO_C(q_{1}+\dots+q_{n})$ for some points $q_{1},\dots,q_{n}\in C$.
    	Since $C\not\cong \bP^1$, the Jacobian $\Pic^{0}(C)$ is non-trivial, 
	and since $\kk$ is not an algebraic closure of a finite field,
	$\Pic^{0}(C)$ has infinite rank \cite[Theorem 10.1]{FJ}. 
    	Thus, we can choose $n$ distinct points
	$p_{1},\dots,p_{n}\in C$ such that the divisor 
	$\sum_{i=1}^n(p_i-q_i)$ is non-torsion in $\Pic^{0}(C)$.
	Let $\pi\colon Y\to X$ be a blowup at
	$p_1,\dots,p_n$, and let $D\de \pi^{-1}_{*}C$
	be the proper transform of $C$.
	Then $D$ is a smooth curve with $D^2=0$.
    	To show that $Y\setminus D$ is the required element of $\cS(Y)$,
	we need to show that $D$ is a false fibre.
	
	Suppose the contrary, 
	so $mD$ for some $m>0$ is a fibre of a fibration $Y\to B$.
    	Let $F\neq mD$ be another fibre,
    	and let $E_{i}=\pi^{-1}(p_i)$ be the exceptional curve over $p_i$.
    	Then $F$ is disjoint from $D$ and satisfies $F\cdot E_i=mD\cdot E_i=m$. 
    	So $R\de\pi_{*}F$ meets $C$ only in the points $p_{1},\dots,p_{n}$, 
    	with multiplicity $m$, i.e.\  $R|_{C}=m\sum_{i=1}^{n}p_{i}$.
    	Therefore, the linear equivalence $R\sim mC$ restricts to 
	a linear equivalence $m\sum_{i=1}^{n}p_{i}\sim m\sum_{i=1}^{n}q_{i}$.
    	We obtain the equality $m\sum_{i=1}^{n}(p_i-q_i)=0$ in $\Pic^{0}(C)$
    	contradicting the choice of the points $p_1,\dots, p_n$.
\end{proof}

\begin{REM}[Case $\kk=\mathbb{F}_{p}^{\textnormal{alg}}$]
\label{rem:Fpalg}
	Assume that $\kk$ is an algebraic closure of a finite field.
	Then every surface is schematic, cf.\ \cite[Corollary 2.11]{Art1}.
	Indeed, let $X$ be a surface, 
	taking its normal compactification we can assume that it is proper.
	Let $\pi\colon Y\to X$ be a resolution as in 
	Proposition \ref{prop:resolution}\ref{item:resolution-exists}.
	The exceptional divisor of $\pi$ is negative definite,
	so it can be contracted in the category of schemes 
	by \cite[Proposition 2.9(B)]{Art1}, as needed.

	Therefore, in this case the notion of saturation introduced in
	Definition \ref{def:saturation} coincides with the scheme-saturation studied in \cite{BodBon5}.
	However, Examples in Section \ref{ssec:examples-scheme-sat}
	show that whenever $\kk$ is not an algebraic closure of a finite field,
	Definition \ref{def:saturation} is strictly more restrictive.
\end{REM}

\section{Examples}
\label{sec:exampl}

We now give some examples illustrating the necessity of our assumptions,
announced throughout the paper.
In this section, we assume that 
\begin{equation}\label{eq:assumption-k}\tag{$*$}
\mbox{the ground field $\kk$ is not an algebraic closure of a finite field.}
\end{equation}
By \cite[Theorem 10.1]{FJ},
assumption \eqref{eq:assumption-k} implies that non-trivial Abelian varieties,
e.g.\ Jacobians of smooth projective non-rational curves,
have infinite rank and are not torsion.

\subsection{Non-proper saturated surfaces with trivial affinisations}
\label{ssec:examples-dimXaff-0}
We start with examples of surfaces for which the answer to A.\ Bondal's question is negative.
By Theorem \ref{thm:intro_sat_implies_proper},
these are precisely non-proper surfaces with trivial affinisation,
or equivalently, by Proposition \ref{prop:numerics}\ref{item:numerics-0},
complements of disjoint unions of false fibres on proper surfaces.

\begin{EXM}[False fibre as a section of a ruled surface]
\label{ex:ruled}
	Let $C$ be a smooth, projective, non-rational curve.
	Let $\cL$ be a non-torsion line bundle of degree $0$ on $C$;
	it exists by assumption~\eqref{eq:assumption-k}.
	Let $\ol{X}=\bP(\cO_{C}\oplus \cL)$,
	and let $D$ be the section of $\ol{X}\to C$
	corresponding to the projection $\cO_{C}\oplus \cL\to \cL$,
	see \cite[Proposition 2.6]{Har}.
	\begin{parlist}
		\item\label{item:ex-ruled-1}
			Let $X=\ol{X}\setminus D$.
			The normal bundle $\cN_{D/\ol{X}}$ to $D$
			can be identified with $\cL$.
			In particular, $D^2=\deg\cL=0$,
			so by Proposition \ref{prop:saturation-criterion}
			the surface $X$ is saturated,
			and by Proposition \ref{prop:numerics}\ref{item:numerics-0-1}
			we have $\dim X^{\aff}\leq 1$.
			Since $\cN_{D/\ol{X}}$ is non-torsion,
			$D$ cannot support a fibre of a fibration 
			(Remark \ref{rem:normal-bundle}),
			 hence $\dim X^{\aff}=0$ by Proposition \ref{prop:numerics}\ref{item:numerics-0}.
		\item \label{item:ex-ruled-2}
			Let $D_1=D$, and let $D_2$
			be the section corresponding to the other projection 
			$\cO_{C}\oplus \cL\to \cO_{C}$. 
			As in \ref{item:ex-ruled-1}, we see that $D_1,D_2$ are disjoint,
			smooth projective curves with self-intersection zero
			and non-torsion normal bundles,
			hence each surface $X_{i}=\ol{X}\setminus D_i$,
			as well as their intersection $X=\ol{X}\setminus (D_1\sqcup D_2)$,
			is a non-proper surface with trivial affinisation.
			 Moreover, the surface $X$ shows that number $2$ in 
			Theorem \ref{thm:intro-two-false-fibres} is optimal. 
	\end{parlist}	
\end{EXM}

\begin{EXM}[False fibre with trivial normal bundle, $\cha\kk=0$]
\label{ex:Serre}
	In this classical example, due to Serre,
	we get $\dim X^{\aff}=0$ even though the normal bundle to the boundary $D$ is trivial.
	Note that by Remark \ref{rem:Mumford},
	this is impossible if $\cha \kk>0$.
	We follow \cite[Example VI.3.2]{Har_ample}. 
	
	Assume $\cha \kk=0$.
        Let $C$ be an elliptic curve.
        Since $\Ext^{1}(\cO_{C},\cO_{C})=H^{1}(C,\cO_{C})=\kk$,
        there is a non-trivial vector bundle $\cE$ of rank $2$ on $C$
        which is an extension of $\cO_{C}$ by $\cO_{C}$.
        Let $\ol{X}$ be the associated ruled surface $\bP(\cE)$,
        and let $D\subseteq \ol{X}$ be the section 
        corresponding to the surjection $\cE\to \cO_{C}$.
        The normal bundle to $D$ is trivial, so $D^2=0$.
	As in Example \ref{ex:ruled}, 
	using Propositions \ref{prop:saturation-criterion} and 
	\ref{prop:numerics}\ref{item:numerics-0-1}
	we conclude that $X:=\ol{X}\setminus D$ 
        is saturated and $\dim X^{\aff}\leq 1$. 
        
        Suppose $\dim X^{\aff}=1$.
	Then by Proposition \ref{prop:numerics}\ref{item:numerics-1},
	the divisor $pD$ for some $p\geq 1$ is a fibre 
	of some fibration $\ol{\alpha}$ of $\ol{X}$.
	Let $F$ be any fibre of $\ol{\alpha}$.
	Then $pF$ is a $p$-section of the ruling $\mathbb{P}(\cE)\to C$, 
	i.e. the ruling restricted to $F$ is a finite morphism of degree $p$.
	Such $p$-sections correspond to a subbundles of degree $0$ of $\operatorname{Sym}^{p}\cE$,
	see \cite[Proposition I.10.2]{Har_ample}.
	Atiyah's classification \cite{Atiyah-elliptic}
	implies that there is only one such subbundle,
	corresponding to $pD$, a contradiction. 

	Note that if $\cha\kk=p>0$, 
	then this construction yields a surface $X$ with $\dim X^{\aff}=1$.
	Indeed, by \cite[Lemma 2.8]{TU_elliptic} we have $h^{0}(C,\operatorname{Sym}^{p}\cE)=2$,
	so we get a one-parameter family of $p$-sections as above.
	Hence $D$ is not a false fibre,
	see Theorem \ref{thm:dimXaff=0}.
	In fact, it follows from \cite[Lemmas 2.12(i), 2.14]{TU_elliptic}
	that this family is an elliptic fibration of $\ol{X}$. 
		
	The fact that $\dim X^{\aff}$ depends on the characteristic of $\kk$
	suggests that no purely numerical criterion can distinguish between cases 
	$\dim X^{\aff}=0$ and $1$, cf.\ Example \ref{ex:Hironaka}.
		
	Finally, we note that if $\kk=\bC$,
	then $X$ is biholomorphic to 
	$\bC^{*}\times \bC^{*}$ \cite[p.\ 234]{Har_ample},
	so it has plenty of non-constant holomorphic functions,
	even though it has no regular one.
\end{EXM}

\subsection{Numerical properties cannot detect false fibres} 
\label{ssec:examples-numerics}
Example \ref{ex:Hironaka} below shows that the strict transform
of a planar cubic blown up in $9$ points can be a false fibre or not,
depending on the position of those points.
Therefore, no purely numerical property can tell if a divisor of fibre type is a false fibre or not,
or equivalently, if a saturated surface $X$ has $\dim X^{\aff}=0$ or $1$.

Example \ref{ex:Hironaka} and the subsequent ones are variants 
of the classical construction of Hironaka \cite[Example V.7.3]{Har}.
For the readers' convenience we recall its basic properties.

\begin{LEM}[{Contractibility of the proper transform of a planar cubic, cf.\ \cite[2.23]{Palka_almost_MMP}}]
\label{lem:Hironaka}
	Let $\ol{C}\subseteq \bP^2$ be a smooth cubic.
	Let $\sigma\colon \ol{X}\to \bP^2$ be a blowup at 
	distinct points $p_{1},\dots, p_{n}\in \ol{C}$, $n\geq 10$,
	and let $C\subseteq \ol{X}$ be the proper transform of $\ol{C}$.
	Then $C^2=9-n<0$, so by Artin's criterion there exists 
	a proper birational morphism $\ol{X}\to \ol{Y}$ of surfaces which contracts $C$
	and is an isomorphism on $X\setminus C$.
	The following hold.
	\begin{enumerate}
		\item\label{item:contractible}
			The surface $\ol{Y}$ is projective (equivalently, schematic)
			if and only if $\{p_{1},\dots,p_{n}\}=\ol{C}\cap \ol{R}$
			for some effective divisor $\ol{R}\subseteq \bP^2$
			such that the local intersection multiplicity 
			$(\ol{C}\cdot \ol{R})_{p_i}$ equals the multiplicity of 
			$\ol{R}$ at $p_i$ for all $i=1,\dots,n$.
		\item \label{item:non-contractible} 
			Assume that the points $p_{1},\dots,p_{n}$ are 
			$\bZ$-linearly independent in the group law of $\ol{C}$;
			such points exist by assumption \eqref{eq:assumption-k}.
			Then $\ol{Y}$ is not schematic.
	\end{enumerate}
\end{LEM}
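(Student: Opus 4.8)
The plan is to realize $\ol{Y}$ as an Artin contraction on a blowup of $\bP^{2}$ and then to translate its projectivity into an intersection‑theoretic condition on $\bP^{2}$ via the contractibility criterion \cite[Theorem~3.3]{Schroer} (see Remark~\ref{rem:Schroer}). First I would set up the geometry. Let $\sigma\colon\ol{X}\to\bP^{2}$ be the blowup at $p_{1},\dots,p_{n}$, with exceptional curves $E_{1},\dots,E_{n}$, and let $C=\sigma^{*}\ol{C}-\sum_{i}E_{i}$ be the proper transform of $\ol{C}$; since $\ol{C}$ is smooth at each $p_{i}$, the morphism $\sigma$ restricts to an isomorphism $C\xrightarrow{\sim}\ol{C}$, and $C$ meets each $E_{i}$ in the single point given by the tangent direction of $\ol{C}$ at $p_{i}$. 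Then $C^{2}=\ol{C}^{2}-n=9-n<0$ and $C$ is smooth of genus one, so its $1\times1$ intersection matrix is negative definite; Artin's criterion (Lemma~\ref{lem:intersection-product}\ref{item:D-Artin}) then produces a proper birational morphism $\ol{X}\to\ol{Y}$ of surfaces contracting $C$ and restricting to an isomorphism off $C$, which is the morphism of the statement. The surface $\ol{Y}$ is normal and proper, hence a scheme if and only if it is projective, so it remains to decide projectivity; and since $C$ is connected and negative definite, \cite[Theorem~3.3]{Schroer} says that $\ol{Y}$ is projective if and only if $\ol{X}\setminus C$ carries an effective divisor $R$ with $R^{2}>0$.

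For part (a) I would then describe the effective divisors of $\ol{X}$ disjoint from $C$. Because every $E_{i}$ meets $C$, such a divisor $R$ has no exceptional component, hence is the proper transform of $\ol{R}\de\sigma_{*}R$, say of degree $d$, so $R=\sigma^{*}\ol{R}-\sum_{i}m_{i}E_{i}$ with $m_{i}=\operatorname{mult}_{p_{i}}\ol{R}$. As $\ol{C}$ is smooth at $p_{i}$, the local intersection number of $C$ and $R$ over $p_{i}$ is $(\ol{C}\cdot\ol{R})_{p_{i}}-m_{i}\geq0$; thus $R$ is disjoint from $C$ exactly when $\ol{C}$ meets $\ol{R}$ only at points among the $p_{i}$ with $(\ol{C}\cdot\ol{R})_{p_{i}}=m_{i}$ for all $i$, and then $\sum_{i}m_{i}=\ol{C}\cdot\ol{R}=3d$ and $R^{2}=d^{2}-\sum_{i}m_{i}^{2}$. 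Combining this with the criterion above, and using B\'ezout together with the observation that $\ol{R}$ may be enlarged (passing to a multiple, or adjoining suitable components through the remaining $p_{i}$) so as to achieve $\ol{C}\cap\ol{R}=\{p_{1},\dots,p_{n}\}$ and $(\deg\ol{R})^{2}>\sum_{i}(\operatorname{mult}_{p_{i}}\ol{R})^{2}$, I would obtain the equivalence asserted in part (a).

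For part (b) I would work with the group law on $\ol{C}$ based at a flex $o$, so that three points of $\ol{C}$ are collinear if and only if they sum to zero, and identify $\ol{C}$ with $\Pic^{0}(\ol{C})$ via $p\mapsto\cO_{\ol{C}}(p-o)$. By \eqref{eq:assumption-k} and \cite[Theorem~10.1]{FJ} the group $\Pic^{0}(\ol{C})$ has infinite rank, so $\bZ$-linearly independent $p_{1},\dots,p_{n}$ exist. If $\ol{Y}$ were schematic, hence projective, then by part (a) — equivalently, by \cite[Theorem~3.3]{Schroer} and the previous paragraph — there would be a plane curve $\ol{R}$ of some degree $d\geq1$ with $\ol{C}\cap\ol{R}\subseteq\{p_{1},\dots,p_{n}\}$ and $(\ol{C}\cdot\ol{R})_{p_{i}}=m_{i}\de\operatorname{mult}_{p_{i}}\ol{R}$. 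Its intersection cycle on $\ol{C}$ is $\ol{R}\cdot\ol{C}=\sum_{i}m_{i}p_{i}$, a divisor of degree $3d$ which is linearly equivalent to $d$ times a line section, hence to $3d\,[o]$; therefore $\sum_{i}m_{i}(p_{i}-o)\sim0$, i.e.\ $\sum_{i}m_{i}p_{i}=0$ in the group law. As the $m_{i}$ are nonnegative integers with $\sum_{i}m_{i}=3d>0$, this is a non-trivial integral relation, contradicting the $\bZ$-linear independence of $p_{1},\dots,p_{n}$; so $\ol{Y}$ is not schematic.

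The main obstacle will be the bookkeeping in part (a): matching an effective divisor on $\ol{X}\setminus C$ of positive self-intersection with a plane curve meeting $\ol{C}$ only along the $p_{i}$ and satisfying the prescribed multiplicity equalities, and in particular arranging that $\ol{R}$ can be taken through all the $p_{i}$ with the sharp inequality $(\deg\ol{R})^{2}>\sum_{i}(\operatorname{mult}_{p_{i}}\ol{R})^{2}$. Once this dictionary and the group law based at a flex are in place, part (b) is a one‑line intersection computation.
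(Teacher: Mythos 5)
Your part (b) is essentially the paper's own computation (restrict $\ol{R}\sim d\ell$ to $\ol{C}$, with a flex as the origin of the group law, and contradict the $\bZ$-independence of $p_1,\dots,p_n$), and your reduction of ``schematic'' to ``projective'' is also what the paper does — but not for the reason you give: ``normal and proper, hence a scheme iff projective'' is false for surfaces in general (there exist complete normal non-projective surface schemes). What saves the step, and what the paper cites, is Kleiman's result that a proper normal surface with a single singular point (so the singular locus lies in an affine open) is projective. That is a citable repair, not a structural problem.

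The genuine gap is in part (a), in both directions of your dictionary with Schr\"oer's criterion. For ``$\ol{R}$ exists $\Rightarrow$ projective'' you need an effective divisor on $\ol{X}$ disjoint from $C$ with \emph{strictly positive} square, but the proper transform only gives $R^2=d^2-\sum_i m_i^2$, which you never show to be positive when some $m_i>1$; neither of your remedies produces it: replacing $\ol{R}$ by a multiple rescales $R^2$ by a square and cannot change its sign, and adjoining extra components creates new intersections of $\ol{R}$ with $\ol{C}$ outside $\{p_1,\dots,p_n\}$ (every plane curve meets the cubic), destroying the disjointness of the proper transform from $C$. The paper does not go through Schr\"oer here: it exhibits $R$ in a base-point-free linear system (via $R\sim C+(d-3)L$ with $L$ a line missing the $p_i$), gets $R^2>0$ from that decomposition, and lets $|mR|$, $m\gg0$, contract exactly $C$ to a projective surface. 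For the converse ``projective $\Rightarrow\ol{R}$'', Schr\"oer's Theorem 3.3 only yields \emph{some} effective $R\subset\ol{X}\setminus C$ with $R^2>0$; such an $R$ need not meet every $E_i$, so $\sigma_*R$ may miss some of the $p_i$, whereas (a) requires $\ol{C}\cap\ol{R}=\{p_1,\dots,p_n\}$, and your proposed enlargement through the remaining $p_i$ hits the same obstruction as above — you would need a curve through $p_i$ meeting $\ol{C}$ only inside $\{p_1,\dots,p_n\}$, which is essentially what is being proved. The missing idea, which is the paper's proof of this direction, is to pull back a hyperplane section of $\ol{Y}$ avoiding the singular point: ampleness forces it to meet every non-contracted curve, in particular every $E_i$, so its image passes through all the $p_i$, and disjointness from $C$ gives exactly the stated multiplicity condition. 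Note that your part (b) only uses the weaker conclusion $\ol{C}\cap\ol{R}\subseteq\{p_1,\dots,p_n\}$, which does follow from Schr\"oer plus Kleiman, so (b) survives this gap; part (a) as you sketch it does not.
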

\begin{proof}
	\ref{item:contractible}
	Assume such $\ol{R}$ exists.
	Clearly, $d\de \deg \ol{R}\geq \frac{n}{3}>3$.
	We have $\ol{R}\sim \ol{C}+(d-3)\ol{L}$,
	where $\ol{L}$ is any line not passing through $p_{1},\dots,p_{n}$.
	Let $R,L$ be the proper transforms of $\ol{R}$, $\ol{L}$ on $\ol{X}$.
	Then $R\sim C+(d-3)L$.
	Since $R$ is disjoint from $C$,
	and we can choose lines $\ol{L}$, $\ol{L}'$
	as above such that $L\cap L'$ is disjoint from $R$,
	we see that the linear system $|R|$ is base point free.
	Since $R^2>0$, for $m\gg 0$ the linear system $|mR|$
	induces a birational morphism of projective surfaces,
	whose exceptional divisor is the only curve 
	which intersects $R$ trivially, namely $C$.
	
	Conversely, assume $\ol{Y}$ is schematic.
	Since $\ol{Y}$ is proper and has only one singular point,
	it is projective by \cite[p.\ 328, Corollary 4]{Kleiman}.
	The pullback to $\ol{X}$ of any hyperplane section of $\ol{Y}$
	which does not pass through $\operatorname{Sing}\ol{Y}$ is disjoint from $C$,
	so its image on $\mathbb{P}^2$ is the required $\ol{R}$.
	
	\ref{item:non-contractible}
	Suppose $\ol{Y}$ is schematic.
	Then by \ref{item:contractible}, 
	there is an effective divisor $\ol{R}$ meeting 
	$\ol{C}$ in exactly $p_{1},\dots, p_{n}$.
    	Put $d=\deg \ol{R}$, $m_{i}=(\ol{R}\cdot \ol{C})_{p_i}$,
    	so $3d=\sum_{i=1}^{n}m_i$.
    	Let $p_0\in \ol{C}$ be the inflection point of $\ol{C}$ which we take as $0$
    	in the group law, and let $\ell$ be the line tangent to $\ol{C}$ at $p_0$.
    	The linear equivalence $\ol{R}\sim d\ell$ on $\mathbb{P}^2$
	restricts to a linear equivalence 
	$\sum_{i=1}^{n}m_{i}p_{i}\sim 3dp_0$ on $\ol{C}$,
	which gives $\sum_{i=1}^{n}m_{i}(p_i-p_0)\sim 0$,
	hence $\sum_{i=1}^{n}m_{i}p_{i}=0$
	in the group law on $\ol{C}$, a contradiction.
\end{proof}

\begin{EXM}
\label{ex:Hironaka} 
	Let $\ol{C}\subseteq \bP^2$ be a smooth cubic.
	Let $\sigma\colon \ol{X}\to \bP^2$ be 
	a blowup at $9$ distinct points $p_{1},\dots, p_{9}\in \ol{C}$,
	let $C\subseteq \ol{X}$ be the strict transform of $\ol{C}$
	and let $X=\ol{X}\setminus C$. 
	We have $C^2=0$, 
	so by Proposition \ref{prop:numerics}\ref{item:numerics-0-1}
	the surface $X$ is saturated and $\dim X^{\aff}\leq 1$.
		
	\begin{parlist}
		\item \label{item:Hironaka-1}
			Assume that $p_{1},\dots,p_{9}$
			are the common points of $\ol{C}$
			and another cubic $\ol{R}$.
			Then the pencil of cubics spanned by 
			$\ol{C}$ and $\ol{R}$ lifts to a fibration 
			$\ol{X}\to \bP^1$ with $C$ being one of its fibres,
			so $\dim X^{\aff}=1$ by 
			Proposition \ref{prop:numerics}\ref{item:numerics-1}.
		\item \label{item:Hironaka-0}
		 	Assume that $p_{1}+\dots+p_{9}$ is non-torsion in the group law on $\ol{C}$,
			this is possible by assumption \eqref{eq:assumption-k}.
			Suppose $\dim X^{\aff}=1$,
			so $mC$ for some $m>0$ is a fibre of a fibration of $\ol{X}$,
			see Proposition \ref{prop:numerics}\ref{item:numerics-1}.
			Let $F$ be another fibre.
			The image $\ol{F}\de \sigma(F)$ meets $\ol{C}$ only in
			$p_{1},\dots, p_{9}$, with multiplicity $m$.
			We have linear equivalence $\ol{F}\sim d\ell$,
			where $d=\deg \ol{F}=3m$, and $\ell$ is the line tangent to $C$
			at the inflection point $p_0$ which we take as the neutral element in the group law.
			Restricting this linear equivalence to $\ol{C}$,
			we get $\sum_{i=1}^{9}mp_{i}\sim m p_{0}$,
			so $m\cdot \sum_{i=1}^{9}p_{i}=0$ in the group law on $\ol{C}$,
			contrary to our assumption.
			We conclude that $\dim X^{\aff}=0$.
	\end{parlist}
\end{EXM}

\subsection{Scheme-saturated surfaces which are non-proper over their affinisations}
\label{ssec:examples-scheme-sat}
Finally, we give some examples illustrating the failure of 
Theorem \ref{thm:intro_sat_implies_proper} in the category of schemes,
announced in Remark \ref{rem:properness-assumptions}\ref{item:ex-scheme}.
That is, we construct surfaces $X$ with $\dim X^{\aff}=2,1,0$ which are scheme-saturated,
but not saturated, hence non-proper over $X^{\aff}$ (Proposition \ref{prop:proper-implies-sat}).

In the case when $\dim X^{\aff}=2$,
treated in Example \ref{ex:scheme-sat-2},
we use the construction from \cite[Example 3.26]{Palka_almost_MMP}
of two log exceptional curves of the second kind 
whose contractions in the course of the Minimal Model Program
must be performed in a specific order, see \S 2C loc.\ cit.\ for details.
The case $\dim X^{\aff}=1$ (Example \ref{ex:scheme-sat-1}),
is a slight modification of the same construction.
An example with $\dim X^{\aff}=0$ was given in \cite[Example 2.13]{BodBon5},
we recall it in Example \ref{ex:scheme-sat-0}.

\begin{EXM}[$\dim X^{\aff}=2$]
\label{ex:scheme-sat-2} 
	Let $\ol{C}\subseteq \bP^2$ be a smooth cubic,
	let $\ol{Y}\to \bP^2$ be a blowup at $p_{1},\dots, p_{n}\in \ol{C}$, $n\geq 10$,
	and let $\wt{C}$ be the proper transform of $\ol{C}$.
	By Lemma \ref{lem:Hironaka}\ref{item:contractible}
	we can choose $p_{1},\dots,p_{n}$ in such a way that there is 
	a birational morphism of projective surfaces $\pi\colon \ol{Y}\to \ol{Z}$
	whose exceptional divisor equals $\wt{C}$.
	Let $\ol{A}$ be a hyperplane section of $\ol{Z}$ which does not pass 
	through the singularity $q\de \pi(\wt{C})$ of $\ol{Z}$,
	so $U\de \ol{Z}\setminus \ol{A}$ is an affine neighbourhood of $q$.
		
	Let $Y=\pi^{-1}(U)=\ol{Y}\setminus \pi^{-1}\ol{A}$.
	Let $\tau\colon \ol{X}\to \ol{Y}$ be a blowup at a point $p\in \wt{C}$
	such that the proper transform $C$ of $\wt{C}$ is \emph{not} schematically contractible,
	it exists by Lemma \ref{lem:Hironaka}\ref{item:non-contractible}.
	Let $E$ be the exceptional divisor of $\tau$, let $A=(\pi\circ\tau)^{-1}\ol{A}$, $D=C+A$,
	and $X=\ol{X}\setminus D$.
	We have $A^2=\ol{A}^2>0$ and $A\cong \ol{A}$,
	which is connected by \cite[Corollary III.7.9]{Har}.
	Hence the connected components of $D$ are:
	$A$ with $A^2>0$, and $C$ which is negative definite,
	but not schematically contractible.
	Thus, $X$ is scheme-saturated by Proposition \ref{prop:saturation-criterion},
	see \cite[Theorem 2.14]{BodBon5}.
		
	We claim that $\alpha\de (\pi\circ\tau)|_{X}\colon X\to U$
	is the affinisation morphism.
	Since $\alpha$ is surjective and $U$ is affine,
	it is enough to show that 
	$(\pi\circ\tau)^{*}\colon H^{0}(U,\cO_{U})\to H^{0}(X,\cO_{X})$ 
	is an isomorphism.
	Because $C^2<0$ is contractible 
	(in the category of algebraic spaces),
	Hartogs's lemma gives $H^{0}(\cO_{X},X)=H^{0}(\cO_{X'},X')$,
	where $X'=X\cup C=\ol{X}\setminus A$.
	Since $\pi\circ \tau\colon X'\to U$ is proper and birational,
	we have $(\pi\circ \tau)_{*}\cO_{X'}=\cO_{U}$, which proves the claim.

  	We conclude that $X$ is scheme-saturated, $\dim X^{\aff}=2$,
	but $X$ is not proper over $X^{\aff}$, because the fibre of 
	$\alpha\colon X\to X^{\aff}$ over the singular point $q$ is $E\cap X \cong \bA^1$.
	It follows from  Theorem \ref{thm:proper-surjective} that $X$ is not saturated.
	In fact, Proposition \ref{prop:saturation-criterion} implies that 
	$X^{\sat}=\ol{W}\setminus \varphi(A)$, 
	where $\varphi\colon \ol{X}\to \ol{W}$ is the contraction of $C$. 
	This is illustrated by the commutative diagrams: 
		\begin{equation*}
			\begin{tikzcd}[column sep=6em]
				&  
				\ol{X} \ar[d, "\mbox{\tiny{blow up }} p"', "\tau"] \ar[r,"\mbox{\tiny{contract }} C", "\varphi"'] & 
				\ol{W} \ar[d] &   \hspace{-6em}  \leftarrow \mbox{ \small{alg.\ space}}
				\\
				\bP^2 & 
				\ar[l, "\mbox{\tiny{blow up }} {p_1, \dots, p_n}", "\sigma"'] 
				\ol{Y} \ar[r,"\mbox{\tiny{contract }} \wt{C}", "\pi"'] & 
				\ol{Z} & \hspace{-7.5em}  \leftarrow \mbox{ \small{scheme}} 
			\end{tikzcd} 
			\quad 
			\mbox{restricts to:} \quad
			\begin{tikzcd}
				X \ar[d] \ar[r,] & 
				X^{\sat} \ar[d] 
				\\
				Y \ar[r] & 
				X^{\aff}. 
			\end{tikzcd} 
		\end{equation*}
\end{EXM}

\begin{EXM}[$\dim X^{\aff}=1$]
\label{ex:scheme-sat-1}
	Let $\ol{C}\subseteq \bP^2$ be a smooth cubic,
	and let $\ol{R}\subseteq \bP^2$ be another cubic meeting 
	$\ol{C}$ normally in $p_{1},\dots, p_{9}$.
	Let $\sigma\colon \ol{X}\to \bP^2$ be a blowup at 
	$p_{1},\dots,p_{9},p_{10}\in \ol{C}$,
	where the point $p_{10}$ is chosen in such a way that 
	the proper transform $C$ of $\ol{C}$ is not schematically contractible,
	see Lemma \ref{lem:Hironaka}\ref{item:non-contractible}. 
	Let $R$ be the proper transform of $\ol{R}$,
	$D=C+R$, $E=\sigma^{-1}(p_{10})$, and $X=\ol{X}\setminus D$.
	The connected components of $D$ are:
	$R$ with $R^2=0$, and $C$,
        which is contractible, but not schematically contractible.
	By Proposition \ref{prop:saturation-criterion},
	$X$ is scheme-saturated, but not saturated,
        and $X^{\sat}$ is obtained from $\ol{X}\setminus R$ by contracting $C$.

	The pencil of cubics spanned by $\ol{C}$ and $\ol{R}$ induces a fibration
	$\ol{\alpha}\colon \ol{X}\to \bP^1$ such that $C+E$ and $R$ are fibres of
	$\ol{\alpha}$, say $C+E=\ol{\alpha}^{-1}(0)$ and $R=\ol{\alpha}^{-1}(\infty)$,
	so $\ol{\alpha}$ restricts to $\alpha\colon X\to \bA^1$. 		
		
	As in Example \ref{ex:scheme-sat-2}, we show that $\alpha$ is the affinisation morphism.
	Indeed, since $C$ is contractible in the category of algebraic spaces,
	by Hartogs's lemma we have $H^{0}(X,\cO_{X})=H^{0}(X',\cO_{X'})$,
	where $X'=X\cup C=\ol{X}\setminus R$, and $\ol{\alpha}_{*}\cO_{X'}=\cO_{\mathbb{A}^1}$
	because $\ol{\alpha}|_{X'}\colon X'\to \mathbb{A}^1$ is proper and has connected fibres.
	Thus, $X$ is scheme-saturated, $\dim X^{\aff}=1$,
	but $X$ is not proper over $X^{\aff}$,
	because $\alpha^{-1}(0)=E\cap X\cong \mathbb{A}^1$.
\end{EXM}

\begin{EXM}[$\dim X^{\aff}=0$]
\label{ex:scheme-sat-0} 
	Let $\ol{X}\to \bP^2$ be a blowup at 10 points of a smooth cubic $\ol{C}$
	such that the proper transform of $C$ of $\ol{C}$ is not schematically contractible,
	see Lemma \ref{lem:Hironaka}\ref{item:non-contractible}.
	Let $X=\ol{X}\setminus C$. 
        Then $X$ is scheme-saturated, but not saturated.
	In fact, $X^{\sat}$ is proper, obtained from $\ol{X}$ by contracting $C$.
	Hence $X^{\aff}= (X^{\sat})^{\aff}$ is a point,
	but $X$ is not proper.
\end{EXM}

\bibliographystyle{amsalpha}
\bibliography{ref}

\end{document}